\documentclass{amsart}
\title{A model of second-order arithmetic satisfying AC but not DC}
\author{Sy-David Friedman}
\address[S.-D. Friedman] {Kurt \Godel\ Research Center, University of Vienna, W\"ahringer Strasse 25 A-1090 Vienna, Austria}
\email{sdf@logic.univie.ac.at}
\urladdr{http://www.logic.univie.ac.at/~sdf}
\thanks{The first author wishes to acknowledge the support of the Austrian Science Fund (FWF) through Research Project I 1238.}

\author{Victoria Gitman}
\address[V. Gitman]{The City University of New York, CUNY Graduate Center, Mathematics Program, 365 Fifth Avenue, New York, NY 10016}
\email{vgitman@nylogic.org}
\urladdr{https://victoriagitman.github.io}

\author{Vladimir Kanovei}
\address[V. Kanovei]{Institute for the Information Transmission Problems (IITP), 19, build.1, Bolshoy Karetny per. Moscow 127051, Russia and
Russian University of Transport (RUT-MIIT), Moscow, Russia}
\email{kanovei@googlemail.com}
\thanks{The third author wishes to acknowledge the support of the RFBR grant 17-01-00705.}

\usepackage{latexsym}
\usepackage{amssymb, latexsym}
\usepackage{amsmath}
\usepackage{mathrsfs}
\usepackage{amsxtra}
\usepackage{amsthm}
\usepackage{verbatim}
\newtheorem{theorem}{Theorem}[section]

\newtheorem{lemma}[theorem]{Lemma}

\newtheorem{proposition}[theorem]{Proposition}

\theoremstyle{definition}
\newtheorem{definition}[theorem]{Definition}
\newtheorem{question}[theorem]{Question}

\newcommand{\image}{\mathbin{\hbox{\tt\char'42}}}

\newcommand{\Union}{\bigcup}
\newcommand{\union}{\cup}

\newcommand{\lt}[1]{{\smalllt}#1}

\newcommand{\smallleq}{\mathrel{\mathchoice{\raise2pt\hbox{$\scriptstyle\leq$}}{\raise1pt\hbox{$\scriptstyle\leq$}}{\raise1pt\hbox{$\scriptscriptstyle\leq$}}{\scriptscriptstyle\leq}}}
\newcommand{\smalllt}{\mathrel{\mathchoice{\raise2pt\hbox{$\scriptstyle<$}}{\raise1pt\hbox{$\scriptstyle<$}}{\raise0pt\hbox{$\scriptscriptstyle<$}}{\scriptscriptstyle<}}}

\newcommand{\ZFC}{{\rm ZFC}}
\newcommand{\one}{\mathop{1\hskip-2.5pt {\rm l}}}
\newcommand{\p}{\mathbb{P}}
\newcommand{\q}{\mathbb{Q}}
\newcommand{\U}{\mathbb{U}}
\newcommand{\la}{\langle}
\newcommand{\ra}{\rangle}
\newcommand{\forces}{\Vdash}
\newcommand{\restrict}{\upharpoonright}
\newcommand{\ex}[1]{{}^{#1}}
\newcommand{\T}{\mathcal T}
\newcommand{\AC}{{\rm AC}}
\newcommand{\DC}{{\rm DC}}
\newcommand{\ZF}{{\rm ZF}}
\newcommand{\Godel}{G\"odel}
\newcommand{\sym}{\text{sym}}
\newcommand{\HS}{{\rm HS}}
\newcommand{\Add}{{\rm Add}}
\newcommand{\lev}{{\rm lev}}
\newcommand{\concat}{\mathbin{{}^\smallfrown}}
\newcommand{\Aut}{{\rm Aut}}
\newcommand{\dom}{\text{dom}}
\newcommand{\M}{\mathcal M}

\newcommand{\Z}{{\rm Z}}
\newcommand{\Levy}{L\'{e}vy}
\begin{document}
\maketitle
\begin{abstract}
We show that there is a $\beta$-model of second-order arithmetic in which the choice scheme holds, but the dependent choice scheme fails for a $\Pi^1_2$-assertion, confirming a conjecture of Stephen Simpson. We obtain as a corollary that the Reflection Principle, stating that every formula reflects to a transitive set, can fail in models of $\ZFC^-$. This work is a rediscovery by the first two authors of a result obtained by the third author in \cite{kanovei:ACnotDC}.
\end{abstract}
\section{Introduction}
Models of arithmetic are two-sorted structures, having two types of objects, which we think of as numbers and sets of numbers. Their properties are formalized using a two-sorted logic with separate variables and quantifiers for numbers and sets. By convention, we will denote number variables by lower-case letters and sets variables by upper-case letters. The language of second-order arithmetic is the language of first-order arithmetic $\mathcal L_A=\{+,\cdot,<,0,1\}$ together with a membership relation $\in$ between numbers and sets. A multitude of second-order arithmetic theories, as well as the relationships between them, have been extensively studied (see \cite{simpson:second-orderArithmetic}).

An example of a weak second-order arithmetic theory is ${\rm ACA_0}$, whose axioms consist of the modified Peano axioms, where instead of the induction scheme we have the single second-order induction axiom $$\forall X [(0\in X\wedge \forall n(n\in X\rightarrow n+1\in X))\rightarrow \forall n (n\in X)],$$ and the \emph{comprehension scheme} for first-order formulas. The latter is a scheme of assertions stating for every first-order formula, possibly with set parameters, that there is a set whose elements are exactly the numbers satisfying the formula. One of the strongest second-order arithmetic theories is $\Z_2$, often referred to as \emph{full second-order arithmetic}, which strengthens comprehension for first-order formulas in ${\rm ACA}_0$ to full comprehension for all second-order assertions. This means that for a formula with any number of second-order quantifiers, there is a set whose elements are exactly the numbers satisfying the formula. For example, the reals of any model of $\ZF$ give a model of $\Z_2$. We can further strengthen the theory $\Z_2$ by adding choice principles for sets: the choice scheme and the dependent choice scheme.

The \emph{choice scheme} is a scheme of assertions, which states for every second-order formula $\varphi(n,X,A)$ with a set parameter $A$ that if for every number $n$, there is a set $X$ witnessing $\varphi(n,X,A)$, then there is a single set $Y$ collecting witnesses for every $n$, in the sense that $\varphi(n,Y_n,A)$ holds, where $Y_n=\{m\mid \la n,m\ra\in Y\}$ and $\la n,m\ra$ is any standard coding of pairs. More precisely, an instance of the choice scheme for the formula $\varphi(n,X,A)$ is $$\forall n\exists X\,\varphi(n,X,A)\rightarrow \exists Y\forall n\,\varphi(n,Y_n,A).$$ We will denote by $\Sigma^1_n$-$\AC$ the fragment of the choice scheme for $\Sigma^1_n$-assertions, making an analogous definition for $\Pi^1_n$, and we will denote the full choice scheme by $\Sigma^1_\infty$-$\AC$. The reals of any model of $\ZF+\AC_\omega$ (countable choice) satisfy $\Z_2+\Sigma^1_\infty$-$\AC$. It is a folklore result, going back possibly to Mostowski, that the theory $\Z_2+\Sigma^1_\infty$-$\AC$ is bi-interpretable with the theory $\ZFC^-$ ($\ZFC$ without the powerset axiom, with Collection instead of Replacement) together with the statement that every set is countable.

The \emph{dependent choice scheme} is a scheme of assertions, which states for every second-order formula $\varphi(X,Y,A)$ with set parameter $A$ that if for every set $X$, there is a set $Y$ witnessing $\varphi(X,Y,A)$, then there is a single set $Z$ making infinitely many dependent choices according to $\varphi$. More precisely, an instance of the dependent choice scheme for the formula $\varphi(X,Y,A)$ is $$\forall X\exists Y\,\varphi(X,Y,A)\rightarrow \exists Z\forall n\,\varphi(Z_n,Z_{n+1},A).$$ We will denote by $\Sigma^1_n$-$\DC$ the dependent choice scheme for $\Sigma^1_n$-assertions, with an analogous definition for $\Pi^1_n$, and we will denote the full dependent choice scheme by $\Sigma^1_\infty$-$\DC$. The reals of a model of $\ZF+\DC$ (dependent choice) satisfy $\Z_2+\Sigma^1_\infty$-$\DC$.

It is not difficult to see that the theory $\Z_2$ implies $\Sigma^1_2$-$\AC$, the choice scheme for $\Sigma^1_2$-assertions. Models of $\Z_2$ can build their own version of \Godel's constructible universe $L$. If a model of $\Z_2$ believes that a set $\Gamma$ is a well-order, then it has a set coding a set-theoretic structure constructed like $L$ along the well-order $\Gamma$.  It turns out that models of $\Z_2$ satisfy a version of Shoenfield's absoluteness with respect to their constructible universes. For every $\Sigma^1_2$-assertion $\varphi$, a model of $\Z_2$ satisfies $\varphi$ if and only its constructible universe satisfies $\varphi$ with set quantifiers naturally interpreted as ranging over the reals. All of the above generalizes to constructible universes $L[A]$ relativized to a set parameter $A$.\footnote{Indeed, the much weaker theory ${\rm ATR}_0$ suffices for everything we have said so far. See \cite{simpson:second-orderArithmetic} (Section VII.3 and VII.4) for details.} Thus, given a $\Sigma^1_2$-assertion $\varphi(n,X,A)$ for which the model satisfies $\forall n\exists X\,\varphi(n,X,A)$, the model can go to its constructible universe $L[A]$ to pick the least witness $X$ for $\varphi(n,X,A)$ for every $n$, because $L[A]$ agrees when $\varphi$ is satisfied, and then put the witnesses together into a single set using comprehension. So long as the unique witnessing set can be obtained for each $n$, comprehension suffices to obtain a single set of witnesses. How much more of the choice scheme follows from $\Z_2$? The reals of the classical Feferman-\Levy\ model of $\ZF$ (see \cite{levy:choicescheme}, Theorem 8), in which $\aleph_1$ is a countable union of countable sets, is a $\beta$-model of $\Z_2$ in which $\Pi^1_2$-$\AC$ fails. This is a particulary strong failure of the choice scheme because, as we explain below, $\beta$-models are meant to strongly resemble the full standard model given by $P(\omega)$.

There are two ways in which a model of second-order arithmetic can resemble the full standard model given by $P(\omega)$. A model of second-order arithmetic is called an $\omega$-\emph{model} if its first-order part is $\omega$, and it follows that its second-order part is some subset of $P(\omega)$. But even an $\omega$-model can poorly resemble $P(\omega)$ because it may be wrong about well-foundedness by missing $\omega$-sequences. An $\omega$-model of second-order arithmetic which is correct about well-foundedness is called a $\beta$-\emph{model}. The reals of any transitive $\ZF$-model is a $\beta$-model of $\Z_2$. One advantage to having a $\beta$-model of $\Z_2$ is that the constructible universe it builds internally is isomorphic to an initial segment $L_\alpha$ of the actual constructible universe $L$.


The theory $\Z_2$ also implies $\Sigma^1_2$-$\DC$ (see \cite{simpson:second-orderArithmetic}, Theorem VII.9.2), the dependent choice scheme for $\Sigma^1_2$-assertions.  In this article, we construct a symmetric submodel of a forcing extension of $L$ whose reals form a model of second-order arithmetic in which $\Z_2$ together with $\Sigma^1_\infty$-$\AC$ holds, but $\Pi^1_2$-$\DC$ fails. The forcing notion we use is a tree iteration of Jensen's forcing for adding a unique generic real.

Jensen's forcing, which we will call here $\p^J$, introduced by Jensen in \cite{jensen:real}, is a subposet of Sacks forcing constructed in $L$ using the $\diamondsuit$ principle. The poset $\p^J$ has the ccc and adds a unique generic real over $L$. The collection of all $L$-generic reals for $\p^J$ in any model is $\Pi^1_2$-definable. Jensen used his forcing to show that it is consistent with $\ZFC$ that there is a $\Pi^1_2$-definable non-constructible real singleton \cite{jensen:real}. Recently Lyubetsky and the third author extended the ``uniqueness of generic filters" property of Jensen's forcing to finite-support products of $\p^J$ \cite{kanovei:productOfJensenReals}. They showed that in a forcing extension $L[G]$ by the $\omega$-length finite support-product of $\p^J$, the only $L$-generic reals for $\p^J$ are the slices of the generic filter $G$. The result easily extends to $\omega_1$-length finite support-products as well.

We in turn extend the ``uniqueness of generic filters" property to tree iterations of Jensen's forcing. We first define finite iterations $\p^J_n$ of Jensen's forcing $\p^J$, and then define an iteration of $\p^J$ along a tree $\T$ to be a forcing whose conditions are functions from a finite subtree of $\T$ into $\Union_{n<\omega}\p_n^J$ such that nodes on level $n$ get mapped to elements of the $n$-length iteration $\p_n^J$ and conditions on higher nodes extend conditions on lower nodes. The functions are ordered by extension of domain and strengthening on each coordinate. We show that in a forcing extension $L[G]$ by the tree iteration of $\p^J$ along the tree $\ex{\lt\omega}\omega_1$ (or the tree $\ex{\lt\omega}\omega$) the only $L$-generic filters for $\p_n^J$ are the restrictions of $G$ to level $n$ nodes of tree. We proceed to construct a symmetric submodel of $L[G]$ which has the tree of $\p_n^J$-generic filters added by $G$ but no branch through it. The symmetric model we construct satisfies $\AC_\omega$ and the tree of $\p_n^J$-generic filters is $\Pi^1_2$-definable in it. The reals of this model thus provide the desired $\beta$-model of $\Z_2$ in which $\Sigma^1_\infty$-$\AC$ holds, but $\Pi^1_2$-$\DC$ fails.
\begin{theorem}\label{th:main1}
There is a $\beta$-model of second-order arithmetic $\Z_2$ together with $\Sigma^1_\infty$-$\AC$ in which $\Pi^1_2$-$\DC$ fails.
\end{theorem}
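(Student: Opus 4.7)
The plan is to build a symmetric submodel $\M$ of a generic extension $L[G]$ whose reals yield the desired model. I first introduce finite iterations $\p^J_n$ of Jensen's forcing $\p^J$ and a tree iteration $\p^J_\T$ along $\T = \ex{\lt\omega}\omega_1$: its conditions are functions defined on finite subtrees of $\T$ assigning to each level-$n$ node a condition in $\p^J_n$, in such a way that for $s \subseteq t$ in the domain the value at $t$ extends the value at $s$ in the iteration order. Let $G$ be $L$-generic for $\p^J_\T$; then for each $s\in\T$ of level $n$, restricting $G$ to the branch through $s$ produces an $L$-generic filter $g_s$ for $\p^J_n$, and the $g_s$'s fit together into a copy of $\T$ inside $L[G]$.

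The main technical step, extending the finite-support product result of \cite{kanovei:productOfJensenReals} to this tree iteration, is the uniqueness statement: in $L[G]$ the only $L$-generic filters for $\p^J_n$ are the filters $g_s$ for $s\in\T$ of level $n$. I expect the proof to follow the template of the earlier result, using $\diamondsuit$ in $L$ to seal off any potential $L$-generic filter not among the canonical ones by anticipating it with a dense set built into the construction of $\p^J$, combined with a combinatorial analysis of how names in the tree iteration depend on finitely many coordinates. As a consequence, the predicate ``$f$ is an $L$-generic filter for $\p^J_n$'' is $\Pi^1_2$-definable uniformly in $n$ in any outer model of $L$, so the tree of canonical generics admits a uniform $\Pi^1_2$-definition inside $\M$.

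For the symmetric submodel I will use the group of automorphisms of $\p^J_\T$ induced by automorphisms of $\T$ that fix the root and permute the $\omega_1$ successors of each node, together with the normal filter generated by the subgroups fixing all coordinates below a fixed finite subtree. A standard symmetric extension argument then yields $\M\models\ZF+\AC_\omega$; the sequence $\la g_s\mid s\in\T\ra$ is represented by a hereditarily symmetric name and so lies in $\M$. A routine symmetry argument shows $\M$ contains no branch through this tree: a hereditarily symmetric name for an alleged branch has its support inside some fixed finite subtree, but at any level deeper than that subtree an automorphism of $\T$ fixing the support can be chosen to move the branch node to a different successor, contradiction.

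To finish, since $\M$ is a transitive class its reals form a $\beta$-model of $\Z_2$, and $\AC_\omega$ in $\M$ translates to $\Sigma^1_\infty$-$\AC$ on the reals by the bi-interpretation of $\Z_2+\Sigma^1_\infty$-$\AC$ with $\ZFC^-$ plus ``every set is countable'' recalled in the introduction. For the failure of $\Pi^1_2$-$\DC$, using the $\Pi^1_2$-definition of ``being an $L$-generic filter for $\p^J_n$'' I can write a $\Pi^1_2$ formula $\varphi(X,Y)$ asserting, when $X$ is a canonical generic of some level $n$, that $Y$ is an $L$-generic for $\p^J_{n+1}$ restricting to $X$, and vacuously witnessed otherwise. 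Then $\forall X \exists Y\,\varphi(X,Y)$ holds in $\M$ because every tree node has successors, yet any $Z$ witnessing the DC conclusion would eventually encode a branch through the tree of canonical generics, contradicting its absence. The principal obstacle is the tree-iteration uniqueness lemma, which requires revisiting the $\diamondsuit$-based construction of $\p^J$ to handle arbitrary finite subtrees of $\T$ rather than just finite products of coordinates.
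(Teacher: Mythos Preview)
Your outline matches the paper's strategy in broad strokes---tree iteration of Jensen's forcing along $\ex{\lt\omega}\omega_1$, uniqueness of generics via the $\diamondsuit$-based construction, a symmetric submodel containing the tree of generics but no branch, and $\Pi^1_2$-definability of the tree. However, there is a genuine gap in your choice of normal filter.

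You generate the filter from subgroups fixing \emph{finite} subtrees of $\ex{\lt\omega}\omega_1$. The paper instead uses subgroups fixing \emph{useful} trees: countable subtrees with no infinite branch. This distinction is exactly what makes $\AC_\omega$ go through, and your appeal to a ``standard symmetric extension argument'' does not apply. The classical Jensen model you are implicitly invoking uses countably closed forcing, so countable sequences of ground-model sets are already in the ground model; here the tree iteration has the ccc and is very far from countably closed. The paper's $\AC_\omega$ proof is not routine: given a countable family with hereditarily symmetric name supported on a useful tree $S$, one must build choice names $\dot C_n$ by mixing over countable maximal antichains (this is where ccc enters), and each mixing step introduces countably many new support trees. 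The union of all these supports is a countable tree, but one must ensure it acquires no infinite branch; the paper does this by repeatedly applying automorphisms in $H_S$ to move new nodes sideways so that no branch extending beyond $S$ ever grows more than once. With finite supports this entire mechanism collapses: the union of the countably many finite supports arising in the construction is an infinite tree, hence not in your filter, and the combined choice name fails to be symmetric.

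Your no-branch argument and the $\Pi^1_2$ analysis are fine (and indeed slightly easier with finite supports), but without $\AC_\omega$ you do not get $\Sigma^1_\infty$-$\AC$ in the reals. Replace finite supports by countable well-founded supports and supply the ccc-based mixing argument for $\AC_\omega$; that is where the real work beyond the uniqueness lemma lies.
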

\noindent It should be noted that in our model the instance of $\Pi^1_2$-$\DC$ failure is parameter-free.

Theorem \ref{th:main1} has a long and complicated history. In 1973, Simpson submitted an abstract claiming a proof of the result \cite{simpson:ACnotDC}, but he didn't follow it up with a publication because, as he kindly shared with the first author, he never worked out the details of the argument, which involved Jensen's forcing, but did not have all the parts needed to solve the problem. In 1979, the third author published, in Russian, a technical report with the result \cite{kanovei:ACnotDC}. Finally, after the third author's (joint with Lyubetsky) recent publications on uniqueness properties of Jensen's forcing, the first and second author independently rediscovered the third author's result, leading to this joint publication.

Our results also answer a long-standing open question of Zarach from \cite{Zarach1996:ReplacmentDoesNotImplyCollection} about whether the Reflection Principle holds in models of $\ZFC^-$. The \emph{Reflection Principle} states that every formula can be reflected to a transitive set, and holds in $\ZFC$ by the \Levy-Montague reflection because every formula is reflected by some $V_\alpha$. In the absence of the von Neumann hierarchy, it is not clear how to realize reflection, and indeed we show that it fails in $H_{\omega_1}\models\ZFC^-$ of the symmetric model we construct.

\begin{theorem}\label{th:main2}
The theory $\ZFC^-$ does not imply the Reflection Principle.
\end{theorem}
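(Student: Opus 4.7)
The plan is to derive Theorem~\ref{th:main2} from Theorem~\ref{th:main1} via the folklore bi-interpretation of $\Z_2+\Sigma^1_\infty$-$\AC$ with $\ZFC^-$ together with ``every set is countable'', already recalled in the introduction. Let $\M$ denote the symmetric submodel of a forcing extension of $L$ constructed for Theorem~\ref{th:main1}, so that the reals of $\M$ form a $\beta$-model of $\Z_2+\Sigma^1_\infty$-$\AC$ in which a parameter-free instance of $\Pi^1_2$-$\DC$ fails. First I would observe that $H_{\omega_1}^\M\models\ZFC^-$: under the bi-interpretation, the set-theoretic structure read off from the reals of $\M$ is precisely $H_{\omega_1}^\M$, and $\Sigma^1_\infty$-$\AC$ on the arithmetic side translates into full Collection together with $\AC$ on the set-theoretic side.

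Next I would verify that $\ZFC^-$ together with the Reflection Principle implies the full dependent choice scheme. Given $\varphi(x,y,a)$ with $\forall x\,\exists y\,\varphi(x,y,a)$, apply Reflection to obtain a transitive set $M\ni a$ to which $\varphi$ and $\forall x\,\exists y\,\varphi(x,y,a)$ both reflect; hence $M\models\forall x\,\exists y\,\varphi(x,y,a)$. Since $M$ is a set, use $\AC$ to well-order it by some $<_M$, and define $f\colon M\to M$ by letting $f(x)$ be the $<_M$-least $y\in M$ with $\varphi(x,y,a)$. Recursion along $\omega$ then produces a sequence $\la x_n\mid n<\omega\ra$ with $x_0=a$ and $x_{n+1}=f(x_n)$, a set by Collection, and reflection ensures that each $\varphi(x_n,x_{n+1},a)$ holds in $V$, witnessing the desired instance of dependent choice.

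Finally, I would argue that the parameter-free failure of $\Pi^1_2$-$\DC$ in the reals of $\M$ translates under the bi-interpretation into a parameter-free failure of the dependent choice scheme for a corresponding low-complexity set-theoretic formula in $H_{\omega_1}^\M$. Combined with the previous step, this forces the Reflection Principle to fail in $H_{\omega_1}^\M$, yielding Theorem~\ref{th:main2}.

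The main obstacle is making the last translation explicit: I must identify the precise set-theoretic formula corresponding to the failing parameter-free $\Pi^1_2$-instance and check that the resulting instance of $\DC$ over sets indeed fails in $H_{\omega_1}^\M$. This is routine because the bi-interpretation respects complexity at the $\Pi^1_2/\Pi_2$ level over hereditarily countable sets, but some bookkeeping is needed to convert a dependent choice principle over reals into one over sets while preserving the absence of parameters.
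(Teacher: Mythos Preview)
Your proposal is correct and follows essentially the same route as the paper: take $H_{\omega_1}^N$ for the symmetric model $N$, show that Reflection implies the $\DC$-scheme over $\ZFC^-$ (your second paragraph is exactly the paper's Lemma~\ref{lem:RSequivDCScheme}), and deduce failure of Reflection from failure of the $\DC$-scheme.

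The one place you make your life harder than necessary is the ``obstacle'' you flag at the end. The paper avoids the bi-interpretation bookkeeping entirely: rather than translating the arithmetic $\Pi^1_2$-$\DC$ failure through the interpretation, it simply observes that the tree $\T$ of generic reals is itself definable in $H_{\omega_1}^N$ by the very same formula used in the proof of Theorem~\ref{th:SOArithmeticModelACnegDC} (the reals of $N$ are elements of $H_{\omega_1}^N$, and ``$\la r_0,\ldots,r_{n-1}\ra$ is $L$-generic for $\p_n^J$'' is a first-order statement about $H_{\omega_1}^N$). Since $\T$ has no branch in $N$, the $\DC$-scheme fails directly in $H_{\omega_1}^N$, and no complexity-preservation or conversion argument is needed. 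Your route via bi-interpretation works, but the paper's shortcut dissolves precisely the bookkeeping you were worried about.
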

\section{Jensen's forcing}\label{sec:PerfectPosets}
Jensen's poset $\p^J$, introduced in \cite{jensen:real}, is a subposet of Sacks forcing in $L$ with the countable chain condition and the property that it adds a unique generic real. The poset $\p^J$ is constructed in $L$ as the union of a continuous chain $\la \p_\alpha\mid\alpha<\omega_1\ra$ of length $\omega_1$ of countable perfect posets. At successor stages $\alpha+1$, the principle $\diamondsuit$ is used to seal a certain countable collection of maximal antichains of $\p_\alpha$, so that by the end of the construction every maximal antichain of $\p^J$ is sealed and therefore countable.

Recall that a tree $T\subseteq \ex{\lt\omega}2$ is \emph{perfect} if every node in $T$ has a splitting node above it. Given two perfect trees $T$ and $S$, let's define their \emph{meet} $T\wedge S$ as follows. Let $U_0=T\cap S$, and, assuming $U_\alpha$ has been defined, let $U_{\alpha+1}$ be the set of all nodes in $U_\alpha$ which have a splitting node in $U_\alpha$ above them. At limit stages, take intersections. Since $U_0$ is countable, there must be a countable stage $\alpha$ in the construction for which $U_\alpha=U_{\alpha+1}$. We define $T\wedge S=U_\alpha$. It is not difficult to see that either $T\wedge S=\emptyset$ or $T\wedge S$ is a perfect tree. In the latter case, it is the maximal perfect tree contained in $T\cap S$, so that every perfect tree $U\subseteq T\cap S$ is contained in $T\wedge S$. Recall the standard terminology that if $T\subseteq \ex{\lt\omega}2$ is a tree and $s$ is a node in $T$, then $T_s$ denotes the subtree of $T$ consisting of all nodes compatible with $s$. The following proposition is straightforward to check.
\begin{proposition} If $T,S$, and $R$ are perfect trees, then
\begin{enumerate}
\item $(T\wedge S)_s=T_s\wedge S_s$ for every node $s\in T\wedge S$,
\item $(T\wedge S)\wedge R=T\wedge (S\wedge R)$ ($\wedge$ is associative),
\item $(T\union S)\wedge R=(T\wedge R)\union (S\wedge R)$ ($\wedge$ distributes over $\union$).
\end{enumerate}
\end{proposition}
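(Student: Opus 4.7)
My plan rests on the maximality characterization already noted in the paragraph defining $\wedge$: when nonempty, $T\wedge S$ is the maximal perfect subtree of $T\cap S$, so a node $s$ belongs to $T\wedge S$ if and only if some perfect subtree of $T\cap S$ contains $s$. I would use this criterion throughout; the whole proof is formal manipulation of this fact, together with one appeal to Cantor--Bendixson for (3).

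For (1), I would verify the two inclusions separately. First, $(T\wedge S)_s$ is a perfect tree (every node of $(T\wedge S)_s$ has a splitting node above it that is still compatible with $s$, using perfectness of $T\wedge S$ and the fact that $s\in T\wedge S$), and it is contained in $T_s\cap S_s$, so by maximality $(T\wedge S)_s\of T_s\wedge S_s$. Conversely, $T_s\wedge S_s$ is a perfect subtree of $T\cap S$ every node of which is compatible with $s$, hence $T_s\wedge S_s\of(T\wedge S)_s$. For (2), I would check that both $(T\wedge S)\wedge R$ and $T\wedge(S\wedge R)$ coincide with the maximal perfect subtree of $T\cap S\cap R$: any perfect $P\of T\cap S\cap R$ lies inside $T\wedge S$ by maximality, hence inside $(T\wedge S)\cap R$, and then, being perfect, inside $(T\wedge S)\wedge R$. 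The symmetric reasoning works on the other side, yielding equality.

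The substantive item is (3). The inclusion $(T\wedge R)\union(S\wedge R)\of(T\union S)\wedge R$ is immediate, since $T\wedge R$ is a perfect subtree of $(T\union S)\cap R$, and similarly for $S\wedge R$. For the reverse inclusion, let $s\in U:=(T\union S)\wedge R$. By (1), $U_s$ is a perfect tree contained in both $T\union S$ and $R$. The key observation is that every branch of $U_s$ lies entirely in $[T]$ or entirely in $[S]$: since $T$ and $S$ are each closed under initial segments, the set of levels at which a given branch of $U_s$ stays in $T$ is downward closed in $\omega$, hence either finite (so the branch is eventually, and therefore entirely, in $S$) or all of $\omega$ (so the branch is in $T$). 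Consequently $[U_s]=[U_s\cap T]\union[U_s\cap S]$. Since $[U_s]$ is uncountable, at least one of these two closed sets is uncountable, say $[U_s\cap T]$, and Cantor--Bendixson then supplies a perfect subtree $P\of U_s\cap T\of T\cap R$. Every branch of $P$ is a branch of $U_s$ and so has $s$ as an initial segment, whence $s\in P$; this exhibits $s$ inside a perfect subtree of $T\cap R$, so $s\in T\wedge R$. The symmetric case places $s$ in $S\wedge R$.

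The only step requiring more than formal play with maximality is the Cantor--Bendixson invocation in (3), and even that is mild; I do not anticipate any serious obstacle, consistent with the author's remark that the proposition is straightforward to check.
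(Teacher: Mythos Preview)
The paper does not prove this proposition; it simply asserts that it is ``straightforward to check.'' Your argument is correct and fills in the omitted details. Parts (1) and (2) are indeed routine applications of the maximality characterization, exactly as you present them. For (3), your route through branches and Cantor--Bendixson is clean and correct: the key point that every branch of $U_s$ lies entirely in $T$ or entirely in $S$ (via downward closure of $\{n:b\restrict n\in T\}$) is the heart of the matter, and once $[U_s]=[U_s\cap T]\cup[U_s\cap S]$ is established, extracting a perfect subtree through $s$ from the uncountable side is immediate. This is perhaps slightly more machinery than the word ``straightforward'' suggests, but a purely inductive argument along the stages $U_\alpha$ of the definition of $\wedge$ does not go through directly (a splitting node in $(T\cap R)\cup(S\cap R)$ need not split within $T\cap R$ or within $S\cap R$ separately), so some appeal to branches seems unavoidable.
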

Let's recall now some facts about perfect posets, which are subposets of Sacks forcing closed under certain basic operations.
\begin{definition}
We say that a collection $\p$ of perfect trees ordered by inclusion is a \emph{perfect poset}\footnote{Note that this is not a standard definition. In the literature, a \emph{perfect poset} is usually defined to be a collection $\p$ of perfect trees such that $\ex{\lt\omega}2\in\p$ and whenever $T\in\p$ and $s\in T$, then $T_s\in\p$.} if
\begin{enumerate}
\item $(\ex{\lt\omega}2)_s\in\p$ for every $s\in \ex{\lt\omega}2$, and\\
for every $T,S\in \p$,
\item $T\union S\in \p$ (closed under unions),
\item $T\wedge S\in \p$, if $T\wedge S\neq\emptyset$ (closed under meets).
\end{enumerate}
\end{definition}
\noindent The smallest perfect poset, which we will denote by $\p^{\text{min}}$, is the closure under finite unions of the collection $\{(\ex{\lt\omega}2)_s\mid s\in\ex{\lt\omega}2\}$. Note that two perfect trees $T$ and $S$ are compatible under the inclusion ordering precisely when $T\wedge S\neq\emptyset$. So if $\p\subseteq \q$ are perfect posets, then $T,S\in\p$ are compatible in $\q$ if and only if they are already compatible in $\p$. Standard arguments show that a generic filter $G$ for a perfect poset $\p$ is determined by the \emph{generic real} $r$ that  is a branch through $T\in\p$ if and only if $T\in G$.

Given a perfect poset $\p$, we will denote by $\p^{\lt\omega}$ the $\omega$-length finite-support product of $\p$. Conditions in $\p^{\lt\omega}$ are functions $p:\omega\to \p$ such that for all but finitely many $n$, $p(n)=\ex{\lt\omega}2$. We will sometimes abuse notation by writing $p=\la T_0,\ldots,T_{n-1}\ra$ for conditions in $\p^{\lt\omega}$, meaning that all remaining coordinates are trivial.

Following \cite{abraham:jensenRealsIterations}, we associate to a perfect poset $\p$, the poset $\q(\p)$ whose conditions are pairs $(T,n)$ with $T\in\p$ and $n\in\omega$ ordered so that $(T_2,n_2)\leq (T_1,n_1)$ whenever $T_2\subseteq T_1$, $n_2\geq n_1$, and $T_1\cap \ex{n_1}2=T_2\cap \ex{n_1}2$. We will refer to $\q(\p)$ as the \emph{fusion poset} for $\p$ because fusion arguments involving $\p$ amount to producing a filter for $\q(\p)$ meeting sufficiently many dense sets. If $G\subseteq \q(\p)$ is $V$-generic, then the union of $T\cap \ex{n}2$ for $(T,n)\in G$ is the generic perfect tree $\T$ added by $\q(\p)$. Note that $\T\leq T$ for every $T$ that appears in some condition in $G$.

Let $\q(\p)^{\lt\omega}$ denote the $\omega$-length finite-support product of the $\q(\p)$. Conditions in $\q(\p)^{\lt\omega}$ are function $q:\omega\to \q(\p)$ such that for all but finitely many $n$, $q(n)=(\ex{\lt\omega}2,0)$, but we will sometimes abuse notation by writing the conditions as finite tuples.

Suppose that $G\subseteq \q(\p)^{\lt\omega}$ is $V$-generic. Let $\la \T_n\mid n<\omega\ra\in V[G]$ be the $\omega$-length sequence of generic perfect trees derived from $G$ and let $\dot{\T}_n$ be the canonical $\q(\p)^{\lt\omega}$-names for the trees $\T_n$. In $V[G]$, let $$\U=\{\T_n\wedge S\mid S\in\p,\,n\in\omega,\,\T_n\wedge S\neq\emptyset\}.$$
We will denote by $\p^*$ the collection of perfect trees that is the closure under finite unions of $\p$ and $\U$ and argue that $\p^*$ is a perfect poset extending $\p$. This will be the case even when $G$ is only generic for a countable transitive model $M$ containing $\p$. At successor stages $\alpha+1$ in the construction of Jensen's poset $\p^J$, if $\diamondsuit$ codes a certain countable transitive model $M$ with $\p_\alpha\in M$, we will extend $\p_\alpha$ to $\p_{\alpha+1}=\p_\alpha^*$ constructed in a generic extension $M[G]$ by $\q(\p_\alpha)^{\lt\omega}$.

Suppose that $M$ is a countable transitive model of $\ZFC^-+``\mathcal P(\omega)$ exists" and $\p\in M$ is a perfect poset. Clearly we have $\q(\p)\in M$. We will argue that if $G\subseteq\q(\p)^{\lt\omega}$ is $M$-generic, then the poset $\p^*$ constructed in $M[G]$ is a perfect poset with the property that every maximal antichain of $\p$ that is an element of $M$ remains maximal in $\p^*$. First, we need the following easy proposition.

\begin{proposition}\label{prop:subtreeContainedInS}
Suppose that $\p$ is a perfect poset and a condition $q\in\q(\p)^{\lt\omega}$ forces that $\dot{\T}_n\wedge \check S\neq\emptyset$ with $S\in \p$. Then there is a condition $\bar q\leq q$ with $\bar q(n)=(\bar T,m)$ and a node $s\in \bar T\cap \ex{m}2$ for which $(\bar T)_s\leq S$.
\end{proposition}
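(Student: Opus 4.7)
The plan is to take $q(n) = (T,k)$ and do a direct tree surgery on $T$, without invoking any deep forcing argument. The first step is to observe that $T \wedge S \ne \emptyset$ already in $\p$. Indeed, since $q$ fixes the value of $\dot\T_n$ at level $k$ as $T \cap \ex{k}2$ and forces $\dot\T_n \subseteq T$, every condition below $q$ forces that $\dot\T_n$ is a perfect subtree of $T$; and for any perfect $T' \subseteq T$ one has $T' \wedge S \subseteq T \wedge S$ (any perfect subtree of $T' \cap S$ is a perfect subtree of $T \cap S$). So if $T \wedge S = \emptyset$, then $q$ would force $\dot\T_n \wedge \check S = \emptyset$, contrary to hypothesis.

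Since $T \wedge S$ is a nonempty perfect tree, pick any node $s \in T \wedge S$ of some level $m \geq k$, and let $s_0 = s \restriction k$, which lies in $T \cap \ex{k}2$. I would now surgically replace the subtree of $T$ above $s_0$ by $(T \wedge S)_{s_0}$, leaving the other level-$k$ subtrees intact, by setting
$$\bar T \;=\; (T \wedge S)_{s_0} \;\cup\; \bigcup_{\substack{t \in T \cap \ex{k}2 \\ t \neq s_0}} T_t.$$
Each $T_t = T \wedge (\ex{\lt\omega}2)_t$ is in $\p$ by closure under meets (and item (1) of the definition of a perfect poset), and similarly $(T \wedge S)_{s_0} = (T \wedge S) \wedge (\ex{\lt\omega}2)_{s_0}$ lies in $\p$; then $\bar T \in \p$ follows from closure under finite unions.

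The verifications are routine: $\bar T$ is perfect as a finite union of perfect trees, $\bar T \subseteq T$ since each summand is, and $\bar T \cap \ex{k}2 = T \cap \ex{k}2$ because each summand $T_t$ contributes exactly $\{t\}$ at level $k$ while $(T \wedge S)_{s_0}$ contributes $\{s_0\}$. Moreover $s \in \bar T$ because $s \in (T \wedge S)_{s_0}$, and the other summands contain no node compatible with $s$, so $(\bar T)_s = (T \wedge S)_s \subseteq T \wedge S \subseteq S$. Define $\bar q$ to agree with $q$ except at coordinate $n$, where $\bar q(n) = (\bar T, m)$; then $\bar q \leq q$ in $\q(\p)^{\lt\omega}$ and $\bar q$ witnesses the conclusion. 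The only step that required any thought was the preliminary observation that $T \wedge S \neq \emptyset$; everything after that is tree bookkeeping using the closure properties of $\p$.
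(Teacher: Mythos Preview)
Your proof is correct and follows essentially the same approach as the paper's: observe that $T\wedge S\neq\emptyset$ and graft a piece of $T\wedge S$ into $T$ above one level-$k$ node; the paper simply takes $m=k$ (which you may as well do, since the perfect tree $T\wedge S$ has nodes at every level), so there is no need for the separate $s_0$. One small inaccuracy: the other summands $T_t$ \emph{do} contain nodes compatible with $s$, namely the common prefixes of $s_0$ and $t$ at levels below $k$; but these are prefixes of $s\in T\wedge S$, hence already lie in $(T\wedge S)_s$, so your conclusion $(\bar T)_s=(T\wedge S)_s\subseteq S$ is unaffected.
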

\begin{proof}
Let $q(n)=(T,m)$. Since $q\forces \dot{\T_n}\wedge \check S\neq\emptyset$, there must be some $s\in T\cap \ex{m}2$ such that $U=T_s\wedge S\neq\emptyset$. Let $\bar T$ be the perfect tree we get by replacing $T_s$ with $U$ in $T$ and let $\bar q\leq q$ be such that $\bar q(n)=(\bar T,m)$ and $\bar q(i)=q(i)$ for all $i\neq n$. Note that the closure under unions property of perfect posets is needed to conclude that $\bar T\in\p$.
\end{proof}

\begin{proposition}\label{prop:propertiesOfP*}
Suppose $M$ is a countable transitive model of $\ZFC^-+``\mathcal P(\omega)$ exists" and $\p\in M$ is a perfect poset. If $G\subseteq \q(\p)^{\lt\omega}$ is $M$-generic and $\p^*$ is constructed in $M[G]$ as above, then:
\begin{enumerate}
\item $\p^*$ is a perfect poset.
\item $\U$ is dense in $\p^*$ and $\{\T_n\mid n<\omega\}$ is a maximal antichain of $\p^*$.
\item Every maximal antichain of $\p$ from $M$ remains maximal in $\p^*$.
\end{enumerate}
\end{proposition}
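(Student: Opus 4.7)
The plan is to verify the three clauses of Proposition~\ref{prop:propertiesOfP*} in order, treating the first two as consequences of the closure properties defining $\p^*$ and reserving the real density work for clause~(3).

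For~(1), axioms (1) and (2) of the perfect poset definition are immediate, since $\p^*$ contains $\p$ (which already contains every $(\ex{\lt\omega}2)_s$) and is closed under finite unions by construction. For closure under meets, given $T,S\in\p^*$, I would write each as a finite union of elements from $\p\cup\U$ and invoke the associativity of $\wedge$ and distributivity of $\wedge$ over $\union$ stated earlier to reduce $T\wedge S$ to a finite union of pairwise meets. Three cases remain: a $\p$--$\p$ meet stays in $\p$; a $\p$--$\U$ meet rewrites as $R\wedge(\T_n\wedge S')=\T_n\wedge(R\wedge S')$, which is empty or again in $\U$; and a $\U$--$\U$ meet $(\T_n\wedge S')\wedge(\T_m\wedge S'')$ with $n=m$ collapses similarly, while the case $n\ne m$ rests on the key sublemma $\T_n\wedge\T_m=\emptyset$. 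I would establish this sublemma by a density argument in $\q(\p)^{\lt\omega}$: given $q$ with $q(n)=(T_n,k_n)$ and $q(m)=(T_m,k_m)$, pick mutually distinct branches of $T_n$ above each level-$k_n$ node of $T_n$ and of $T_m$ above each level-$k_m$ node of $T_m$ (only finitely many are needed, and every perfect tree has continuum-many branches), and refine the condition to a decision level $l$ beyond all divergence points of these branches, so that the refined slices at level $l$ are disjoint. Such an extension forces $\dot\T_n\cap\ex{l}2$ and $\dot\T_m\cap\ex{l}2$ disjoint, which collapses $\dot\T_n\wedge\dot\T_m$ to $\emptyset$ since a nonempty perfect tree has a node at every level.

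For~(2), density of $\U$ in $\p^*$ follows once we show that every $T\in\p\cap M$ has some $\T_n\subseteq T$, since an arbitrary element of $\p^*$ is a finite union from $\p\cup\U$: if a summand lies in $\U$ we are done, and otherwise the union is in $\p$ by union closure. The density argument is direct: for any $q\in\q(\p)^{\lt\omega}$ pick a coordinate $n$ on which $q$ is trivial and strengthen to $q'(n)=(T,0)$. Maximality of $\{\T_n\mid n<\omega\}$ as an antichain is then immediate: pairwise incompatibility is the sublemma from~(1), and density of $\U$ gives each $T\in\p^*$ some $\T_n\wedge S$ below it, hence compatibility with $\T_n$.

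For~(3), let $A\in M$ be a maximal antichain of $\p$. By density of $\U$ it suffices to show each $\T_n\wedge S\in\U$ is compatible in $\p^*$ with some $S_i\in A$, which I would prove by showing the set of conditions forcing ``$\dot\T_n\wedge\check S\wedge\check S_i\ne\emptyset$ for some $S_i\in A$'' is dense below any $q$ forcing $\dot\T_n\wedge\check S\ne\emptyset$. Applying Proposition~\ref{prop:subtreeContainedInS}, first extract $\bar q\le q$ with $\bar q(n)=(\bar T,m)$ and $s\in\bar T\cap\ex{m}2$ such that $(\bar T)_s\subseteq S$. Since $(\bar T)_s\in\p$ and $A$ is maximal in $\p$, some $S_i\in A$ satisfies $(\bar T)_s\wedge S_i\ne\emptyset$; in particular $s\in S_i$, since otherwise no extension of $s$ would lie in $S_i$ and the meet would be empty. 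Surgically replace $(\bar T)_s$ inside $\bar T$ by the perfect tree $(\bar T)_s\wedge S_i$, gluing on the untouched pieces $\bar T_t$ for the other $t\in\bar T\cap\ex{m}2$, to produce $\tilde T\in\p$ with $\tilde T\cap\ex{m}2=\bar T\cap\ex{m}2$ and $\tilde T_s\subseteq S\cap S_i$; replacing $\bar q(n)$ by $(\tilde T,m)$ yields the desired $\bar q'\le\bar q$ forcing $\dot\T_n\wedge\check S\wedge\check S_i\supseteq\tilde T_s\ne\emptyset$. The main obstacle is this surgery inside $\q(\p)^{\lt\omega}$: one must modify the tree component above the chosen node without disturbing the already-decided lower levels and verify the modified tree is still in $\p$, both of which rely essentially on the union- and meet-closure built into the definition of a perfect poset.
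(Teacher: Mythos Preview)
Your proof is essentially the paper's own argument, organized in a slightly different order (the paper proves (2) before (1)) and with a cosmetically different density argument for $\T_n\wedge\T_m=\emptyset$ (you thin to single branch-restrictions at a common level, the paper picks disjoint perfect subtrees above each shared node; both work and both rely on the same closure properties of $\p$).

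There is one small slip at the end of~(3): the condition you build does \emph{not} force $\dot\T_n\wedge\check S\wedge\check S_i\supseteq\tilde T_s$, since $\dot\T_n$ is only forced to be a perfect \emph{subtree} of $\tilde T$, not to contain all of $\tilde T_s$. What the condition does force is $s\in\dot\T_n$ (because level $m$ is frozen) together with $\dot\T_n\subseteq\tilde T$, so that $(\dot\T_n)_s$ is a nonempty perfect tree contained in $\tilde T_s\subseteq S\cap S_i$; hence $\dot\T_n\wedge\check S\wedge\check S_i\supseteq(\dot\T_n)_s\neq\emptyset$. This is exactly how the paper concludes, and with that correction your argument is complete.
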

\begin{proof}
First, let's prove (2). We argue that every $T\in\p$ has some generic tree $\T_n$ below it. Fix $T\in\p$. Let $q\in\q(\p)^{\lt\omega}$. Since $q$ has finite support, we can choose some $n$ such that $q(n)=(\ex{\lt\omega}2,0)$ and strengthen $q$ to a condition $\bar q$ such that $q(m)=\bar q(m)$ for all $m\neq n$ and $\bar q(n)=(T,0)$. Clearly $\bar q\forces \dot{\T}_n\leq T$. So by density, there must be some such $\bar q\in G$.

Next, let's show that for $i\neq j$, $\T_i\wedge \T_j=\emptyset$, and so the generic trees $\T_n$ form an antichain. Fix any condition $p\in\q(\p)^{\lt\omega}$ with $p(n)=(T_n,m_n)$. By strengthening further, we can assume that $m_i=m_j=m$. For each node $s$ that is on level $m$ of both $T_i$ and $T_j$, we can choose disjoint perfect trees $U^{s,i}\subseteq (T_i)_s$ and $U^{s,j}\subseteq (T_j)_s$. Let $S_i$ be the perfect tree which we get by replacing, for each $s$ on level $m$ of both $T_i$ and $T_j$, $(T_i)_s$ with $U^{s,i}$ in $T_i$, and let $S_j$ be obtained similarly. Now let $\bar p\leq p$ be the condition where on coordinate $i$, we put $S_i$  instead of $T_i$ and on coordinate $j$, we put $S_j$ instead of $T_j$. Clearly $\bar p$ forces that $\T_i\wedge\T_j=\emptyset$. It remains to show that every tree $T$ in $\p^*$ is compatible with some $\T_n$. If $T$ is a finite union of trees one of which is in $U$, then this is clear, and otherwise $T\in\p$, and we just argued that then some $\T_n\leq T$.

To prove (1), it suffices to show that $\p^*$ is closed under meets. So suppose that $R=\Union_{i<n} R_i$ and $R'=\Union_{j<m} R'_j$, where $R_i,R'_j\in\p\union \U$, and consider $R\wedge R'=\Union_{i<n,j<m}R_i\wedge R'_j$. We will argue that each $R_i\wedge R'_j$ is either empty or in $\p\union \U$. If both $R_i$ and $R'_j$ are in $\p$, then the conclusion follows since $\p$ is a perfect poset. If one of the $R_i$ or $R'_j$ is $\T_k\wedge S$ with $S\in\p$ and the other is $S'\in\p$, then $R_i\wedge R'_j=\T_k\wedge (S\wedge S')$ and the conclusion follows. Finally if $R_i=\T_k\wedge S$ and $R'_j=\T_{k'}\wedge S'$ with $S,S'\in\p$, then $R_i\wedge R'_j=(\T_k\wedge \T_{k'})\wedge (S\wedge S')$. If $k=k'$, then the conclusion follows and if $k\neq k'$, we just argued above that $\T_k\wedge\T_{k'}=\emptyset$.

Let's now prove (3). Fix a maximal antichain $\mathcal A\in M$ of $\p$. It suffices to show that every tree in $\U$ is compatible with some element of $\mathcal A$. So suppose that $\T_n\wedge S\in \U$ with $S\in\p$, and fix a condition $q\in G$ forcing that $\dot{\T}_n\wedge \check S\neq \emptyset$. We will argue that there is a a condition $q'\leq q$ with $q'(n)=(T',m)$ such that there is a node $s\in T'\cap \ex{m}2$ and $A\in\mathcal A$ for which $T'_s\leq S,A$. By Proposition~\ref{prop:subtreeContainedInS}, there is $\bar q\leq q$ with $\bar q(n)=(\bar T,m)$ such that there is $s\in \ex{m}2$ with $\bar T_s\leq S$. Since $\mathcal A$ is maximal in $\p$, there is $A\in\mathcal A$ and $U\in\p$ such that $U\leq \bar T_s,A$. Let $T'$ be the tree we get by replacing $\bar T_s$ with $U$ in $\bar T$ and let $q'\leq q$ be the condition where $q'(k)=\bar q(k)$ for all $k\neq n$ and $q'(n)=(T',m)$. By density, there is some such condition $q'\in G$. But this means that $(\T_n)_s\leq \T_n\wedge S$ and $(\T_n)_s\leq A$, so that $\T_n\wedge S$ is compatible with $A\in\mathcal A$.
\end{proof}
We let $\U^{\lt\omega}$ denote the collection of elements $p\in\p^{*\lt\omega}$ such that for all $n$, $p(n)\in \U$ or $p(n)=\ex{\lt\omega}2$. Note that it follows from Proposition~\ref{prop:propertiesOfP*} that  $\U^{\lt\omega}$ is dense in $\p^{*\lt\omega}$.

\begin{proposition}\label{prop:propertiesOfProductP*}
Suppose that $M$ is a countable transitive model of $\ZFC^-+``\mathcal P(\omega)$ exists" and $\p\in M$ is a perfect poset. If $G\subseteq \q(\p)^{\lt\omega}$ is $M$-generic and $\p^*$ is constructed in $M[G]$ as above, then every maximal antichain of $\p^{\lt\omega}$ from $M$ remains maximal in $\p^{*\lt\omega}$.
\end{proposition}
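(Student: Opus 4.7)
The argument generalizes Proposition~\ref{prop:propertiesOfP*}(3) from a single to several coordinates. Fix a maximal antichain $\mathcal A \in M$ of $\p^{\lt\omega}$. Since $\U^{\lt\omega}$ is dense in $\p^{*\lt\omega}$, it suffices to show that every $p \in \U^{\lt\omega}$ is compatible with some element of $\mathcal A$. Write the nontrivial coordinates of $p$ as $p(j) = \T_{n_j} \wedge S_j$ for $j < k$, where $S_j \in \p$ and the indices $n_j$ need not be distinct.

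Fix a condition $q \in G$ forcing $\dot{\T}_{n_j} \wedge \check S_j \neq \emptyset$ for every $j < k$. Mimicking the one-coordinate template, the plan is to show density below $q$ of the set $D$ of conditions $q'$ for which there exist $A = \la A_j\ra \in \mathcal A$ and, for each $j < k$, a node $s_j \in T'_{n_j} \cap \ex{m_{n_j}}2$ (writing $q'(n_j) = (T'_{n_j}, m_{n_j})$) with $(T'_{n_j})_{s_j} \subseteq S_j \cap A_j$. Granted density, pick $q' \in D \cap G$ with associated witnesses $A$ and $s_j$. Then $q'$ forces $s_j \in \T_{n_j}$ and $(\T_{n_j})_{s_j} \subseteq (T'_{n_j})_{s_j} \subseteq S_j \cap A_j$. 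Since $\T_{n_j}$ is perfect and $s_j \in \T_{n_j}$, the subtree $(\T_{n_j})_{s_j}$ is itself a perfect tree, nonempty and contained in both $p(j) = \T_{n_j} \wedge S_j$ and $A_j$. Hence $p(j) \wedge A_j \neq \emptyset$ in $\p^*$ for every $j$, and coordinates outside the common support of $p$ and $A$ are trivially compatible, so $p$ and $A$ are compatible in $\p^{*\lt\omega}$.

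To verify density, start from any $q_0 \leq q$. Iterating Proposition~\ref{prop:subtreeContainedInS} over $j = 0, \ldots, k-1$ produces $\bar q \leq q_0$ with $\bar q(n) = (\bar T_n, m_n)$ for each index $n$ appearing among the $n_j$, together with nodes $s_j \in \bar T_{n_j} \cap \ex{m_{n_j}}2$ satisfying $(\bar T_{n_j})_{s_j} \leq S_j$. When $n_j = n_{j'}$ with $j \neq j'$ we also need $s_j \neq s_{j'}$; this is arranged by raising $m_n$ past a splitting node of $\bar T_n$ lying above any coinciding witnesses and redirecting to distinct children, using that a perfect subtree of $(\bar T_n)_s$ is still $\leq S_j$ whenever $(\bar T_n)_s$ is. Setting $r = \la (\bar T_{n_j})_{s_j}\ra_{j < k}$, a condition in $\p^{\lt\omega}$ from $M$, and invoking maximality of $\mathcal A$ gives $A \in \mathcal A$ and $U = \la U_j\ra \in \p^{\lt\omega}$ with $U_j \leq (\bar T_{n_j})_{s_j}$ and $U_j \leq A_j$ for each $j$. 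For each $n$, form $\bar T'_n$ by simultaneously replacing in $\bar T_n$, for every $j$ with $n_j = n$, the subtree $(\bar T_n)_{s_j}$ by $U_j$. Distinctness of the $s_j$ at the common level $m_n$ makes these replacements pairwise independent, and closure of $\p$ under unions (together with $(\bar T_n)_t \in \p$ for $t \in \bar T_n$, which follows from closure under meets since $(\bar T_n)_t = \bar T_n \wedge (\ex{\lt\omega}2)_t$) yields $\bar T'_n \in \p$. Setting $q'(n) = (\bar T'_n, m_n)$ on the relevant coordinates and $q'(i) = \bar q(i)$ elsewhere gives $q' \in D$ with $q' \leq q_0$, as needed.

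The main obstacle is producing the distinct witness nodes $s_j$ at a common level in a single tree $\bar T_n$ when several coordinates of $p$ share a generic index $n$; apart from this additional bookkeeping, the argument is a direct lift of Proposition~\ref{prop:propertiesOfP*}(3).
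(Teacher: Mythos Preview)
Your proof is correct and follows essentially the same approach as the paper's: reduce to conditions in $\U^{\lt\omega}$, find $\bar q\leq q$ with distinct witness nodes $s_j$ at the appropriate levels (handling the case of repeated generic indices $n_j$ by going to a higher level), use maximality of $\mathcal A$ in $M$ to find a compatible $A$, and splice the resulting $U_j$ back into $\bar T_n$ to obtain a condition in $G$ by density. Your write-up is somewhat more explicit than the paper's---you isolate the dense set $D$ and verify density below an arbitrary $q_0\leq q$, and you spell out why the replacement trees $\bar T'_n$ lie in $\p$---but the underlying argument is identical.
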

\begin{proof}
Fix a maximal antichain $\mathcal A\in M$ of $\p^{\lt\omega}$. It suffices to show that every condition $p\in\U^{\lt\omega}$ is compatible with some element of $\mathcal A$. Let $$p=\la \T_{i_0}\wedge S_0,\T_{i_1}\wedge S_1,\ldots,\T_{i_m}\wedge S_m\ra.$$ Note that the values $i_n$ do not have to be distinct and we need to carefully address this possibility. Fix a condition $q\in G$ forcing that $\dot \T_{i_n}\wedge \check S_n\neq \emptyset$ for every $n\leq m$. Repeatedly using the construction in the proof of Proposition~\ref{prop:subtreeContainedInS} and going to a large enough level, we can find a condition $\bar q\leq q$ with $\bar q(n)=(\bar T_n,k_n)$ such that for every $n\leq m$, there is a node $s_n$ on level $k_{i_n}$ of $\bar T_{i_n}$ such that $(\bar T_{i_n})_{s_n}\leq S_n$, and for $n\neq n'$, $s_n\neq s_{n'}$. Let $\bar p=\la (\bar T_{i_0})_{s_0}, (\bar T_{i_1})_{s_1},\ldots,(\bar T_{i_m})_{s_m}\ra$. Since $\mathcal A$ is maximal in $\p^{\lt\omega}$, the condition $\bar p$ is compatible with some $a\in\mathcal A$, and so for every $n\leq m$, we can let $U_n=\bar p(n)\wedge a(n)\in \p$. Now, for every $i\in \{i_n\mid n\leq m\}$, let $R_i$ be the tree we get by replacing $(\bar T_i)_{s_n}$ with $U_n$ in $\bar T_i$ whenever $i_n=i$. Let $r$ be the condition such that $r(i)=(R_i,k_i)$ for $i\in \{i_n\mid n\leq m\}$ and $r(i)=\bar q(i)$ otherwise. By density, there is some such $r\in G$. But this means that $(\T_{i_n})_{s_n}\leq S_n\wedge a(n)$, so that $p$ is compatible with $a$.
\end{proof}

All our constructions will take place inside the constructible universe $L$. For reasons that will become obvious during the course of the constructions, we will restrict ourselves to countable models of $\ZFC^-+``\mathcal P(\omega)$ exists" which happen to be initial segments $L_\alpha$ of $L$. So let's call countable $L_\alpha$ satisfying $\ZFC^-+``\mathcal P(\omega)$ exists" \emph{suitable} models. Relevant examples of suitable models for us will be transitive collapses of countable $M\prec L_{\omega_2}$.

We are now ready to review the construction of Jensen's perfect poset $\p^J$, which will use the $\diamondsuit$-principle to anticipate and seal maximal antichains. So let's start by fixing a canonically defined $\diamondsuit$-sequence $\la S_\alpha\mid\alpha<\omega_1\ra$. Note that if $M$ is a suitable model and $\delta=\omega_1^M$, then $\la S_\alpha\mid \alpha<\delta\ra$ is an element of $M$.

Jensen's poset $\p^J$ will be the union of the following increasing sequence $\la \p_\alpha\mid\alpha<\omega_1\ra$ of perfect posets. Let $\p_0=\p^{\text{min}}$. At limit stages, we will take unions. Suppose $\p_\alpha$ has been defined. We let $\p_{\alpha+1}=\p_\alpha$, unless the following happens. Suppose $S_\alpha$ codes a well-founded and extensional binary relation $E\subseteq \alpha\times\alpha$ such that the collapse of $E$ is a suitable model $M_\alpha$ with $\p_\alpha\in M_\alpha$ and $\alpha=\omega_1^{M_\alpha}$. In this case, we take the $L$-least $M_\alpha$-generic filter $G\subseteq \q(\p_\alpha)^{\lt\omega}$ and let $\p_{\alpha+1}=\p^*_\alpha$ as constructed in $M_\alpha[G]$.

As we observed in Proposition~\ref{prop:propertiesOfP*}, $\p_{\alpha+1}$ is a perfect poset with the property that every maximal antichain of $\p_\alpha$ in $M_\alpha$ remains maximal in $\p_{\alpha+1}$. Also, by Proposition~\ref{prop:propertiesOfProductP*}, every maximal antichain of $\p_\alpha^{\lt\omega}$ in $M_\alpha$ remains maximal in $\p_{\alpha+1}^{\lt\omega}$. Now let's argue that every maximal antichain of $\p_\alpha^{\lt\omega}$ in $M_\alpha$ remains maximal in the final poset $\p^{J\lt\omega}$. It suffices to argue that the models $M_\alpha$ form an increasing sequence, which follows because if $\beta>\alpha$, then $\beta=\omega_1^{M_\beta}$, and therefore $M_\beta$ has $S_\alpha$ as an element and can collapse it to obtain $M_\alpha$. This shows that every maximal antichain of $\p_\alpha^{\lt\omega}$ that is an element of $M_\alpha$ is sealed.
\begin{theorem}\label{th:JensenForcingCCC}
The finite-support product $\p^{J\lt\omega}$ of Jensen's poset $\p^J$ has the ccc.
\end{theorem}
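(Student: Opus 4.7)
The plan is to reduce the ccc of $\p^{J\lt\omega}$ to the sealing property already established in the excerpt, namely that every maximal antichain of $\p_\alpha^{\lt\omega}$ that lies in $M_\alpha$ remains maximal in $\p^{J\lt\omega}$. Given an arbitrary maximal antichain $\mathcal A$ of $\p^{J\lt\omega}$, I would locate a countable $\alpha<\omega_1$ such that (a) $\mathcal{A}\cap \p_\alpha^{\lt\omega}$ is already a maximal antichain of $\p_\alpha^{\lt\omega}$, and (b) $S_\alpha$ codes a suitable model $M_\alpha$ with $\p_\alpha\in M_\alpha$, $\alpha=\omega_1^{M_\alpha}$, and $\mathcal{A}\cap\p_\alpha^{\lt\omega}\in M_\alpha$. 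Once such an $\alpha$ is in hand, the sealing argument of the excerpt guarantees that $\mathcal{A}\cap\p_\alpha^{\lt\omega}$ stays maximal in $\p^{J\lt\omega}$, forcing $\mathcal{A}=\mathcal{A}\cap\p_\alpha^{\lt\omega}\subseteq M_\alpha$ and hence countable.

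To find $\alpha$ I would combine a reflection argument with the $\diamondsuit$-sequence. The set of $\alpha<\omega_1$ of the form $N\cap\omega_1$ for some countable $N\prec L_{\omega_2}$ containing $\mathcal{A}$ and the canonical sequence $\la \p_\beta\mid\beta<\omega_1\ra$ is a club in $\omega_1$. For any such $\alpha$, the Mostowski collapse of $N$ is a suitable $L_\eta$ with $\omega_1^{L_\eta}=\alpha$ that contains $\p_\alpha$ and $\mathcal{A}\cap\p_\alpha^{\lt\omega}$, and by elementarity $\mathcal{A}\cap\p_\alpha^{\lt\omega}$ is maximal in $\p_\alpha^{\lt\omega}$, yielding (a). To secure (b) I exploit that the canonical $\diamondsuit$-sequence $\la S_\alpha\mid\alpha<\omega_1\ra$ guesses every subset of $\omega_1$ stationarily often; applying this to a fixed code for the binary relation on $\omega_1$ whose $\alpha$-initial segments collapse to the required $L_\eta$'s, we obtain a stationary set of $\alpha$ at which $S_\alpha$ codes exactly the right $M_\alpha$. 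Intersecting this stationary set with the club gives the desired $\alpha$.

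The main obstacle is aligning the reflection with the $\diamondsuit$-guess, so that the suitable model whose collapse witnesses maximality of $\mathcal{A}\cap\p_\alpha^{\lt\omega}$ is literally the same $M_\alpha$ that the construction consults at stage $\alpha+1$; this is what legitimizes applying Proposition~\ref{prop:propertiesOfProductP*} to the antichain $\mathcal{A}\cap\p_\alpha^{\lt\omega}\in M_\alpha$ to conclude maximality in $\p_{\alpha+1}^{\lt\omega}$. The propagation of maximality further up the chain is exactly the observation already recorded in the excerpt: at any later stage $\beta+1$ where the construction is nontrivial, $M_\beta$ contains $S_\alpha$ and hence $M_\alpha$, so $\mathcal{A}\cap\p_\alpha^{\lt\omega}\in M_\beta$, and Proposition~\ref{prop:propertiesOfProductP*} applies again with $\p_\beta$ in place of $\p_\alpha$; trivial successors and limits $\lambda$ (where $\p_\lambda^{\lt\omega}=\bigcup_{\beta<\lambda}\p_\beta^{\lt\omega}$, using finite support) preserve maximality automatically. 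Thus $\mathcal{A}\cap\p_\alpha^{\lt\omega}$ is maximal in all of $\p^{J\lt\omega}$, completing the proof.
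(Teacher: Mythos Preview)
Your proposal is correct and follows essentially the same approach as the paper: reflect $\mathcal A$ into a countable elementary submodel of $L_{\omega_2}$, use $\diamondsuit$ to arrange that the collapse of this submodel is precisely the $M_\alpha$ consulted at stage $\alpha$ (the paper does this by building a continuous elementary chain inside a transitive $M\prec L_{\omega_2}$ of size $\omega_1$ and applying $\diamondsuit$ to its code, which is exactly your ``fixed code whose $\alpha$-initial segments collapse to the required $L_\eta$'s''), and then invoke the sealing of $\mathcal A\cap\p_\alpha^{\lt\omega}$ to conclude $\mathcal A=\mathcal A\cap\p_\alpha^{\lt\omega}$ is countable. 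Your identification of the alignment between the reflecting model and the $\diamondsuit$-guessed $M_\alpha$ as the crux is exactly right.
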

\begin{proof}
Fix a maximal antichain $\mathcal A$ of $\p^{J\lt\omega}$. Choose some transitive $M\prec L_{\omega_2}$ of size $\omega_1$ with $\mathcal A\in M$. We can decompose $M$ as the union of a continuous elementary chain of countable substructures $$X_0\prec X_1\prec\cdots\prec X_\alpha\prec\cdots\prec M$$ with $\mathcal A\in X_0$. By properties of $\diamondsuit$, there is some $\alpha$ such that $\alpha=\omega_1\cap {X_\alpha}$, $\p_\alpha=\p^J\cap X_\alpha$, and $S_\alpha$ codes $X_\alpha$. Let $M_\alpha$ be the transitive collapse of $X_\alpha$. Then $\p_\alpha$ is the image of $\p^J$ under the collapse and $\alpha$ is the image of $\omega_1$. Let $\bar {\mathcal A}=\mathcal A\cap X_\alpha$ be the image of $\mathcal A$ under the collapse. So at stage $\alpha$ in the construction of $\p^J$, we chose a forcing extension $M_\alpha[G]$ of $M_\alpha$ by $\q(\p_\alpha)^{\lt\omega}$ and let $\p_{\alpha+1}=\p^*_\alpha$ as constructed in $M_\alpha[G]$. Thus, by our observation above, $\bar {\mathcal A}$ remains maximal in $\p^{J\lt\omega}$, and hence $\bar {\mathcal A}=\mathcal A$ is countable.
\end{proof}
Finally, we would like to observe that every maximal antichain of $\p_{\alpha+1}$ from $M_\alpha[G]$, in particular the antichain $\la \T_n\mid n<\omega\ra$ of generic perfect trees, remains maximal in $\p^J$.
\begin{proposition}\label{prop:MaxAntiMForcingExtension}
Suppose that $\mathcal A\in M_\alpha[G]$ is a maximal antichain of $\p_{\alpha+1}$ (or $\p_{\alpha+1}^{\lt\omega})$. Then $\mathcal A$ remains maximal in $\p^J$ (or $\p^{J\lt\omega})$.
\end{proposition}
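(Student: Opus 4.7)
The plan is to prove the proposition by transfinite induction on stages $\beta\ge\alpha+1$, showing that the given maximal antichain $\mathcal A\in M_\alpha[G]$ stays maximal in each $\p_\beta$, and hence in the union $\p^J$. The same scheme handles the product version $\p^{J\lt\omega}$ by invoking Proposition~\ref{prop:propertiesOfProductP*} in place of Proposition~\ref{prop:propertiesOfP*}(3); I describe only the non-product case.

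The base case $\beta=\alpha+1$ is the hypothesis. For limit $\beta$, every tree in $\p_\beta=\bigcup_{\gamma<\beta}\p_\gamma$ already lies in some earlier $\p_\gamma$ (with $\gamma\ge\alpha+1$), so compatibility with a member of $\mathcal A$ is inherited from the inductive hypothesis. For the successor step $\p_\beta\to\p_{\beta+1}$, there is nothing to do unless $S_\beta$ codes an extensional well-founded relation whose collapse is a suitable model $M_\beta$ containing $\p_\beta$ with $\beta=\omega_1^{M_\beta}$; in that case $\p_{\beta+1}=\p_\beta^*$ is built in $M_\beta[G_\beta]$ for the $L$-least $M_\beta$-generic $G_\beta\subseteq\q(\p_\beta)^{\lt\omega}$. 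By Proposition~\ref{prop:propertiesOfP*}(3) applied inside $M_\beta[G_\beta]$, it suffices to verify two things: that $\mathcal A$ is a maximal antichain of $\p_\beta$ (granted by induction), and that $\mathcal A$ is actually an element of $M_\beta$.

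The key point is the second condition, and it is handled exactly as in the paragraph preceding Theorem~\ref{th:JensenForcingCCC}. Since $\beta>\alpha$ and $\beta=\omega_1^{M_\beta}$, the suitable model $M_\beta$ contains $S_\alpha$ as an element and so can collapse it to recover $M_\alpha$; thus $M_\alpha\in M_\beta$ and $M_\alpha$ is countable there. Because $M_\beta$ is an initial segment of $L$ satisfying $\ZFC^-+``\mathcal P(\omega)\text{ exists}$'' and contains $\q(\p_\alpha)^{\lt\omega}$ together with the (countable) collection of its dense sets lying in $M_\alpha$, the $L$-least $M_\alpha$-generic filter $G$ is definable from $M_\alpha$ and absolute between $L$ and $M_\beta$, so $G\in M_\beta$. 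Consequently $M_\alpha[G]\in M_\beta$ and $\mathcal A\in M_\beta$, as required. Finally, every condition in $\p^J=\bigcup_{\gamma<\omega_1}\p_\gamma$ lies in some $\p_\gamma$, so maximality at each stage $\gamma\ge\alpha+1$ upgrades to maximality in $\p^J$.

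The main obstacle is the absoluteness check in the previous paragraph: one must confirm that the $L$-least $M_\alpha$-generic filter $G$ computed externally in $L$ coincides with the object definable inside $M_\beta$. This comes down to two standard absoluteness observations, namely that the constructibility order and the relation ``$F$ is a filter meeting every dense set of $\q(\p_\alpha)^{\lt\omega}$ that lies in $M_\alpha$'' are both $\Delta_1$ over any transitive model containing $M_\alpha$ and enough ordinals, so their least witness is the same whether computed in $M_\beta$ or in $L$. Once this is in place, the induction runs smoothly and the argument for $\p^{J\lt\omega}$ is identical with Proposition~\ref{prop:propertiesOfProductP*} replacing Proposition~\ref{prop:propertiesOfP*}(3).
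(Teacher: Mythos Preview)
Your argument is correct and follows essentially the same approach as the paper. The paper's proof is terser: it only records the key step $M_\alpha[G_\alpha]\subseteq M_\beta$ (arguing that $M_\beta$ sees $M_\alpha$ as countable, hence contains some $M_\alpha$-generic, hence contains the $L$-least one by suitability), leaving the surrounding transfinite induction implicit since it was already spelled out in the paragraph preceding Theorem~\ref{th:JensenForcingCCC}; your version makes that induction explicit and justifies the absoluteness of ``$L$-least $M_\alpha$-generic'' in slightly more detail, but the substance is the same.
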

\begin{proof}
Fix $\beta>\alpha$. We already argued that $M_\alpha\in M_\beta$. Since $M_\beta$ sees that $M_\alpha$ is countable, it has some $M$-generic filter for $\q(\p_\alpha)^{\lt\omega}$. But since $G_\alpha$ was chosen to be the $L$-least such filter, then by suitability, $M_\beta$ must contain $G_\alpha$. So $M_\alpha[G_\alpha]\subseteq M_\beta$.
\end{proof}
\section{The Kanovei-Lyubetsky Theorem}\label{sec:KanoveiLyubetsky}
We will reprove here the Kanovei-Lyubetsky theorem from \cite{kanovei:productOfJensenReals} showing that Jensen's poset $\p^J$ from Section~\ref{sec:PerfectPosets}, has the property that in a forcing extension $L[G]$ of $L$ by the finite support $\omega$-length product $\p^{J\lt\omega}$, the only $L$-generic reals for $\p$ are the $\omega$-many slices of $G$. This is a generalization of Jensen's uniqueness of generic filters property to products.

Suppose that $\p$ is a perfect poset and $H$ is a generic filter for the product $\p^{\lt\omega}$. We will call $x_n$ the real on the $n$-th coordinate of $H$ and let $\dot x_n$ be its canonical name.

For the next lemma, we suppose that $\p$ is a countable perfect poset that is an element of a suitable model $M$. We should think of $\p$ as one of the perfect posets $\p_\alpha$ arising at stage $\alpha$ in the construction of Jensen's poset $\p^J$ and of $M$ as the model $M_\alpha$ from that stage.
\begin{theorem}[Kanovei-Lyubetsky, \cite{kanovei:productOfJensenReals}]\label{th:denseToAvoidBranchesInProduct}
In $M$, suppose that $\dot r$ is a $\p^{\lt\omega}$-name for a real such that for all $n\in\omega$, $\one\forces_{\p^{\lt\omega}} \dot r\neq \dot x_n$. Then in a forcing extension $M[G]$ by $\q(\p)^{\lt\omega}$, for every generic perfect tree $\T_n$, conditions forcing that $\dot r\notin [\T_n]$ are dense in $\p^{*\lt\omega}$.
\end{theorem}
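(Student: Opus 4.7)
The plan is a fusion-style argument in the ground model $M$ that uses the freedom in $\q(\p)^{\lt\omega}$ to position the generic tree $\dot{\T}_n$ so that it avoids a value we can force onto $\dot r$ by a condition in $\p^{\lt\omega}$, and then exploits $\p^{\lt\omega} \subseteq \p^{*\lt\omega}$ to transfer the decision. The transfer is possible because every maximal antichain of $\p^{\lt\omega}$ from $M$ remains maximal in $\p^{*\lt\omega}$ (Proposition~\ref{prop:propertiesOfProductP*}), so the $\p^{\lt\omega}$-part of any $\p^{*\lt\omega}$-generic over $M[G]$ is $\p^{\lt\omega}$-generic over $M$ and determines $\dot r$.

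By density of $\U^{\lt\omega}$ (Proposition~\ref{prop:propertiesOfP*}), it suffices to treat a condition of the form $p = \la \T_{i_0}\wedge S_0,\ldots,\T_{i_m}\wedge S_m\ra$; after enlarging the support with a coordinate of the form $\T_n = \T_n \wedge \ex{\lt\omega}2$ (using $\ex{\lt\omega}2 \in \p$ by closure under unions), I may assume $i_0 = n$. Choose $q^\ast \in G$ deciding the data of $p$ and, using Proposition~\ref{prop:subtreeContainedInS}, refine $q^\ast$ so that its $n$-th coordinate $(T_0,k_0)$ has a distinguished node $s^\ast$ on level $k_0$ with $(T_0)_{s^\ast} \leq S_0$.

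Working in $M$ below $q^\ast$, the goal is to produce $q' \leq q^\ast$ with $q'(n) = (T',k')$, a $\p^{\lt\omega}$-condition $\la S'_0,\ldots,S'_m\ra \leq \la S_0,\ldots,S_m\ra$ (with $S'_0 \leq (S_0)_{s^\ast}$), and a node $t \in \ex{k'}2 \setminus T'$ such that $\la S'_0,\ldots,S'_m\ra \Vdash_{\p^{\lt\omega}} \dot r \restriction \check{k'} = \check t$. The hypothesis $\one \Vdash_{\p^{\lt\omega}} \dot r \neq \dot x_0$ produces such an extension: forcing $\dot r \restriction k'$ to $t$ and $\dot x_0 \restriction k'$ to a distinct value $s$ yields $s \in S'_0 \cap \ex{k'}2$ with $t \neq s$. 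A further application of Proposition~\ref{prop:subtreeContainedInS} refines $(T_0,k_0)$ so that above some node $s^{\ast\ast}$ extending $s^\ast$ there is a subtree contained in $S'_0$; combining this subtree with, for each other $u \in T_0 \cap \ex{k_0}2$, a subtree $(T_0)_{\tau_u} \in \p$ with $\tau_u$ extending $u$ and incompatible with $t$, gives a tree $T' \in \p$ (using closure of $\p$ under unions, together with the fact that $(T)_v \in \p$ for $T \in \p$ and $v \in T$, which follows from closure under meets) with $T' \cap \ex{k_0}2 = T_0 \cap \ex{k_0}2$ and $t \notin T' \cap \ex{k'}2$.

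Setting $\dot p' = \la \dot{\T}_n \wedge \check{S'_0}, \dot{\T}_{i_1}\wedge \check{S'_1}, \ldots, \dot{\T}_{i_m}\wedge \check{S'_m}\ra$, the surviving branch through $s^{\ast\ast}$ in $T'$ forces $\dot{\T}_n \wedge \check{S'_0} \neq \emptyset$, so $\dot p'$ is a genuine condition in $\U^{\lt\omega}$ with $\dot p' \leq p$ and $\dot p' \leq \la \check{S'_0},\ldots,\check{S'_m}\ra$ in $\p^{*\lt\omega}$. Hence $\dot p' \Vdash_{\p^{*\lt\omega}} \dot r \restriction \check{k'} = \check t$, while $q' \Vdash \dot{\T}_n \cap \ex{k'}2 = T' \cap \ex{k'}2 \not\ni t$, so $q' \Vdash \dot p' \Vdash_{\p^{*\lt\omega}} \dot r \notin [\dot{\T}_n]$, giving density as required. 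The main obstacle will be the bookkeeping when $n$ appears among the $i_k$ on several coordinates with distinct $S_k$'s: we must then invoke the hypothesis in a single $\p^{\lt\omega}$-extension pinning down each such $\dot x_k$ at once, and the fusion refinement of $T_0$ must simultaneously keep every $\dot{\T}_n \wedge \check{S'_k}$ nonempty, which requires a joint application of Propositions~\ref{prop:subtreeContainedInS} and~\ref{prop:propertiesOfProductP*}.
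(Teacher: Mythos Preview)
Your strategy --- pin $\dot r$ to a single initial segment $t$ and then prune $\T_n$ so that $t\notin\T_n$ --- differs from the paper's. The paper instead covers \emph{all} of level $k_d$ of $T_d$ by placing one subtree $(T_d)_s$ on each of finitely many \emph{fresh} coordinates of the product, strengthens the resulting $\p^{\lt\omega}$-condition $a$ to $a_q=\la W_0,\ldots,W_l\ra$ forcing $\dot r\notin[W_j]$ for every relevant $j$ (this is where the full hypothesis $\one\forces\dot r\neq\dot x_j$ for all $j$ is used), and then threads the $W_j$'s back into a fusion condition $\bar q\leq q$. The point is that $a_q\in\p^{\lt\omega}$ already forces $\dot r\notin[\T_d]$ over $\p^{*\lt\omega}$, and $\bar q\in G$ guarantees $a_q$ is compatible with $p$; no pruning of $\T_d$ is needed.

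Your argument, as written, has a real gap. You verify $\T_n\wedge S'_0\neq\emptyset$ via the branch through $s^{**}$, but you never argue that $\T_{i_k}\wedge S'_k\neq\emptyset$ for $k\geq 1$, and then simply assert that $\dot p'$ is a condition in $\U^{\lt\omega}$. These meets can be empty: you refined $q^*$ only on coordinate $n$, so for $i_k\neq n$ the generic tree $\T_{i_k}$ is unconstrained by $q'$, while $S'_k$ was obtained by an arbitrary strengthening of $S_k$ in $\p^{\lt\omega}$ with no relation to $q^*(i_k)$. If some $\T_{i_k}\wedge S'_k=\emptyset$ then $\dot p'$ is not a condition and you have produced nothing below $p$ forcing $\dot r\restrict k'=t$. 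Your closing remark flags the bookkeeping only for repeated occurrences of $n$ among the $i_k$, but the problem is already present for $i_k\neq n$. The fix is to first refine $q^*$ on \emph{every} coordinate $i_k$ (as in the proof of Proposition~\ref{prop:propertiesOfProductP*}) to obtain nodes $s_k$ with $(T_{i_k})_{s_k}\leq S_k$, choose $\la S'_0,\ldots,S'_m\ra$ below $\la (T_{i_0})_{s_0},\ldots,(T_{i_m})_{s_m}\ra$, and then thread each $S'_k$ into coordinate $i_k$ of $q'$. At that point you are doing everything the paper does, plus an extra pruning of the remaining branches of $T'_n$ to avoid $t$ --- a step the paper's covering trick sidesteps entirely.
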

\begin{proof}
Fix a condition $p\in \p^{*\lt\omega}$ and $d\in\omega$. We need to find $p'\leq p$ such that $p'\forces \dot r\not\in [\T_d]$. Since $\U^{\lt\omega}$ is dense in $\p^{*\lt\omega}$, we can assume without loss that $p\in \U^{\lt\omega}$. So let $$p=\la \T_{i_0}\wedge S_0,\T_{i_1}\wedge S_1,\ldots,\T_{i_m}\wedge S_m\ra.$$ By strengthening if necessary, we can assume that for some $n\leq m$, $i_n=d$. As we argued in the proof of Proposition~\ref{prop:propertiesOfProductP*}, there is a condition $q\in G$ with $q(n)=(T_n,k_n)$ such that for every $n\leq m$, there is a node $s_n$ on level $k_{i_n}$ of $T_{i_n}$ such that $(T_{i_n})_{s_n}\leq S_n$ and for $n\neq n'$, $s_n\neq s_{n'}$.

Now we are going to construct a condition $a_q=\la W_0,\ldots,W_{l'},\ldots,W_l\ra\in\p^{\lt\omega}$ associated to $q$ and satisfying the following properties.
\begin{enumerate}
\item For every $n\leq m\leq l'$, $W_n\leq (T_{i_n})_{s_n}$.
\item For every node $s$ on level $k_d$ of $T_d$, there is $i\leq l'$ such that $W_i\leq (T_d)_s$.
\item For every $n\leq l'$, $a_q\forces_{\p^{\lt\omega}}\dot r\notin[W_n]$.
\end{enumerate}
Let $a=\la (T_{i_0})_{s_0},(T_{i_1})_{s_1},\ldots,(T_{i_m})_{s_m}, (T_d)_{r_0},\ldots,(T_d)_{r_k}\ra$, where $r_0,\ldots,r_k$ are nodes on level $k_d$ of $T_d$ excluding those amongst the $s_n$. Since $\one_{\p^{\lt\omega}}\forces \dot r\neq \dot x_n$ for all $n\in\omega$, we can strengthen $a$ to a condition $a_q=\la W_0,W_1,\ldots,W_{m+k+2},\ldots,W_l\ra$ forcing that $\dot r\notin [W_n]$ for all $n\leq m+k+2$.

Using $a_q$, we construct the following condition $\bar q\leq q$. For $i\in\{i_n\mid n\leq m\}$, let $R_i$ be the tree we get by replacing $(T_i)_{s_n}$ with $W_n$ in $T_i$ whenever $i_n=i$ and if $i=d$, then we also replace all $T_{r_n}$ with the appropriate $W_j$ as well. Let $\bar q(i)=(R_i, k_i)$ for $i\in \{i_n\mid n\leq m\}$ and $\bar q(i)=q(i)$ otherwise. By density, some such condition $\bar q$, constructed from $a_q$, is in $G$. It follows that each $(\T_{i_n})_{s_n}\leq W_n\wedge S_n$ and $\T_d$ has a level $k_d$ such that for every node $s$ on level $k_d$, $\T_d\leq W_j$ for some $j$. The first part gives us that $p=\la \T_{i_0}\wedge S_0,\T_{i_1}\wedge S_1,\ldots,\T_{i_m}\wedge S_m\ra$ is compatible with $a_q$. Finally, we would like to argue that $a_q\forces_{\p^{*\lt\omega}} \dot r\notin[\T_d]$.

Let $H^*$ be any $V$-generic filter for $\p^{*\lt\omega}$ containing $a_q$ ($V=M[G]$). Since every maximal antichain of $\p^{\lt\omega}$ in $M$ remains maximal in $\p^{*\lt\omega}$, it follows that $H^*$ restricts to an $M$-generic filter $H$ for $\p^{\lt\omega}$ and $a_q\in H$. Thus in $M[H]$, $\dot r_H$ is not a branch through any $W_n$. But this is absolute, and so $\dot r_H$ is not a branch through $\T_d$ in $V[H^*]$.
\end{proof}

\begin{theorem}[Kanovei-Lyubetsky, \cite{kanovei:productOfJensenReals}]\label{th:uniquenessOfGenericsProduct}
Suppose $H\subseteq \p^{J\lt\omega}$ is $L$-generic. If $r\in L[H]$ is $L$-generic for $\p^J$, then $r=x_n$ for some $n<\omega$.
\end{theorem}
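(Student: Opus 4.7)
The plan is to argue by contradiction, using a reflection argument to capture the name $\dot r$ at some stage $\alpha$ of Jensen's construction, and then apply Theorem~\ref{th:denseToAvoidBranchesInProduct}. Suppose toward contradiction that $r \in L[H]$ is $L$-generic for $\p^J$ but $r \neq x_n$ for every $n < \omega$. Fix a $\p^{J\lt\omega}$-name $\dot r \in L$ for $r$, and fix some $p \in H$ forcing $\dot r \neq \dot x_n$ for every $n$.

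Mimicking the reflection in the proof of Theorem~\ref{th:JensenForcingCCC}, take a transitive $M \prec L_{\omega_2}$ of size $\omega_1$ containing $\dot r$ and $p$, decompose it as a continuous elementary chain of countable substructures $\la X_\beta \mid \beta < \omega_1 \ra$ with $\dot r, p \in X_0$, and use $\diamondsuit$ to choose $\alpha$ satisfying $\alpha = \omega_1 \cap X_\alpha$, $\p_\alpha = \p^J \cap X_\alpha$, and $S_\alpha$ codes $X_\alpha$. Let $\pi: X_\alpha \to M_\alpha$ be the collapse. By Theorem~\ref{th:JensenForcingCCC}, the conditions appearing in the nice name $\dot r$ may be taken from $\p^{J\lt\omega} \cap X_\alpha$, and the latter equals $\p_\alpha^{\lt\omega}$ by choice of $\alpha$. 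Since $\pi$ fixes $\omega$ and fixes each element of $\p_\alpha^{\lt\omega}$ pointwise (perfect trees, being subsets of $\ex{\lt\omega}2$, are hereditarily in $X_\alpha$), the map $\pi$ fixes $\dot r$ and $p$. Thus $\dot r \in M_\alpha$ is a $\p_\alpha^{\lt\omega}$-name, and elementarity gives $M_\alpha \models p \forces_{\p_\alpha^{\lt\omega}} \dot r \neq \dot x_n$ for every $n$; moreover, for any $\p^{J\lt\omega}$-generic $H'$, the evaluation $(\dot r)_{H'}$ depends only on $H' \cap \p_\alpha^{\lt\omega}$.

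Let $G_\alpha$ be the $L$-least $M_\alpha$-generic filter for $\q(\p_\alpha)^{\lt\omega}$ used to build $\p_{\alpha+1} = \p_\alpha^*$. The proof of Theorem~\ref{th:denseToAvoidBranchesInProduct} goes through verbatim below $p$ instead of below $\one$, so for each $k < \omega$ the set of conditions $q \leq p$ in $\p_{\alpha+1}^{\lt\omega}$ forcing $\dot r \notin [\T_k]$ is dense below $p$, as computed in $M_\alpha[G_\alpha]$. Pick a maximal antichain $\mathcal A_k \in M_\alpha[G_\alpha]$ of $\p_{\alpha+1}^{\lt\omega}$ below $p$ consisting of such conditions. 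By Proposition~\ref{prop:MaxAntiMForcingExtension}, $\mathcal A_k$ remains maximal below $p$ in $\p^{J\lt\omega}$, so there is some $q_k \in H \cap \mathcal A_k$ forcing $\dot r \notin [\T_k]$ in $\p_{\alpha+1}^{\lt\omega}$. Because $H \cap \p_{\alpha+1}^{\lt\omega}$ is $M_\alpha[G_\alpha]$-generic (again by preservation of maximal antichains), we conclude $r = (\dot r)_H \notin [\T_k]$ for every $k$. But $\{\T_k \mid k < \omega\}$ is a maximal antichain of $\p_{\alpha+1}$ by Proposition~\ref{prop:propertiesOfP*}(2), hence of $\p^J$ by Proposition~\ref{prop:MaxAntiMForcingExtension}, and $r$ is $L$-generic for $\p^J$, forcing $r \in [\T_k]$ for some $k$---the desired contradiction.

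The main obstacle I expect is verifying that the collapse $\pi$ actually fixes the name $\dot r$, so that the application of Theorem~\ref{th:denseToAvoidBranchesInProduct} inside $M_\alpha[G_\alpha]$ truly concerns the real we care about; this requires care with nice-name representations and with which components lie in $X_\alpha$. Once this identification is pinned down, everything else is a routine chase through the preservation-of-antichains principles established in the previous section.
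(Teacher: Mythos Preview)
Your proposal is correct and follows essentially the same approach as the paper: reflect the name $\dot r$ into some $M_\alpha$ via the $\diamondsuit$-chain argument, invoke Theorem~\ref{th:denseToAvoidBranchesInProduct} in $M_\alpha[G_\alpha]$ to obtain antichains of conditions forcing $\dot r\notin[\T_k]$, and use Proposition~\ref{prop:MaxAntiMForcingExtension} to propagate these antichains (and the antichain $\{\T_k\}$) up to $\p^{J\lt\omega}$. The only cosmetic difference is that the paper arranges for $\one$ to force $\dot r\neq\dot x_n$ (by mixing names), whereas you carry along a condition $p\in H$ forcing this and localize Theorem~\ref{th:denseToAvoidBranchesInProduct} below $p$; this is harmless, and your anticipated ``main obstacle'' about $\pi$ fixing $\dot r$ is handled exactly as in the paper, via the ccc and the nice-name representation.
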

\begin{proof}
Let's suppose that $r\in L[H]$ is a real which is not one of the $x_n$. Let $\dot r$ be a nice $\p^{J\lt\omega}$-name for $r$ such that for all $n\in\omega$, $\one_{\p^{j\lt\omega}}\forces \dot r\neq \dot x_n$.

Choose some transitive model $M\prec L_{\omega_2}$ of size $\omega_1$ with $\dot r\in N$. We can decompose $M$ as the union of a continuous elementary chain of countable substructures $$X_0\prec X_1\prec\cdots\prec X_\alpha\prec\cdots\prec M$$ with $\dot r\in X_0$. By properties of $\diamondsuit$, there is some $\alpha$ such that $\alpha=\omega_1\cap X_\alpha$, $\p_\alpha=\p^J\cap X_\alpha$, and $S_\alpha$ codes $X_\alpha$. Let $M_\alpha$ be the collapse of $X_\alpha$. Then $\p_\alpha$ is the image of $\p^J$ under the collapse and $\alpha$ is the collapse of $\omega_1$. Clearly $\dot r$ is fixed by the collapse because it is a nice name and all antichains of $\p^{J\lt\omega}$ are countable (by Theorem~\ref{th:JensenForcingCCC}). So at stage $\alpha$ in the construction of $\p^J$, we chose a forcing extension $M_\alpha[G]$ by $\q(\p_\alpha)^{\lt\omega}$ and let $\p_{\alpha+1}=\p_\alpha^*$ as constructed in $M_\alpha[G]$. By elementarity, $M_\alpha$ satisfies that $\one_{\p_\alpha^{\lt\omega}}\forces\dot r\neq \dot x_n$ for all $n\in\omega$. Thus, by Theorem~\ref{th:denseToAvoidBranchesInProduct}, for every $n<\omega$, $\p_{\alpha+1}^{\lt\omega}$ has a maximal antichain $\mathcal A_n\in M_\alpha[G]$ consisting of conditions $q$ such that $q\forces_{\p_{\alpha+1}^{\lt\omega}}\dot r\notin [\T_n]$. It follows, using Proposition~\ref{prop:MaxAntiMForcingExtension}, that all the antichains $\mathcal A_n$ remain maximal in $\p^{J\lt\omega}$. Also, by Proposition~\ref{prop:MaxAntiMForcingExtension}, $\la \T_n\mid n<\omega\ra$ remains maximal in $\p^J$.

So let's argue that if $q\in \mathcal A_n$, then $q\forces_{\p^{J\lt\omega}}\dot r\notin [\T_n]$. Let $\bar H^*\subseteq \p^{J\lt\omega}$ be an $L$-generic filter containing $q$ and let $\bar H$ be the restriction of $\bar H^*$ to $\p_{\alpha+1}^{\lt\omega}$. Since $q\in \bar H$, it follows that $\dot r_{\bar H}\notin [\T_n]$ holds in $M_\alpha[G][\bar H]$, but this statement is absolute and so also holds in $L[\bar H^*]$. Since $H$ must meet every $\mathcal A_n$, it holds in $L[H]$ that $\dot r_H=r$ is not a branch through any $\T_n$. So $r$ cannot be $L$-generic for $\p^J$.
\end{proof}
\section{Finite iterations of perfect posets}
An iteration of perfect posets is an iteration of forcing notions in which every initial segment forces that the next poset is perfect. Here we will only be dealing with finite iterations of perfect posets, so we are not concerned with issues of support.
\begin{definition} A \emph{finite iteration of perfect posets} is a finite iteration $$\p_n=\q_0*\dot \q_1*\cdots*\dot \q_{n-1}$$ such that $\q_0$ is a perfect poset and for $1\leq i<n$,
    \begin{center}
    $\one_{\p_i}\forces``\dot \q_i$ is a perfect poset".\end{center}
\end{definition}
Suppose $G\subseteq \p_n$ is $V$-generic. For $1\leq i<n$, let $G_i$ be the restriction of $G$ to $\p_i$. Let $G(0)=G_1$ and for $1\leq i<n$, let $G(i)=\{p(i)_{G_i}\mid p\in G\}$. Let $r_i$ be the unique real determined by $G(i)$. It is not difficult to see that the sequence of reals $\vec r=\la r_0,\ldots, r_{n-1}\ra$ determines $G$. Elements of $G_1$ are trees with $r_0$ as a branch, and inductively, elements of $G_{i+1}$ are conditions $p\in \p_i$ such that $p\restrict i\in G_i$ and $r_i\in p(i)_{G_i}$.

The analogue of the fusion poset $\q(\p)$ for a finite iteration $\p_n$ of perfect posets is the fusion poset $\q(\p_n)$ whose conditions are pairs $(p,F)$ with $p\in\p_n$ and $F:n\to\omega$, ordered so that $(p_2,F_2)\leq (p_1,F_1)$ whenever $p_2\leq p_1$ and for every $i<n$, we have $F_2(i)\geq F_1(i)$ and $$p_2\restrict i\forces p_1(i)\cap \ex{F_1(i)}2=p_2(i)\cap \ex{F_1(i)}2.$$

Fusion arguments with names for perfect trees require that we have some information about a fixed level $n$ of the tree. We will now argue that there are densely many conditions in $\q(\p_n)$ where this is the case.

Suppose $p\in \p_n$ and $\sigma:n\to \ex{\lt\omega}2$. Following \cite{abraham:jensenRealsIterations}, let's define, by induction on $n$, what it means for $\sigma$ to \emph{lie} on $p$.\footnote{Abraham \cite{abraham:jensenRealsIterations} uses the terminology $\sigma$ is \emph{consistent} with $p$.} For $n=1$, we shall say that $\sigma$ \emph{lies} on $p$ whenever $\sigma(0)\in p(0)$. If $\sigma$ lies on $p$, we shall denote by $p\mid\sigma$ the condition $p(0)_{\sigma(0)}$. Note that $p\mid\sigma\leq p$. So suppose that we have defined when $\sigma$ lies on $p$ for $p\in \p_n$, and for $\sigma$ which lies on $p$, we have defined $p\mid\sigma$ so that $p\mid\sigma\leq p$. Let $p\in \p_{n+1}$. We define that $\sigma$ lies on $p$ if $\sigma\restrict n$ lies on $p\restrict n$ and $(p\restrict n)\mid (\sigma\restrict n)\forces \sigma(n)\in p(n)$. If $\sigma$ lies $p$, we shall denote by $p\mid\sigma$ the condition $\bar p$ such that $\bar p\restrict n=(p\restrict n)\mid (\sigma\restrict n)$ and $\bar p(n)=\dot T$, where $\dot T$ is a $\p_n$-name that is interpreted as $p(n)_{\sigma(n)}$ by any $\p_n$-generic filter containing $(p\restrict n)\mid (\sigma\restrict n)$ and as $p(n)$ otherwise. Clearly this gives that $p\mid\sigma\leq p$.

\begin{definition}
Let $F:n\to\omega$ and $\sigma:n\to\ex{\lt\omega}2$. We shall say that $\sigma$ \emph{lies on levels} $F$ if $\sigma(i)\in \ex{F(i)}2$ for all $i<n$.\footnote{Abraham~\cite{abraham:jensenRealsIterations} uses the terminology $\sigma$ is \emph{bounded} by $F$.} We shall say that a pair $(p,F)$ with $p\in\p_n$ is \emph{determined} if for every $\sigma$ lying on levels $F$, either $\sigma$ lies on $p$ or there is some $i<n$ such that $\sigma\restrict i$ lies on $p\restrict i$ and $(p\restrict i)\mid (\sigma\restrict i)\forces \sigma(i)\notin p(i)$. If $\sigma$ lies on levels $F$ and lies on $p$, we shall say that $\sigma$ \emph{lies} on $(p,F)$.
\end{definition}
\noindent Observe that whenever a pair $(p,F)$ is determined, for every $\sigma$ which lies on $p$, $(p\mid\sigma)\restrict i$ decides the $F(i)$-th level of $p(i)$, and indeed $(p\mid\sigma)\restrict i$ forces that $p(i)$ is the union of the $p\mid\sigma'(i)$ for $\sigma'\restrict i=\sigma\restrict i$ that lie on $(p,F)$.
\begin{proposition} Suppose $\p_n$ is a finite iteration of perfect posets.
\begin{enumerate}
\item Determined conditions $(p,F)$ are dense in $\q(\p_n)$.
\item Given a determined condition $(p,F)$, the finite set $$\{p\mid \sigma\mid \sigma\text{ lies on }(p,F)\}$$ is a maximal antichain below $p$ in $\p_n$.
\end{enumerate}
\end{proposition}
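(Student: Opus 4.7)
We prove (1) and (2) simultaneously by induction on $n$. The base case $n=1$ is immediate: every $(p,F) \in \q(\q_0)$ is automatically determined, since whether a node $\sigma(0) \in \ex{F(0)}2$ lies in $p(0)$ is decided outright by the ground-model tree $p(0)$; and the finite set $\{p(0)_s : s \in p(0) \cap \ex{F(0)}2\}$ is a maximal antichain below $p(0)$ in $\q_0$ by standard perfect-tree facts.

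For the inductive step of (2), suppose $(p,F) \in \q(\p_{n+1})$ is determined; one checks directly from the definition that $(p\restrict n, F\restrict n)$ is then determined in $\q(\p_n)$. If $\sigma_1 \neq \sigma_2$ lie on $(p,F)$ and $i$ is the least coordinate where they differ, then $(p\mid\sigma_1)\restrict i = (p\mid\sigma_2)\restrict i$ forces the respective $i$-th coordinates to be $(p(i))_{\sigma_1(i)}$ and $(p(i))_{\sigma_2(i)}$, which are disjoint subtrees since $\sigma_1(i)$ and $\sigma_2(i)$ are incomparable nodes of equal length; this gives antichain-ness. For maximality, given $q \leq p$, apply IH(2) at $\p_n$ to pick $\sigma\restrict n$ lying on $(p\restrict n, F\restrict n)$ with $q\restrict n$ compatible with $(p\restrict n)\mid(\sigma\restrict n)$; a common extension $q^* \in \p_n$ forces $q(n) \leq p(n)$, and determinedness ensures $q^*$ decides level $F(n)$ of $p(n)$. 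Since $q(n)$ is forced to be a perfect tree, its branches pass through some node $\sigma(n)$ at level $F(n)$ of $p(n)$, producing $\sigma$ lying on $(p,F)$ with $q$ compatible with $p\mid\sigma$.

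For the inductive step of (1), given $(p,F) \in \q(\p_{n+1})$, apply IH(1) to $(p\restrict n, F\restrict n)$ to obtain a determined $(p', F') \leq (p\restrict n, F\restrict n)$ in $\q(\p_n)$; by (2) at $\p_n$, the slices $\{p'\mid\sigma' : \sigma' \text{ lies on }(p', F')\}$ form a finite maximal antichain below $p'$. For each such $\sigma'$, extend $p'\mid\sigma'$ in $\p_n$ to some $p''_{\sigma'}$ that decides, as an explicit ground-model subset of $\ex{m}2$, the full $m$-th level of the perfect tree $p(n)$, for a common $m \geq F(n)$ chosen once for all $\sigma'$. Assemble these finitely many local strengthenings into a single condition $\bar p\restrict n \leq p'$ in $\p_n$, keep $\bar p(n) = p(n)$ as a $\p_n$-name, and set $\bar F = F' \concat \la m \ra$; then $(\bar p, \bar F)$ is determined below $(p,F)$, since determinedness of $(p', F')$ handles the first $n$ coordinates and the decision about level $m$ of $p(n)$ handles coordinate $n$.

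The main obstacle is the assembly step. We carry it out coordinate-by-coordinate from $i=0$ up to $i=n-1$: having built $\bar p\restrict i$ whose slices $(\bar p\restrict i)\mid(\sigma'\restrict i)$ refine the corresponding initial segments of the $p''_{\sigma'}$, each $p''_{\sigma'}$ prescribes, below its own slice, a strengthening of the subtree $(p'(i))_{\sigma'(i)}$; closure of $\q_i$ under finite unions then lets us unite these prescriptions into a single $\p_i$-name $\bar p(i)$ for a refined perfect subtree of $p'(i)$. The non-conflict across slices---i.e.\ that prescriptions from different $\sigma'\restrict i$ land on disjoint portions of $p'(i)$---is guaranteed by IH(2) applied to $\p_i$, which tells us the slices $(p'\restrict i)\mid(\sigma'\restrict i)$ form an antichain. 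This closure-plus-disjointness is precisely what makes the union well-defined and yields the desired $\bar p\restrict n$.
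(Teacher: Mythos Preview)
Your argument for (2) is fine, and the base case and overall inductive framework are sound. The paper itself does not prove this proposition but simply defers to \cite{abraham:jensenRealsIterations}, so there is no ``paper's proof'' to compare against directly.

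However, the assembly step in your proof of (1) has a genuine gap. You independently strengthen each $p'\mid\sigma'$ to some $p''_{\sigma'}$ deciding level $m$ of $p(n)$, and then attempt to reunite these via coordinate-wise unions. The problem is this: when $\sigma'$ and $\sigma''$ agree through coordinate $i$, i.e.\ $\sigma'\restrict(i+1)=\sigma''\restrict(i+1)$, the conditions $p''_{\sigma'}(i)$ and $p''_{\sigma''}(i)$ are both strengthenings of the \emph{same} subtree $(p'(i))_{\sigma'(i)}$, and nothing prevents them from being incompatible. Taking their union produces a tree lying below neither, so after your assembly the slice $(\bar p\restrict n)\mid\sigma'$ is not below $p''_{\sigma'}$---its earlier coordinates are genuine unions over several $p''_{\sigma''}$---and hence need not decide level $m$ of $p(n)$ at all. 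Your claimed invariant that ``$(\bar p\restrict i)\mid(\sigma'\restrict i)$ refines $p''_{\sigma'}\restrict i$'' already fails at coordinate $0$ whenever two distinct $\sigma'$ share $\sigma'(0)$. Your disjointness observation handles only distinct $\sigma'\restrict i$, not shared initial segments.

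The repair, which is essentially Abraham's argument and underlies Proposition~\ref{prop:InsideDenseOpenSet} in this paper, is to process the $\sigma'$ \emph{sequentially} rather than independently. Strengthen $p'\mid\sigma'_0$ into the open dense set $D$ of conditions deciding level $m$ of $p(n)$; then propagate this strengthening to every other $\sigma'_j$ along the shared initial segment (replacing $(p'\mid\sigma'_j)\restrict i$ by the new initial segment wherever $\sigma'_j\restrict i=\sigma'_0\restrict i$), obtaining a new determined $(r_0,F')\leq(p',F')$ with $r_0\mid\sigma'_0\in D$. Now strengthen $r_0\mid\sigma'_1$ into $D$ and propagate again, and so on through the finitely many $\sigma'$. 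Because $D$ is open and each propagation step only \emph{further strengthens} the already-processed slices, the earlier decisions are never undone. This coherence-preserving sequential procedure is exactly what your union approach lacks.
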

\begin{proposition}\label{prop:InsideDenseOpenSet}
Suppose $\p_n$ is a finite iteration of perfect posets and $D\subseteq \p_n$ is open dense. For any determined $(p,F)\in\q(\p_n)$, there is a determined $(q,F)\leq (p,F)$ satisfying $$q\mid\sigma\in D\text{ whenever }\sigma\text{ lies on }(q,F).$$
\end{proposition}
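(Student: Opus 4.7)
The plan is to process the finitely many $\sigma$ that lie on $(p,F)$ one at a time. Enumerate them as $\sigma_0,\ldots,\sigma_{k-1}$, and recursively construct a decreasing sequence $p=p_0\geq p_1\geq\cdots\geq p_k=:q$ in $\p_n$ satisfying the following invariants at every stage $j$: (a) $(p_j,F)\leq(p,F)$ is a determined condition of $\q(\p_n)$; (b) exactly the same $\sigma$ lie on $(p_j,F)$ as on $(p,F)$, and for each such $\sigma$ and each $i<n$ the restricted condition $(p_j\mid\sigma)\restrict i$ forces the level-$F(i)$ content of $p_j(i)$ to agree with what $(p\mid\sigma)\restrict i$ forces of $p(i)$; (c) $p_j\mid\sigma_i\in D$ for every $i<j$. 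Setting $q=p_k$ then yields the proposition.

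The step from $p_j$ to $p_{j+1}$ is the heart of the argument. Using that $D$ is dense open, pick $d\leq p_j\mid\sigma_j$ with $d\in D$. We then graft $d$ back into $p_j$ in place of the part below $\sigma_j$. At coordinate $0$, since $(p_j,F)$ is determined and $\sigma_j(0)$ lies on level $F(0)$ of $p_j(0)$, the tree $p_j(0)$ is the finite union $\bigcup_\tau p_j(0)_\tau$ over the nodes $\tau$ on that level; define
\[p_{j+1}(0)\;=\;d(0)\cup\bigcup_{\tau\neq\sigma_j(0)}p_j(0)_\tau,\]
which lies in $\q_0$ by closure under unions (and closure under meets with $(\ex{\lt\omega}2)_\tau$ to know each $p_j(0)_\tau\in\q_0$). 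For coordinates $i\geq 1$, build the $\p_i$-name $p_{j+1}(i)$ by mixing over the antichain of restrictions: below $(p_j\mid\sigma_j)\restrict i$, where the level-$F(i)$ content of $p_j(i)$ is decided to be the finite set $\{\sigma'(i):\sigma'\text{ lies on }(p_j,F)\text{ and }\sigma'\restrict i=\sigma_j\restrict i\}$, let $p_{j+1}(i)$ name the analogous union $d(i)\cup\bigcup_{\tau\neq\sigma_j(i)}p_j(i)_\tau$; below any condition incompatible with $(p_j\mid\sigma_j)\restrict i$, let $p_{j+1}(i)$ simply name $p_j(i)$. This mixing is legitimate because $\{(p_j\mid\sigma)\restrict i:\sigma\text{ lies on }(p_j,F)\}$ refines to a maximal antichain below $p_j\restrict i$ in $\p_i$.

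Routine verification then shows that invariants (a), (b), (c) propagate: the level-$F$ structure is unchanged because in each coordinate the grafting happens strictly below $\sigma_j$, so $(p_{j+1},F)$ remains determined with the same $\sigma$'s lying on it; $p_{j+1}\mid\sigma_i=p_j\mid\sigma_i\in D$ for $i<j$ because those parts of $p_j$ were left intact; and $p_{j+1}\mid\sigma_j=d\in D$ by construction.

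The principal obstacle is ensuring that the grafted name $p_{j+1}(i)$ at higher coordinates is forced to be a member of the perfect poset $\dot\q_i$. It is precisely here that the ``determined'' hypothesis on $(p_j,F)$ is needed: it lets us decompose the $\p_i$-name $p_j(i)$, below $(p_j\mid\sigma_j)\restrict i$, as a finite union of subtrees $p_j(i)_\tau$ that are themselves forced to lie in $\dot\q_i$, so that replacing the summand indexed by $\tau=\sigma_j(i)$ with $d(i)$ produces a name for another element of $\dot\q_i$ via the forced closure under unions. Once this name-theoretic grafting is in place, the rest of the argument is bookkeeping.
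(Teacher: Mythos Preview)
The paper does not actually prove this proposition; it defers to Abraham \cite{abraham:jensenRealsIterations}. Your one-$\sigma$-at-a-time grafting argument is the standard approach and is essentially correct.

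There is one imprecision worth flagging. You write that ``$p_{j+1}\mid\sigma_i=p_j\mid\sigma_i\in D$ for $i<j$ because those parts of $p_j$ were left intact.'' This equality need not hold: if $\sigma_i$ and $\sigma_j$ share an initial segment, say $\sigma_i\restrict\ell=\sigma_j\restrict\ell$ with $\sigma_i(\ell)\neq\sigma_j(\ell)$, then your grafting replaces the first $\ell$ coordinates of $p_j\mid\sigma_i$ by those of $d$, so $p_{j+1}\mid\sigma_i$ is $d\restrict\ell$ concatenated with the tail of $p_j\mid\sigma_i$ from coordinate $\ell$. What you do get is $p_{j+1}\mid\sigma_i\leq p_j\mid\sigma_i$, and since $D$ is \emph{open} this still yields $p_{j+1}\mid\sigma_i\in D$. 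So invariant (c) propagates, but via openness of $D$ rather than literal equality; your argument as written does not invoke openness at this step and should.
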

\noindent For details of proofs, see \cite{abraham:jensenRealsIterations}.

Since we will be working mainly with determined conditions, we will now introduce a kind of normal form for them.
\begin{definition}\label{def:sigmaAssigment}
Suppose $\p_n$ is a finite iteration of perfect posets and $\sigma:n\to \ex{\lt\omega}2$. Let's call a condition $p\in \p_n$ a $\sigma$-\emph{condition} if
\begin{enumerate}
\item $p(0)\leq (\ex{\lt\omega}2)_{\sigma(0)}$,
\item for all $1\leq i<n$, $p\restrict i\forces p(i)\leq (\ex{\lt\omega}2)_{\sigma(i)}$.
\end{enumerate}

Suppose that $X_F$ is a collection of $\sigma:n\to\ex{\lt\omega}2$ lying on levels $F:n\to\omega$. An $X_F$-\emph{assignment} is a function $\varphi:X_F\to \p_n$ such that each $\varphi(\sigma)$ is a $\sigma$-condition and $\varphi(\sigma)\restrict i=\varphi(\sigma')\restrict i$  whenever $\sigma\restrict i=\sigma'\restrict i$.
\end{definition}

To motivate these definitions, consider a determined condition $(p,F)\in \q(\p_n)$ and let $X_F^p$ be the set of all $\sigma$ that lie on it. In this case, the map $\varphi_p$ defined by $\varphi_p(\sigma)=p\mid\sigma$ for all $\sigma\in X_F^p$ is clearly an $X_F^p$-assignment. Thus, a determined condition $(p,F)$ gives us a natural $X_F$-assignment, and now conversely we would like to argue that any $X_F$-assignment has a naturally associated determined condition $(q,F)$.

Suppose $X_F$ and $\varphi$ are as in Definition~\ref{def:sigmaAssigment}. Observe that given any $\sigma,\sigma'\in X_F$, either $\sigma(0)=\sigma'(0)$ and so $\varphi(\sigma)(0)=\varphi(\sigma')(0)$, or $\sigma(0)=s\neq t=\sigma'(0)$ are two nodes on level $F(0)$ such that $\varphi(\sigma)(0)\leq (\ex{\lt\omega}2)_s$ and $\varphi(\sigma')(0)\leq (\ex{\lt\omega}2)_t$. More generally, if $\sigma\neq\sigma'$, then there is some least $i$ such that $\sigma\restrict i=\sigma'\restrict i$ and there are nodes $\sigma(i)=s\neq t=\sigma'(i)$ on level $F(i)$ such that $\varphi(\sigma)\restrict i\forces \varphi(\sigma)(i)\leq (\ex{\lt\omega}2)_s$ and $\varphi(\sigma')\restrict i\forces \varphi(\sigma')(i)\leq(\ex{\lt\omega}2)_t$. It follows, in particular, that for any $i<n$, the conditions $\varphi(\sigma)\restrict i$ for $\sigma\in X_F$ form an antichain.

\begin{proposition}\label{prop:sigmaConditions}
Suppose $X_F$ is a collection of $\sigma:n\to \ex{\lt\omega}2$ on level some $F:n\to\omega$ and $\varphi$ is an $X_F$-assignment. Then there is a determined condition $(q,F)\in \q(\p_n)$ such that
\begin{enumerate}
\item $\sigma$ lies on $(q,F)$ if and only if $\sigma\in X_F$,
\item for every $\sigma\in X_F$,
$$q\mid\sigma(0)=\varphi(\sigma)(0),$$ and for all $i<n$, $(q\mid\sigma)\restrict i$ and $\varphi(\sigma)\restrict i$ extend each other and force $$q\mid\sigma(i)=\varphi(\sigma)(i).$$
\end{enumerate}
\end{proposition}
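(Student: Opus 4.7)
My plan is to build $q$ by induction on the coordinate $i<n$, defining $q(i)$ as a mixed $\p_i$-name whose interpretation below each ``stem'' $(q\restrict i)\mid\tau$ (for $\tau\in X_F\restrict i$) is the finite union $\bigcup\{\varphi(\sigma)(i)\mid \sigma\in X_F,\,\sigma\restrict i=\tau\}$. The coherence clause of Definition~\ref{def:sigmaAssigment} makes this union unambiguous (different representatives $\sigma\in S_\tau$ agree on $\varphi(\sigma)\restrict i$), and closure of perfect posets under finite unions keeps us inside $\dot\q_i$.

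At the base, I set $q(0)=\bigcup\{\varphi(\sigma)(0)\mid \sigma\in X_F\}\in\p_0$. For $1\leq i<n$ I carry three invariants: (a) for $\tau$ on levels $F\restrict i$, $\tau$ lies on $q\restrict i$ if and only if $\tau\in X_F\restrict i$; (b) for each such $\tau$ the conditions $(q\restrict i)\mid\tau$ and $\varphi(\sigma)\restrict i$ extend each other, for any $\sigma\in X_F$ with $\sigma\restrict i=\tau$; (c) the set $\{(q\restrict i)\mid\tau\mid \tau\in X_F\restrict i\}$ is an antichain in $\p_i$, because any two distinct such $\tau,\tau'$ disagree at a least coordinate $j<i$ and so force incompatible level-$F(j)$ nodes. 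Using (c) I form $q(i)$ as the mixed name described, defaulting to $\check{\ex{\lt\omega}2}$ off the antichain. Propagating (a)--(c) to $i+1$ needs only that $\varphi(\sigma)(i)$ is forced $\leq (\ex{\lt\omega}2)_{\sigma(i)}$ and that distinct level-$F(i)$ nodes are pairwise incompatible.

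With the invariants in hand, conclusion (1) is essentially (a): for $\sigma\in X_F$, $\sigma$ lies on $q$ by construction; and for $\sigma\notin X_F$ on levels $F$, choose $i$ minimal with $\sigma\restrict(i+1)\notin X_F\restrict(i+1)$. Then $\sigma\restrict i$ lies on $q\restrict i$ by (a), but $(q\restrict i)\mid(\sigma\restrict i)$ forces $\sigma(i)\notin q(i)$: the node $\sigma(i)$ is on level $F(i)$ and differs from every $\sigma'(i)$ with $\sigma'\in S_{\sigma\restrict i}$, so it lies in none of the summands $\varphi(\sigma')(i)\subseteq(\ex{\lt\omega}2)_{\sigma'(i)}$ of the union. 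This simultaneously yields half of (1) and determinacy of $(q,F)$. Conclusion (2) is packaged into invariant (b) together with the computation $(q(i))_{\sigma(i)}=\varphi(\sigma)(i)$, which holds below the stem because the remaining summands in the union are supported on incompatible level-$F(i)$ nodes and so contribute nothing below $\sigma(i)$ beyond prefixes that $\varphi(\sigma)(i)$ already contains.

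The main obstacle will be the careful handling of the iterated-forcing name $q(i)$: I need to check that mixing the finite-union names over the antichain of stems actually produces a $\p_i$-name forced to lie in $\dot\q_i$, and that the condition $(q\mid\sigma)\restrict i$, as formally unfolded by the inductive definition of ``lies on'' preceding the proposition, really coincides up to forcing-equivalence with $\varphi(\sigma)\restrict i$ at each inductive stage. Both verifications are routine once one has the coherence clause of an $X_F$-assignment, invariant (c), and closure of perfect posets under finite unions, but they must be stated precisely in the name-calculus of the iteration $\p_n$.
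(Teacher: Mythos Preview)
Your proposal is correct and follows essentially the same construction as the paper: define $q(0)=\bigcup_{\sigma\in X_F}\varphi(\sigma)(0)$ and, for $i\geq 1$, let $q(i)$ be a mixed name whose interpretation below each stem is the finite union of the relevant $\varphi(\sigma)(i)$'s, then verify (1) and (2) by induction on $i$. The only cosmetic difference is that the paper mixes over the antichain $\{\varphi(\sigma)\restrict i\mid\sigma\in X_F\}$ (already known to be an antichain from the discussion preceding the proposition) rather than over your inductively constructed antichain $\{(q\restrict i)\mid\tau\mid\tau\in X_F\restrict i\}$; since your invariant (b) makes these two antichains term-by-term forcing-equivalent, the resulting names coincide.
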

\begin{proof}
Let $q(0)$ be the union of the $\varphi(\sigma)(0)$ for $\sigma\in X_F$, which is in the perfect poset $\q_0$. Let $q(1)$ be the (canonical) $\p_1$-name for the tree which is the union of the collection of trees given by the interpretation of the name $$\{( \varphi(\sigma)(1),\varphi(\sigma)(0))\mid \sigma\in X_F\}.$$  Since each $\varphi(\sigma)(0)\forces \varphi(\sigma)(1)\in\dot\q_1$, and $\dot \q_1$ is forced to be a perfect poset, it follows that $q(0)\forces q(1)\in \dot\q_1$. Let's see what $q(1)_G$ looks like in a forcing extension $V[G]$ by $\p_1$. If $\varphi(\sigma)(0)$ and $\varphi(\sigma')(0)$ are in $G$, then $\sigma(0)=\sigma'(0)=s$ for some $s$, and so $\varphi(\sigma)(0)=\varphi(\sigma')(0)=p$ for some $p$. Thus, the interpretation $q(1)_G$ is the tree which is the union of the $\varphi(\sigma)(1)_G$ for $\sigma(0)=s$. Similarly, let $q(i)$ be the $\p_i$-name for the tree which is the union of the collection of trees given by the interpretation of the name $\{(\varphi(\sigma)(i), \varphi(\sigma)\restrict i)\mid \sigma\in X_F\}$. Again, we have $q\restrict i\forces q(i)\in\dot\q_i$. Let's see now what $q(i)_G$ looks like in a forcing extension $V[G]$ by $\p_i$. Since $\varphi(\sigma)\restrict i$ for $\sigma\in X_F$ form an antichain, if $\varphi(\sigma)\restrict i$ and $\varphi(\sigma')\restrict i$ are both in $G$, then $\sigma'\restrict i=\sigma\restrict i=\tau$ for some $\tau$. So the interpretation $q(i)_G$ is the union of the $\varphi(\sigma)(i)_G$ for $\sigma\restrict i=\tau$.

First, we argue that every $\sigma\in X_F$ lies on $(q,F)$ and simultaneously show (2). So fix some $\sigma\in X_F$. By construction $\sigma(0)\in q(0)$ and $q(0)_{\sigma(0)}=\varphi(\sigma)(0)$. So assume inductively that for some $i<n$,
\begin{enumerate}
\item $\sigma\restrict i$ lies on $q\restrict i$,
\item $(q\restrict i)\mid(\sigma\restrict i)\leq\varphi(\sigma)\restrict i$,
\item  $\varphi(\sigma)\restrict i\leq (q\restrict i)\mid(\sigma\restrict i)$.
\end{enumerate}
Suppose that $G\subseteq \p_i$ is a $V$-generic filter containing $(q\restrict i)\mid(\sigma\restrict i)$. Then also, by assumption, $\varphi(\sigma)\restrict i\in G$. By definition of $q$, we have $$(q(i)_G)_{\sigma(i)}=\varphi(\sigma)(i)_G.$$ So $\sigma\restrict i+1$ lies on $q\restrict i+1$ and we have
$$(q\restrict i+1)\mid(\sigma\restrict i+1)\leq \varphi(\sigma)\restrict i+1\text{ and }\varphi(\sigma)\restrict i+1\leq (q\restrict i+1)\mid(\sigma\restrict i+1).$$

Now suppose that $\tau:n\to\ex{\lt\omega}2$ lies on $(q,F)$. By definition of $q$, it follows that $\tau(0)=\sigma(0)$ for some $\sigma\in X_F$. So suppose inductively that for some $i<n$, there is $\sigma\in X_F$ such that $\tau\restrict i=\sigma\restrict i$. Since $\tau$ lies on $q$, it follows that $(q\restrict i)\mid (\sigma\restrict i)\forces \tau(i)\in q(i)$. But then there is some $\sigma'$ such that $\sigma\restrict i=\sigma'\restrict i$ and $\tau(i)=\sigma'(i)$. So in the last step, we will obtain $\sigma\in X_F$ such that $\sigma=\tau$.
\end{proof}

We shall call the condition $q$ constructed in Proposition~\ref{prop:sigmaConditions} the \emph{amalgamation} of $\varphi$.
\begin{proposition}
Suppose $(p,F)$ is determined and $q$ is the amalgamation of the $X_F^p$-assignment $\varphi_p$. Then $q\leq p$ and $p\leq q$.
\end{proposition}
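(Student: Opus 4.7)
The plan is to prove, by induction on $i \le n$, that $q\restrict i$ and $p\restrict i$ extend each other in $\p_i$; taking $i = n$ then yields both halves of the conclusion. The essential input will be the observation recorded immediately after the definition of a determined pair $(p,F)$, which asserts that for every $\sigma \in X_F^p$ and every $i < n$,
\[
(p\mid\sigma)\restrict i \;\forces\; p(i) \;=\; \bigcup\bigl\{(p\mid\sigma')(i) : \sigma' \in X_F^p,\ \sigma'\restrict i = \sigma\restrict i\bigr\}.
\]
Informally, $p$ is already equal to the amalgamation of its own pieces $p\mid\sigma$, and by Proposition~\ref{prop:sigmaConditions} the condition $q$ is constructed to be exactly that amalgamation, so the two ought to be equivalent.

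For the base case $i = 0$, the hypothesis $(p\mid\sigma)\restrict 0$ is trivial and the constraint $\sigma'\restrict 0 = \sigma\restrict 0$ is vacuous, so the observation directly yields $p(0) = \bigcup_{\sigma' \in X_F^p}(p\mid\sigma')(0)$, which is $q(0)$ by construction.

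For the inductive step I would assume $q\restrict i$ and $p\restrict i$ are equivalent, fix a $V$-generic $G \subseteq \p_i$ containing both, and compare $p(i)_G$ with $q(i)_G$. The construction of $q(i)$ in Proposition~\ref{prop:sigmaConditions} gives
\[
q(i)_G \;=\; \bigcup\bigl\{(p\mid\sigma)(i)_G : \sigma \in X_F^p,\ (p\mid\sigma)\restrict i \in G\bigr\}.
\]
The restrictions $(p\mid\sigma')\restrict i$ form an antichain as $\sigma'\restrict i$ varies (at the first coordinate $j<i$ where the restrictions disagree they decide disjoint subtrees), and together they are maximal below $p\restrict i$: given $r \le p\restrict i$, extend it to $r^+ \in \p_n$ by setting $r^+(j) = p(j)$ for $j \ge i$, apply the maximal-antichain statement for $\{p\mid\sigma\}$ below $p$ in $\p_n$ from the proposition just before Proposition~\ref{prop:InsideDenseOpenSet}, and then restrict back to $\p_i$. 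Genericity therefore picks out a single equivalence class $\{\sigma' \in X_F^p : \sigma'\restrict i = \sigma\restrict i\}$ for some fixed $\sigma$, which is precisely the index set contributing to $q(i)_G$. Applying the observation in $V[G]$ to that $\sigma$ gives $p(i)_G$ equal to the same union, so $q(i)_G = p(i)_G$, and hence $q\restrict(i+1)$ and $p\restrict(i+1)$ extend each other.

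I do not anticipate a substantive obstacle; the argument is a bookkeeping induction driven entirely by the two ingredients above. The only place requiring any care is identifying, inside $V[G]$, the indexing sets that contribute to $q(i)_G$ and to $p(i)_G$ and verifying that they coincide.
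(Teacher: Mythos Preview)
Your proposal is correct and follows essentially the same approach as the paper: an induction on $i$ showing $p(i)_G = q(i)_G$ in any generic extension by $\p_i$, using the observation after the definition of ``determined'' to compute $p(i)_G$ and the construction of $q$ from Proposition~\ref{prop:sigmaConditions} to compute $q(i)_G$, both as the same union indexed by $\{\sigma' : \sigma'\restrict i = \sigma\restrict i\}$. Your derivation of the maximal-antichain property of the $(p\mid\sigma)\restrict i$ below $p\restrict i$ via extending to $\p_n$ is slightly more explicit than the paper, which simply asserts it.
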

\begin{proof}
Clearly $p(0)=q(0)$. So let's suppose inductively that for some $i<n$, we have $q\restrict i\leq p\restrict i$ and $p\restrict i\leq q\restrict i$. We will argue that both $p\restrict i$ and $q\restrict i$ force $p(i)=q(i)$. So suppose $G\subseteq \p_i$ is a $V$-generic filter containing $p\restrict i$, and hence also $q\restrict i$. Since the conditions $(p\mid\sigma)\restrict i$ for $\sigma\in X_F^p$ form a maximal antichain below $p\restrict i$, it follows that one such $(p\mid\sigma)\restrict i$ is in $G$. Thus, $p(i)_G$ is the union of the $(p\mid\sigma'(i))_G$ for $\sigma'\in X_F^p$ with $\sigma'\restrict i=\sigma\restrict i$. Similarly, there is some $\tau\in X_F^p$ such that $(q\mid\tau)\restrict i\in G$, so we must have $\tau\restrict i=\sigma\restrict i$.  Also, $q(i)_G$ is the union of the $(q\mid\sigma'(i))_G$ for $\sigma'\in X_F^p$ with $\sigma'\restrict i=\sigma\restrict i$. Now, by Proposition~\ref{prop:sigmaConditions}, $(q\mid\sigma)\restrict i\forces q\mid\sigma'(i)=p\mid\sigma'(i)$ for all $\sigma'\in X_F^p$ with $\sigma'\restrict i=\sigma\restrict i$. So $p(i)_G=q(i)_G$.
\end{proof}
Let us say that a map $\tau:n\to\ex{\lt\omega}2$ \emph{extends} another map $\sigma:n\to\ex{\lt\omega}2$ if $\tau(i)$ extends $\sigma(i)$ for every $i<n$. Now we would like to argue that given a determined condition $(q,F)$ and strengthenings $q_\sigma\leq q\mid\sigma$ for every $\sigma$ lying on $(q,F)$, we can obtain a determined condition $(\bar q,\bar F)\leq (q,F)$ such that for every $\sigma$ lying on $(q,F)$, there is some $\tau$ lying on $(\bar q,\bar F)$ extending $\sigma$ with $\bar q\mid \tau\leq q_\sigma$. Indeed, we will get the following stronger statement.
\begin{proposition}\label{prop:strengthenDependentAssigment}
Suppose $(q,F)\in \q(\p_n)$ is determined and for every $\sigma$ which lies on $(q,F)$, there is a finite set $\mathscr X_\sigma$ of conditions $p\leq q\mid\sigma$. Then there is a condition $(\bar q,\bar F)\leq (q,F)$, also determined, such that for every $\sigma$ which lies on $(q,F)$ and $p\in \mathscr X_\sigma$, there is $\tau$ which lies on $(\bar q,\bar F)$ and extends $\sigma$ having $\bar q\mid\tau\leq p$. Moreover, for every $\tau$ which lies on $(\bar q,\bar F)$, $\bar q\mid\tau\leq p$ for some $p\in \mathscr X_\sigma$ with $\tau$ extending $\sigma$.
\end{proposition}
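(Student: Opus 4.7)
The plan is to prove this by induction on $n$, using Proposition~\ref{prop:sigmaConditions} at each stage to amalgamate a carefully coordinated family of strengthenings into the refined condition. Intuitively, each $\sigma$ lying on $(q,F)$ will be split into one $\tau$ for each element of $\mathscr X_\sigma$, and the main work is to keep the resulting assignment consistent across the iterated-forcing name structure. I will maintain inductively the additional strengthening that the resulting correspondence $(\sigma,p)\mapsto\tau$ is bijective, which makes the inductive step substantially cleaner.

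For the base case $n=1$, $(q,F)$ reduces to a perfect tree $q(0)$ together with a level $F(0)$. For each $\sigma$ lying on $(q,F)$ and each $p\in\mathscr X_\sigma$, I pick a node $\tau_{\sigma,p}\in p(0)$ on a sufficiently high common level $\bar F(0)$ extending $\sigma(0)$, all chosen distinct; since each $p(0)\leq(\ex{\lt\omega}2)_{\sigma(0)}$ is a perfect tree, there are arbitrarily many such extensions at high enough levels. Setting $\bar q(0):=\bigcup p(0)_{\tau_{\sigma,p}}$ produces an element of $\p_0$ by closure under finite unions, and $(\bar q,\bar F)$ satisfies the proposition with the bijective correspondence $(\sigma,p)\mapsto\tau_{\sigma,p}$.

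For the inductive step from $n$ to $n+1$, I apply the inductive hypothesis to $(q\restrict n,F\restrict n)\in\q(\p_n)$ using the finite sets $\mathscr Y_{\sigma^-}:=\{p\restrict n:(\sigma,p)\text{ satisfies }\sigma\restrict n=\sigma^-\text{ and }p\in\mathscr X_\sigma\}$ for each $\sigma^-$ lying on $(q\restrict n,F\restrict n)$; this yields $(\bar q^-,\bar F^-)\leq(q\restrict n,F\restrict n)$ together with a bijection $(\sigma^-,p^-)\mapsto\tau^-_{\sigma^-,p^-}$. For each $\tau^-$ lying on $(\bar q^-,\bar F^-)$, let $R_{\tau^-}$ be the set of pairs $(\sigma,p)$ in the original data with $(\sigma\restrict n,p\restrict n)=(\sigma^-,p^-)$, where $(\sigma^-,p^-)$ is the pair bijectively corresponding to $\tau^-$. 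Enumerating $R_{\tau^-}$, I work in $\p_n$ below $\bar q^-\mid\tau^-$ and successively strengthen to decide, for each $(\sigma,p)\in R_{\tau^-}$, a distinct node $\tau_{\sigma,p}(n)\in p(n)$ on a common high enough level $\bar F(n)$ extending $\sigma(n)$; this is possible at each step because the previous condition forces $p(n)$ to be a perfect tree in $(\ex{\lt\omega}2)_{\sigma(n)}$, so infinitely many nodes are available at every sufficiently high level. Call the final strengthening $\tilde c_{\tau^-}$; as each strengthening is carried out, I propagate its effect on lower coordinates to every $\tau^-_1$ sharing the relevant prefix with $\tau^-$, so that the map $\tau^-\mapsto\tilde c_{\tau^-}$ remains an $X$-assignment throughout. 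Applying Proposition~\ref{prop:sigmaConditions} to this assignment yields $\tilde q^-\leq\bar q^-$ with $\tilde q^-\mid\tau^-=\tilde c_{\tau^-}$; I then set $\bar q\restrict n:=\tilde q^-$ and define $\bar q(n)$ to be the $\p_n$-name forced by each $\tilde c_{\tau^-}$ to equal the finite union $\bigcup_{(\sigma,p)\in R_{\tau^-}} p(n)_{\tau_{\sigma,p}(n)}$, which lies in $\dot\q_n$ by the forced closure of a perfect poset under finite unions. Setting $\tau_{\sigma,p}:=\tau^-_{\sigma^-,p^-}\concat\tau_{\sigma,p}(n)$ extends the bijection and completes the step.

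I expect the main obstacle to be the bookkeeping needed to maintain $X$-assignment consistency across the strengthenings: strengthening $\tilde c_{\tau^-}$ in $\p_n$ to decide a node of $p(n)$ can cascade down to strengthen lower coordinate names, and these cascades must be propagated uniformly to every $\tilde c_{\tau^-_1}$ sharing a prefix with $\tau^-$, so that the assignment property survives. Once this coordination is in place, the remaining verifications---that $(\bar q,\bar F)$ is determined, that the name $\bar q(n)$ coheres across the antichain of $\tilde c_{\tau^-}$'s, and that both the existence and ``moreover'' clauses hold---follow the amalgamation pattern already established in Proposition~\ref{prop:sigmaConditions}.
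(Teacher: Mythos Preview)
Your inductive strategy is essentially the one the paper uses, and the outline is correct. The base case and the reduction of the $(n{+}1)$-step to the inductive hypothesis on the first $n$ coordinates match the paper's argument; your additional bijectivity bookkeeping is harmless, though the paper does not bother with it.

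The one place where your write-up diverges from the paper is exactly the step you flag as ``the main obstacle'': after obtaining $(\bar q^-,\bar F^-)$ and then strengthening each $\bar q^-\mid\tau^-$ to some $\tilde c_{\tau^-}$ that decides the required nodes of the various $p(n)$, you need to reassemble these strengthenings into a single condition in $\q(\p_n)$. You propose to do this by hand, propagating each strengthening across all $\tau^-_1$ sharing a prefix so that the family stays an $X_{\bar F^-}$-assignment. This can be made to work, but the successive propagations interact (strengthening for $\tau^-_2$ feeds back into coordinates already fixed for $\tau^-_1$), and you have not actually shown that the process terminates in a consistent assignment with a common level $\bar F(n)$. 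The paper sidesteps this entirely: having produced the individual strengthenings $r'_\rho\leq r\mid\rho$, it simply applies the inductive hypothesis a \emph{second} time to $(r,J)$ with the singleton sets $\mathscr Y_\rho=\{r'_\rho\}$, which immediately yields a determined $(r',J')\leq(r,J)$ in which below each $\rho$ sits some $\tau$ with $r'\mid\tau\leq r'_\rho$. This second invocation of the induction is the clean replacement for your propagation argument; once you see it, the remaining amalgamation via Proposition~\ref{prop:sigmaConditions} goes through exactly as you describe.
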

\begin{proof}
First, suppose $n=1$. Let $(T,n)$ be a condition in $\q(\p)$ and suppose that for every node $t$ on level $n$ of $T$, we have a finite set $\mathscr X_t$ of trees $S\leq T_t$. Let $T'$ be the tree we get by replacing $T_t$ with the union of $S$ in $\mathscr X_t$ for every $t$ on level $n$ of $T$. Fix $t$ on level $n$ of $T'$. Find a large enough level $n_t\geq n$ such that for every $s$ on level $n _t$ of $T'$, we can thin out $T'_s$ to $P^s$ with the property that $P^s\leq S$ for some $S\in \mathscr X_t$, and moreover for every $S\in \mathscr X_t$, there is some node $s$ such that $P^s\leq S$. Let $\bar T$ be the tree we obtain by replacing $T'_s$ with $P^s$ in $T'$. Let $\bar n\geq n_t$ for every $t$ on level $n$ of $T'$. Clearly the condition $(\bar T,\bar n)$ has all the desired properties.

Next, let's consider the case $n=2$. Let $(q,F)$ be a condition in $\q(\p_2)$ and suppose that for every $\sigma$ which lies on $(q,F)$, we have a finite set $\mathscr X_\sigma$ of conditions $p\leq q\mid\sigma$. For every node $t$ on level $F(0)$ of $q(0)$, let $\mathscr X_t$ be the set of all trees $p(0)$ with $p\in \mathscr X_\sigma$ for $\sigma(0)=t$. Using the case $n=1$, let $(\bar T,\bar n)$ be the condition for the sets $\mathscr X_t$. Fix a node $s$ on level $\bar n$ of $\bar T$ and let $t_s$ be the node on level $F(0)$ of $\bar T$ which $s$ extends. Since $(q,F)$ was determined, $\bar T_s$ decides the $F(1)$-th level of $q(1)$. So let $\dot T^s$ be a $\p_1$-name for the tree we get by replacing, for every node $t$ on level $F(1)$ of $q(1)$, each $q(1)_t$  with the union of $p(1)$ for $p\in \mathscr X_\sigma$ with $\sigma=\{(0,t_s),(1,t)\}$ and $\bar T_s\leq p(0)$. Strengthen $\bar T_s$ to $P^s$ deciding a level $m_s\geq F(1)$ such that for every node $u$ on level $m_s$ of $\dot T^s$ above a node $t$ on level $F(1)$, we can thin out $\dot T^s_u$ to $\dot P^{s,u}$ such that $\dot P^{s,u}\leq p(1)$ for some $p\in \mathscr X_\sigma$ with $\sigma=\{(0,t_s),(1,t)\}$ and $\bar T_s\leq p(0)$, and moreover for every such $p(1)$, there is some node $u$ such that $\dot P^{s,u}\leq p(1)$. We can assume, with some more thinning out, that all $m_s=m$ for a fixed $m$. Let $\bar T'$ be the tree we get by replacing each $\bar T_s$ with $P^s$ in $\bar T$. Let $\dot T^{'s}$ be a $\p_1$-name for the tree we get by replacing, for every node $u$ on level $m$ of $\dot T^s$, each $\dot T^s_u$ with $\dot P^{s,u}$. Let $\bar F=\{(0,\bar n),(1,m)\}$. Let $X_{\bar F}$ consist of $\sigma:2\to\ex{\lt\omega}2$ such that $\sigma(0)=s$ on level $\bar n$ of $\bar T'$ and $\sigma(1)=u$ on level $m$ of $\dot T^{'s}$. Let $\varphi$ be the $X_{\bar F}$-assignment such that $\varphi(\sigma)=\{(0,\bar T'_{\sigma(0)}),(1,\dot T^{'\sigma(0)}_{\sigma(1)})\}$. Let $\bar q$ be the amalgamation of $\varphi$. Clearly the condition $(\bar q,\bar F)$ has all the desired properties.

Finally, assuming that the statement holds for $n$, let's argue that it holds for $n+1$ by mimicking how the argument moves from $n=1$ to $n=2$. Let $(q,F)$ be a condition in $\q(\p_{n+1})$ and suppose that for every $\sigma$ which lies on $(q,F)$, we have a finite set $\mathscr X_\sigma$ of conditions $p\leq q\mid\sigma$. For every $\tau$ which lies on $(q\restrict n,F\restrict n)$, let $\mathscr X_\tau$ be the set of all conditions $p\restrict n$ with $p\in \mathscr X_\sigma$ for $\sigma\restrict n=\tau$. Using the inductive hypothesis for $n$, let $(r,J)$ be the condition for the sets $\mathscr X_\tau$. Fix $\rho$ lying on $(r,J)$, and find $\tau_\rho$ which lies on $(q\restrict n,F\restrict n)$ such that $\rho$ extends $\tau_\rho$. Since $(q,F)$ was determined, $r\mid\rho$ decides the $F(n)$-th level of $q(n)$. So let $\dot T^\rho$ be a $\p_n$-name for the tree we get by replacing, for every node $t$ on level $F(n)$ of $q(n)$, each $q(n)_t$ with the union of $p(n)$ for $p\in \mathscr X_\sigma$ with $\sigma=\tau_\rho\union\{(n,t)\}$ and $r\mid\rho\leq p\restrict n$. Strengthen $r\mid \rho$ to $r'_\rho$ deciding a level $m_\rho\geq F(n)$ such that for every node $u$ on level $m_\rho$ of $\dot T^\rho$ above a node $t$ on level $F(n)$, we can thin out $\dot T^\rho_u$ to $\dot P^{\rho,u}$ such that $\dot P^{\rho,u}\leq p(n)$ for some $p\in \mathscr X_\sigma$ with $\sigma=\tau_\rho\union\{(n,t)\}$ and $r\mid\rho\leq p\restrict n$, and moreover for every such $p(n)$, there is some node $u$ with $\dot P^{\rho,u}\leq p(n)$. We can assume, with some more thinning out, that all $m_\rho=m$ for a fixed $m$. Let $(r',J')$ be the condition we get by applying the inductive hypothesis to $(r,J)$ and $\mathscr Y_\rho=\{r'_\rho\}$. Let $\dot T^{'\rho}$ a $\p_n$-name for the tree we get by replacing, for every node $u$ on level $m$ of $\dot T^\rho$, each $\dot T^\rho_u$ with $\dot P^{\rho,u}$.  Let $\bar F=J'\union\{(n,m)\}$. Let $X_{\bar F}$ consist of $\sigma:n+1\to\ex{\lt\omega}2$ such that $\sigma=\tau\union\{( n,u)\}$ for some $\tau$ which lies on $(r',J')$ with $\tau$ extending $\rho$ lying on $(r,J)$ and $u$ on level $m$ of $\dot T^{'\rho}$. Let $\varphi$ be the $X_{\bar F}$-assignment such that $\varphi(\sigma)=r'\mid(\sigma\restrict n)\union\{( n,T^{'\rho}_{\sigma(n)})\}$, where $\sigma\restrict n$ extends $\rho$ lying on $(r,J)$. Let $\bar q$ be the amalgamation of $\varphi$. Clearly the condition $(\bar q,\bar F)$ has all the desired properties.
\end{proof}
\section{Growing finite iterations of perfect posets}\label{sec:GrowingIterations}

In the construction of Jensen's poset $\p^J$, at nontrivial stages $\alpha$, we used the $\omega$-many perfect trees obtained from a partially generic filter for $\q(\p_\alpha)^{\lt\omega}$ to grow the perfect poset $\p_\alpha$ to $\p_{\alpha+1}$. What we would like to do now is to find an appropriate generalization of this construction for growing a finite iteration $\p_n$ of perfect posets using partially generic filters for the associated fusion poset $\q(\p_n)$. More precisely, we would like the following.

Given a finite iteration $\p_n=\q_0*\dot \q_1*\cdots*\dot \q_{n-1}$ of perfect posets, we would like to be able to extend it to a finite iteration $\p_n^*=\q_0^**\dot\q_1^**\cdots*\dot\q_{n-1}^*$ of perfect posets, constructed in a generic extension of some suitable model $M$, with the following properties:
\begin{enumerate}
\item $\q_0\subseteq \q_0^*$,
\item For all $1\leq i<n$, $\one_{\p_i^*}$ forces that $\dot\q_i$ is a perfect poset and $\dot\q_i^*$ extends it.
\item $\p_n\subseteq \p_n^*$.
\item Every maximal antichain $\mathcal A\in M$ of $\p_n$ remains maximal in $\p_n^*$.
\end{enumerate}
\noindent The next theorem from \cite{abraham:jensenRealsIterations} holds the main idea for constructing $\p_n^*$. The set-up for the theorem is left intentionally vague with the details forthcoming in the next section.

Fix a suitable model $M$ with $\p_n\in M$. We carry out the construction of $\p_n^*$ in $n$-steps, at each step $i$, constructing a $\p_i^*$-name $\dot\q_i^*$ for a perfect poset extending $\dot\q_i$. We extend $\q_0$ to $\q_0^*$ as before in a (carefully chosen) forcing extension of $M$ by $\q(\q_0)^{\lt\omega}$. So suppose inductively that we already extended $\p_i$ to $\p_i^*$ satisfying requirements (1)-(4). Thus, in particular, every maximal antichain $\mathcal A\in M$ of $\p_i$ remains maximal in $\p_i^*$, and so every $V$-generic filter $H^*$ for $\p_i^*$ restricts to an $M$-generic filter $H$ for $\p_i$. From this it follows that $\dot \q_i$ is forced to be a perfect poset by $\p_i^*$ because $M[H]$ satisfies that $(\dot \q_i)_H$ is a perfect poset, and this statement is clearly absolute between $M[H]$ and $V[H^*]$. Fix a carefully chosen $M$-generic filter $G\subseteq \q(\p_{i+1})$. Let $\tau(G)$ be a $\p_i^*$-name for a subset of $\q(\dot \q_i)$ such that in any forcing extension $V[H^*]$ by $\p_i^*$ $\tau(G)$ gets interpreted as the collection of all pairs $(p(i)_{H^*},F(i))$ for $(p,F)\in G$. Provided that the poset $\p_i^*$ contains a kind of master condition for $G$, we will be able to conclude that $\tau(G)$ is $M[H]$-generic for $\q((\dot\q_i)_H)$, where $H$ is the restriction of the $V$-generic filter $H^*\subseteq \p_i^*$.
\begin{theorem}\label{th:genericCondition}
Suppose $\bar p\in \p^*_i$ is such that for every $(p,F)\in G$, $\bar p\leq p\restrict i$. Then
\begin{center}
$\bar p\forces ``\tau(G)$ is an $M[\dot H]$-generic filter for $\q(\dot\q_i)$",
\end{center}
 where $\dot H$ is the canonical name for the restriction of the generic filter to $\p_i$.
\end{theorem}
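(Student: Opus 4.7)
The plan is to verify in any $V$-generic extension $V[H^*]$ containing $\bar p$, writing $H$ for the restriction of $H^*$ to $\p_i$, that the set $\tau(G)_{H^*}=\{(p(i)_H,F(i)):(p,F)\in G\}$ is directed in $\q((\dot\q_i)_H)$ and meets every dense set of $\q((\dot\q_i)_H)$ coded in $M[H]$. The pivotal observation is that the hypothesis $\bar p\leq p\restrict i$ for every $(p,F)\in G$, combined with $\bar p\in H^*$, forces $p\restrict i\in H$ for every $(p,F)\in G$, so that every $\p_i$-name $p(i)$ can be legitimately evaluated at $H$.

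For directedness, given $(p_1,F_1),(p_2,F_2)\in G$, the filter $G$ produces a common extension $(p_3,F_3)\in G$. Unpacking the ordering on $\q(\p_{i+1})$, one has $p_3\restrict i\leq p_k\restrict i$ and $p_3\restrict i\forces p_k(i)\cap \ex{F_k(i)}2=p_3(i)\cap\ex{F_k(i)}2$ for $k=1,2$; since $p_3\restrict i\in H$, evaluating at $H$ yields $(p_3(i)_H,F_3(i))\leq (p_k(i)_H,F_k(i))$ in $\q((\dot\q_i)_H)$. The upward closure of $\tau(G)_{H^*}$ is then a bona fide filter in $V[H^*]$.

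For genericity, fix a dense $\mathcal D\in M[H]$ of $\q((\dot\q_i)_H)$, a $\p_i$-name $\dot{\mathcal D}\in M$ for it, and a $p_0\in H$ forcing $\dot{\mathcal D}$ dense in $\q(\dot\q_i)$. In $M$, define
\[
E=\{(p,F)\in\q(\p_{i+1}): p\restrict i\perp p_0,\ \text{or}\ p\restrict i\leq p_0\ \text{and}\ p\restrict i\forces (p(i),\check{F(i)})\in\dot{\mathcal D}\}.
\]
I will show $E$ is dense in $\q(\p_{i+1})$: given $(q,F')$ with $q\restrict i$ compatible with $p_0$, strengthen $q\restrict i$ below $p_0$, apply the maximum principle inside $M$ to the forced existential "$\exists (r,k)\in\dot{\mathcal D}\,(r,k)\leq(q(i),\check{F'(i)})$" to extract a $\p_i$-name $(\dot r,\dot k)$, then strengthen further to decide $\dot k=\check m\in\omega$ and to turn $(\dot r,\check m)$ into the $i$-th component of an honest condition in $\q(\p_{i+1})$; the level function on coordinates $<i$ is raised using the determined-condition machinery of the previous section to obtain an element of $E$ below $(q,F')$. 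By $M$-genericity of $G$, there is $(p,F)\in G\cap E$. Since $p\restrict i\in H$, it is compatible with $p_0$, putting us in the second clause, so $(p(i)_H,F(i))\in\mathcal D\cap \tau(G)_{H^*}$.

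The main obstacle I anticipate is precisely the density proof for $E$: turning a forced existential about $\dot{\mathcal D}$ into a ground-model condition of $\q(\p_{i+1})$ requires simultaneously invoking the maximum principle, strengthening to decide an integer (so $F(i)$ is a genuine natural number rather than a name), and repairing the level data on coordinates below $i$ via determined conditions and amalgamations from the previous section. Once $E$ is dense, the rest—verifying the filter property and pushing "$(p(i),F(i))\in\dot{\mathcal D}$" through the evaluation at $H$—is a routine unfolding of names made available by $p\restrict i\in H$.
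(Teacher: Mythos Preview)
Your proposal is correct and follows essentially the same route as the paper's proof: verify directedness of $\tau(G)_{H^*}$ by pulling a common refinement from $G$, then for genericity define a dense set $E\subseteq\q(\p_{i+1})$ in $M$ whose intersection with $G$ yields the desired element of $\mathcal D$, with the density of $E$ established via the determined-condition machinery to decide the integer $\dot k$ while respecting the $\q(\p_i)$-ordering on the first $i$ coordinates.

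Two cosmetic differences are worth noting. First, the paper avoids your incompatibility clause in $E$ by choosing the name $\dot D$ so that $\one_{\p_i}\forces``\dot D$ is dense open in $\q(\dot\q_i)$'' (always possible by mixing with the full poset over an antichain extending $p_0$); this streamlines the density argument slightly but is otherwise equivalent to your formulation. Second, the paper proves that $\tau(G)_{H^*}$ is itself upward closed rather than passing to its upward closure: given $(p(i)_H,F(i))\leq(T,b)$, one writes $T=\dot T_H$ for some $\p_i$-name forced by $\one$ to be suitably related to $p(i)$, and then $(p\restrict i\cup\{(i,\dot T)\},\,F\restrict i\cup\{(i,b)\})$ lies above $(p,F)$ in $G$ and produces $(T,b)\in\tau(G)_{H^*}$. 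Since the theorem asserts $\tau(G)$ is literally a filter, you should either include this step or remark that $\tau(G)$ is understood as the filter generated.
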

\begin{proof}
Suppose $H^*\subseteq \p^*_i$ is $V$-generic with $\bar p\in H^*$. Let $H$ be the restriction of $H^*$ to $\p_i$. Let $$K=\tau(G)_{H^*}=\{(p(i)_{H^*},F(i))\mid p\in G\}$$ and $\q_i=(\dot\q_i)_{H}$. First, we argue that $K$ is a filter on $\q(\q_i)$. Suppose for $(p,F)\in G$, $(p(i)_{H},F(i))\leq (T,b)$ in $\q(\q_i)$, so that $b\leq F(i)$ and $p(i)_{H}\cap\ex{b}2=T\cap\ex{b}2$. It follows that there is a $\p_i$-name $\dot T$ for $T$ such that
\begin{center}
$\one_{\p_i}\forces ``p(i)\leq \dot T\text{ and } p(i)\cap\ex{b}2=\dot T\cap\ex{b}2"$.
\end{center}
Let $p'=p\restrict i\union \{(i,\dot T)\}$ and $F'=F\union\{(i,b)\}$. Clearly $(p,F)\leq (p',F')$, which means that $(p',F')\in G$. It follows that $p'(i)_{H}=T$, and so $(T,b)\in K$.

Next, we fix $(p,F)$ and $(p',F')$ both in $G$ and argue that $(p(i)_{H},F(i))$ and $(p'(i)_{H},F'(i))$ are compatible in $K$. There is $(q,J)\in G$ below both $(p,F)$ and $(p',F')$. It follows that $J(i)\geq F(i),F'(i)$, $q\restrict i\forces_{\p_i} q(i)\leq p(i),p'(i)$,
\begin{center}
$q\restrict i\forces_{\p_i} ``q(i)\cap \ex{F(i)}2=p(i)\cap\ex{F(i)}2\text{ and }q(i)\cap \ex{F'(i)}2=p'(i)\cap\ex{F'(i)}2"$,
\end{center}
and $(q(i)_{H},J(i))\in K$. Since $\bar p\leq q\restrict i$ and $\bar p\in H^*$, we have $q\restrict i\in H^*$. Now observe that $q\restrict i$ must force the statements mentioned above over $\p^*_i$ as well because any $V$-generic filter for $\p_i^*$ restricts to an $M$-generic filter for $\p_i$ and the statements in question are absolute.
Thus, $$q(i)_{H}\leq p(i)_{H},p'(i)_{H}$$ and $$q(i)_{H}\cap \ex{F(i)}2=p(i)_{H}\cap\ex{F(i)}2\text{ and }q(i)_{H}\cap \ex{F'(i)}2=p'(i)_{H}\cap\ex{F'(i)}2.$$ So $(q(i)_{H},J(i))\leq (p(i)_{H},F(i)),\,(p'(i)_{H},F'(i))$.

Finally, we have to see that $K$ is $M[H]$-generic. So suppose $D\in M[H]$ is dense open in $\q(\q_i)$. Let $\dot D\in M$ be a $\p_i$-name for $D$ such that
\begin{center}
$\one_{\p_i}\forces_{\p_i} ``\dot D$ is dense open in $\q(\dot \q_i)"$.
\end{center}
In $M$, define $$E=\{(p,F)\in \q(\p_{i+1})\mid p\restrict i\forces (p(i),F(i))\in \dot D\}.$$
We claim that $E$ is dense open in $\q(\p_{i+1})$. It is easy to see that $E$ is open, so let's argue that it is dense.
Fix some $(q,J)\in \q(\p_{i+1})$ and assume without loss that $(q,J)$ is determined. There must be some pair $(\dot T,\dot k)$ such that $\dot T$ is a $\p_i$-name for an element of $\dot\q_i$, $\dot k$ is a $\p_i$-name for a natural number, and $$q\restrict i\forces ``(\dot T,\dot k)\in\dot D\text{ and }(\dot T,\dot k)\leq (q(i),J(i))".$$ The set of conditions which decide the value of $\dot k$ is dense open below $q\restrict i$ in $\p_i$. So, by Proposition~\ref{prop:InsideDenseOpenSet}, there a determined condition $(p',F')\leq (q\restrict i,J\restrict i)$ in $\q(\p_i)$ such that for every $\sigma$ which lies on it, $p'\mid \sigma$ decides that $\dot k=k(\sigma)$. Let $k\in\omega$ be above all the $k(\sigma)$. Then $$p'\forces ``(\dot T, k)\leq (\dot T,\dot k)\leq (q(i),J(i))\text{ and }(\dot T,k)\in \dot D".$$ Let $p=p'\union\{(i,\dot T)\}$ and $F=F'\union\{(i,k)\}$. Clearly $(p,F)\in E$. Let's argue that $(p,F)\leq (q,J)$. By construction $p\leq q$ and each $k(\sigma)\geq J(i)$, so $k\geq J(i)$. Finally, $p'=p\restrict i\forces (\dot T,k)\leq (q(i),J(i))$ and so $p\restrict i\forces \dot T\cap \ex{J(i)}2=q(i)\cap \ex{J(i)}2$.

Fix some $(p,F)\in E\cap G$. Since $\bar p\leq p\restrict i$, it follows that $p\restrict i\in H^*$. Thus, $(p(i)_{H},F(i))\in D\cap K$, completing the argument that $K$ is an $M[H]$-generic filter for $\q(\q_i)$.
\end{proof}
Next, we are going to obtain a stronger version of Theorem~\ref{th:genericCondition} that tells us how to get an $M[H]$-generic filter for $\q(\q_i)^{\lt\omega}$, which is really what we need to extend $\dot \q_i$ to $\dot \q_i^*$. For this, we will need to enlarge our fusion poset $\q(\p_{i+1})$. Let $\bar\q_i(\p_{i+1})$ be the following modification of $\q(\p_{i+1})$. Conditions in $\bar\q(\p_{i+1})$ are pairs $(p,F)$ such that $(p\restrict i,F\restrict i)\in \q(\p_i)$, $p(i)$ is some finite tuple $\la \dot T_0,\dot T_1,\ldots,\dot T_{k-1}\ra$ with $p\restrict i\forces \dot T_j\in \dot\q_i$ for all $j<k$, and $F(i)=f:k\to\omega$. The ordering is $(p',F')\leq (p,F)$ whenever
\begin{enumerate}
\item $(p'\restrict i,F'\restrict i)\leq (p\restrict i, F\restrict i)$, and \\
for $j<k$,
\item $F'(i)(j)\geq F(i)(j)$,
\item $p'\restrict i\forces p'(i)(j)\cap \ex{F(i)(j)}2=p(i)(j)\cap \ex{F(i)(j)}2$.
\end{enumerate}
The point is that if $H^*\subseteq \p_i^*$ is $V$-generic, then $$\la ((\dot T_0)_{H^*},f(0)),\ldots,((\dot T_{k-1})_{H^*},f(k-1))\ra$$ is a condition in $\q((\dot \q_i)_H)^{\lt\omega}$. Suppose now that $G\subseteq \bar \q(\p_{i+1})$ is $M$-generic. Let $\tau(G)$ be a $\p_i^*$-name for a subset of $\q(\q_i)^{\lt\omega}$  such that in any forcing extension $V[H^*]$ by $\p_i^*$, $\tau(G)$ gets interpreted as the collection of all $$\la ((\dot T_0)_{H^*},f(0)),\ldots,((\dot T_{k-1})_{H^*},f(k-1))\ra$$ for $(p,F)\in G$. If it so happens that $\p_i^*$ has a master condition $\bar p$ for $G$, then $\tau(G)$ will be $M[H]$-generic for $\q((\q_i)_H)^{\lt\omega}$, where $H$ is the restriction of the $V$-generic filter $H^*\subseteq \p_i^*$.
\begin{theorem}\label{th:genericConditionProduct}
Suppose $\bar p\in \p^*_i$ is such that for every $(p,F)\in G$, $\bar p\leq p\restrict i$. Then
\begin{center}
$\bar p\forces ``\tau(G)$ is an $M[\dot H]$-generic filter for $\q(\dot\q_i)^{\lt\omega}$",
\end{center}
where $\dot H$ is the canonical name for the restriction of the generic filter to $\p_i$.
\end{theorem}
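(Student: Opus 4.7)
The plan is to follow the proof of Theorem~\ref{th:genericCondition} line by line, with the only modification that at coordinate $i$ we now deal with finite tuples of $\dot\q_i$-names together with a level function $f:k\to\omega$ rather than a single name and a single level. Let $H^*\subseteq \p_i^*$ be $V$-generic with $\bar p\in H^*$, let $H$ be its restriction to $\p_i$, let $K=\tau(G)_{H^*}$, and write $\q_i=(\dot\q_i)_H$. I will verify that $K$ is a filter on $\q(\q_i)^{\lt\omega}$ and then that $K$ meets every dense open set lying in $M[H]$.

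The filter checks are essentially identical to the ones in Theorem~\ref{th:genericCondition}. For upward closure, given $(p,F)\in G$ whose interpretation in $K$ is dominated by some tuple $\la (T_0,b_0),\ldots,(T_{m-1},b_{m-1})\ra$ in $\q(\q_i)^{\lt\omega}$, I choose $\p_i$-names $\dot T_j$ forced by $\one$ to deliver this dominating tuple, form $(p',F')\in \bar\q(\p_{i+1})$ by replacing $p(i)$ with $\la \dot T_0,\ldots,\dot T_{m-1}\ra$ and $F(i)$ with the associated level function, observe that $(p,F)\leq (p',F')$ (so $(p',F')\in G$), and conclude that the dominating tuple lies in $K$. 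For compatibility of two members of $K$ arising from $(p,F),(p',F')\in G$, a common lower bound $(q,J)\in G$ supplies a refining tuple; the clauses defining the order on $\bar\q(\p_{i+1})$, together with the fact that $q\restrict i\in H^*$ (because $\bar p\leq q\restrict i$) and that the relevant forcing relations are absolute between $M[H]$ and $V[H^*]$, yield the required refinement in $\q(\q_i)^{\lt\omega}$.

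The essential step is genericity. Given $D\in M[H]$ dense open in $\q(\q_i)^{\lt\omega}$, pick a name $\dot D\in M$ forced by $\one_{\p_i}$ to be dense open in $\q(\dot\q_i)^{\lt\omega}$, and in $M$ let $E$ consist of those $(p,F)\in \bar\q(\p_{i+1})$ such that $p\restrict i$ forces that the tuple associated to $(p(i),F(i))$ lies in $\dot D$. I claim $E$ is dense in $\bar\q(\p_{i+1})$; granting this, any $(p,F)\in E\cap G$ combined with $\bar p\leq p\restrict i\in H^*$ places the interpretation of $(p(i),F(i))$ in $D\cap K$. For density, fix $(q,J)\in \bar\q(\p_{i+1})$ with $q(i)=\la \dot S_0,\ldots,\dot S_{k-1}\ra$ and $J(i)=g:k\to\omega$. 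Below $q\restrict i$ in $\p_i$ there are names $\dot T_0,\ldots,\dot T_{k-1}$ and a name $\dot f$ for some $f:k\to\omega$ forced to produce a tuple in $\dot D$ refining the input. The set of conditions deciding each value $\dot f(j)$ is dense open below $q\restrict i$, so by Proposition~\ref{prop:InsideDenseOpenSet} there is a determined $(p',F')\leq (q\restrict i,J\restrict i)$ along which each $\dot f(j)$ takes only finitely many values $k_j(\sigma)$; take $f(j)$ above all of them. Then $(p,F)=(p'\cup\{(i,\la \dot T_0,\ldots,\dot T_{k-1}\ra)\},F'\cup\{(i,f)\})$ lies in $E$ and below $(q,J)$ by exactly the argument of Theorem~\ref{th:genericCondition}, applied coordinatewise.

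The main obstacle is purely bookkeeping: Proposition~\ref{prop:InsideDenseOpenSet} must be applied simultaneously to all $k$ coordinates of the tuple, but since $k$ is finite we may either iterate the application or combine the relevant dense open subsets of $\p_i$, after which the single-coordinate argument of Theorem~\ref{th:genericCondition} goes through with no genuinely new idea.
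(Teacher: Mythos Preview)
Your proposal is correct and follows exactly the approach the paper itself takes: the paper's proof of this theorem is simply the sentence ``The proof is essentially the same as of Theorem~\ref{th:genericCondition},'' and you have spelled out precisely that adaptation. One small bookkeeping point: when refining inside $\dot D$, the witnessing tuple may have support strictly larger than $k$, so the names should be $\dot T_0,\ldots,\dot T_{m-1}$ for some $m\geq k$ (with $m$ itself decided along the determined condition), but this is absorbed by the same finitary argument you already invoke.
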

\noindent The proof is essentially the same as of Theorem~\ref{th:genericCondition}. Using Theorem~\ref{th:genericConditionProduct}, we can let $\dot\q_i^*$ be the canonical $\p_i^*$-name for the extension of $\dot \q_i$ formed in $M[\dot H][\tau(G)]$, where $\dot H$ is the restriction of the generic filter to $\p_i$. In the next section, we will show how to obtain the required $M$-generic filters $G$ so that the inductive assumptions hold for $\p_i^*$.
\section{Tree iterations of perfect posets}\label{sec:treeIterations}
Let's define that an \emph{$\omega$-iteration of perfect posets} is a sequence $$\vec P=\la \p_n\mid n<\omega\ra,$$ where $\p_0=\{\emptyset\}$ is a trivial poset and each $\p_n$, for $n\geq 1$, is a finite iteration of perfect posets with the coherence requirement that for $0<m<n$, $\p_n\restrict m=\p_m$. The initial poset $\p_0$ is included in the sequence to make the subsequent definitions more uniform. For this reason, we will also make the ad hoc definition that $\q(\p_0)=\{\emptyset\}$. Note that an $\omega$-iteration of perfect posets is not itself an iteration, rather it is a coherent sequence of finite iterations.

A \emph{tree iteration} is a non-linear forcing iteration along some tree. Given a tree of height $\omega$, the tree iteration of perfect posets will use an $\omega$-iteration $\vec P=\la \p_n\mid n<\omega\ra$ of perfect posets with conditions assigned to level $n$ nodes of the tree coming from the poset $\p_n$. Conditions will be assigned to the nodes coherently so that if a node $s$ on level $n$ extends a node $t$ on level $m$, then the condition $p$ on node $s$ will be such that $p\restrict m$ is on node $t$.
\begin{definition}
Let $\vec P=\la \p_n\mid n<\omega\ra$ be an $\omega$-iteration of perfect posets and let $\mathscr T$ be a tree of height $\omega$.  A \emph{$\mathscr T$-iteration of perfect posets} is the following partial order $\p(\vec P,\mathscr T)$. Conditions in $\p(\vec P,\mathscr T)$ are functions $f_X$ with domain some finite subtree $X$ of $\mathscr T$ such that:
\begin{enumerate}
\item For every node $s$ on level $n$ of $X$, $f_X(s)\in \p_n$.
\item Whenever $s\leq t$ are two nodes in $X$, then $f_X(t)\restrict \lev(s)=f_X(s)$.
\end{enumerate}
The ordering is $f_Y\leq f_X$ whenever $Y$ extends $X$ and for every node $s\in X$, $f_Y(s)\leq f_X(s)$.
\end{definition}
The analogue of the fusion posets $\q(\p)$ and $\q(\p_n)$ for $\p(\vec P,\mathscr T)$ will be the fusion poset $\q(\vec P,\mathscr T)$ whose conditions  are functions $f_X$ with domain some finite subtree $X$ of $\mathscr T$ such that:
\begin{enumerate}
\item For every node $s$ on level $n$ of $X$, $f_X(s)\in \q(\p_n)$.
\item Whenever $s\leq t$ are two nodes in $X$, with $f_X(s)=(p_s,F_s)$ and $f_X(t)=(p_t,F_t)$, then $p_t\restrict \lev(s)=p_s$ and $F_t\restrict\lev(s)=F_s$.
\end{enumerate}
The ordering is $f_Y\leq f_X$ whenever $Y$ extends $X$ and for every node $s\in X$, $f_Y(s)\leq f_X(s)$.
\begin{proposition}\label{prop:strengthenTreeCondition} $\,$
\begin{enumerate}
\item Suppose $f_X\in \p(\vec P, \mathscr T)$, $f_X(s)=p$, and $q\leq p$. Then there is a condition $g_X\leq f_X$ in $\p(\vec P,\mathscr T)$ with $g_X(s)=q$.
\item Suppose $f_X\in \q(\vec P,\mathscr T)$, $f_X(s)=(p,F)$, and $(q,F')\leq (p,F)$. Then there is $g_X\leq p_X$ such that $g_X(s)=(q,F')$.
\end{enumerate}
\end{proposition}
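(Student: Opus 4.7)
The plan is to give a direct construction that propagates the strengthening at $s$ through the rest of the domain $X$ using the coherence property of the original condition. Write $n=\lev(s)$. For each $u\in X$, I let $m_u$ denote the level of the meet of $u$ and $s$ in the tree $\mathscr T$, with the convention $m_u=0$ when they share no common ancestor (which is harmless because $\p_0=\{\emptyset\}$ is trivial). Coherence of $f_X$ then gives $f_X(u)\restrict m_u=p\restrict m_u$, and my candidate $g_X(u)$ is the element of $\p_{\lev(u)}$ whose first $m_u$ coordinates are those of $q\restrict m_u$ and whose remaining coordinates are those of $f_X(u)$. By construction $g_X(s)=q$; for $u\leq s$ one has $g_X(u)=q\restrict\lev(u)$; and for $s\leq u$ one has $g_X(u)$ equal to $q$ followed by the original tail of $f_X(u)$.

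Three things then need verification. First, I would confirm that each $g_X(u)$ really is a condition in $\p_{\lev(u)}$: the coordinates of $f_X(u)$ past level $m_u$ are $\p_{m_u}$-names forced by $p\restrict m_u$ to name conditions in the relevant iterand, and since $q\restrict m_u\leq p\restrict m_u$, substituting the stronger condition at the front still yields a valid iterate. Second, I would verify that $g_X$ inherits the tree-coherence requirement $g_X(v)\restrict\lev(u)=g_X(u)$ whenever $u\leq v$ in $X$. I expect to handle this by a short case analysis on the relative positions of $u$, $v$ and $s$ (each is either an ancestor of $s$, a descendant of $s$, or incomparable to $s$), with each case reducing to the coherence of $f_X$ together with the elementary observation that $m_u\leq m_v$ whenever $u\leq v$. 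Third, I would check $g_X\leq f_X$ coordinate by coordinate, which is immediate because the tails are unchanged and $q\restrict m_u\leq p\restrict m_u$.

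For part (2) the construction is the same, tracking the level function alongside the condition: writing $f_X(u)=(p_u,F_u)$, I would set $g_X(u)=(p_u^\dagger,F_u^\dagger)$ where $p_u^\dagger$ is built from $q$ exactly as above and $F_u^\dagger$ is the concatenation of $F'\restrict m_u$ with $F_u\restrict[m_u,\lev(u))$. The inequality $g_X(u)\leq f_X(u)$ in $\q(\p_{\lev(u)})$ splits into the three clauses defining the order on $\q(\p_{\lev(u)})$, each following componentwise from $(q,F')\leq(p,F)$ on the initial segment of length $m_u$ and being trivial on the untouched tail.

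The main obstacle will be the coherence verification. The key observation that tames it is that $m_u$ records precisely the highest level on which $g_X(u)$ differs from $f_X(u)$, and the same modification is applied uniformly to all nodes of $X$ descending from the common meet. Thus coherence with ancestors (which lie on the path from the meet up to $s$) and with cousins (which share the modified prefix through the meet) is inherited directly from the corresponding coherence of $f_X$.
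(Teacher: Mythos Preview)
Your construction is correct and is exactly the paper's own argument, simply phrased more uniformly: the paper splits into the three cases $t\leq s$, $s\leq t$, and $t$ incomparable with $s$, and in each case replaces an initial segment of $f_X(t)$ by the corresponding initial segment of $q$, which is precisely what your single formula with $m_u$ does. Your additional verification steps (that $g_X(u)\in\p_{\lev(u)}$, coherence, and $g_X\leq f_X$) are details the paper leaves implicit, and your key monotonicity observation $m_u\leq m_v$ for $u\leq v$ is correct and is what makes the coherence check go through.
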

\begin{proof}
We will only prove (1) because the proof of (2) is analogous. Define $g_X$ as follows.  Fix a node $t\in X$. If $t\leq s$, then let $g_X(t)=q\restrict \lev(t)$. If $s\leq t$, then let $g_X(t)$ be $q$ concatenated with the tail of $f_X(t)$.  Otherwise, let $t'$ be the largest node that is compatible with both $t$ and $s$. Let $g_X(t)$ be $q\restrict\lev(t')$ concatenated with the tail of $f_X(t)$.
\end{proof}
Let us call a condition $f_X\in \q(\vec P,\mathscr T)$ \emph{determined} if every $f_X(s)$ is determined. Clearly a condition $f_X$ is determined if and only if conditions on the terminal nodes are determined.

\begin{proposition}\label{prop:determinedInTreeDense}
The set of all determined conditions is dense in $\q(\vec P,\mathscr T)$.
\end{proposition}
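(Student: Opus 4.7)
The plan is to exploit two tools from the preceding sections: density of determined conditions in each finite-iteration fusion poset $\q(\p_n)$, and Proposition~\ref{prop:strengthenTreeCondition}(2), which allows one to impose a prescribed strengthening at a single node of a tree condition and propagate it coherently throughout the tree. Since the preceding observation tells us that $f_X$ is determined iff the conditions at its terminal nodes are determined, it suffices to strengthen the condition at each terminal of $X$ to a determined one while respecting coherence.

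Concretely, fix $f_X\in\q(\vec P,\mathscr T)$ and enumerate the terminal nodes of $X$ as $s_1,\dots,s_m$. I would build a decreasing sequence $f_X=g_X^0\geq g_X^1\geq\cdots\geq g_X^m$ with the invariant that $g_X^k(s_j)$ is determined in $\q(\p_{\lev(s_j)})$ for every $j\leq k$. At stage $k+1$, use density to choose a determined $(q_{k+1},F_{k+1})\leq g_X^k(s_{k+1})$ in $\q(\p_{\lev(s_{k+1})})$, and apply Proposition~\ref{prop:strengthenTreeCondition}(2) to obtain $g_X^{k+1}\leq g_X^k$ with $g_X^{k+1}(s_{k+1})=(q_{k+1},F_{k+1})$. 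Then $g_X^m\leq f_X$ is determined at every terminal, so by the preceding observation it is a determined element of $\q(\vec P,\mathscr T)$.

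The hard part is to verify that the determinedness of $g_X^k(s_j)$ established at stage $j\leq k$ survives the propagation at stage $k+1$. Unpacking the formula in the proof of Proposition~\ref{prop:strengthenTreeCondition}, the new condition $g_X^{k+1}(s_j)$ is obtained from $g_X^k(s_j)$ by replacing its prefix up to level $\lev(t')$, where $t'$ is the greatest common ancestor of $s_j$ and $s_{k+1}$ in $X$, with the corresponding restriction of $(q_{k+1},F_{k+1})$. So the heart of the argument is the following side lemma: if $(p,F)\in\q(\p_n)$ is determined, $m\leq n$, and $(p',F')\leq (p\restrict m,F\restrict m)$ is a determined condition in $\q(\p_m)$, then the condition $(\bar p,\bar F)\in\q(\p_n)$ obtained by concatenating $(p',F')$ with the tail of $(p,F)$ is again determined. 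One also needs the easy fact that the restriction of a determined condition is determined, which guarantees that the new prefix $(q_{k+1}\restrict\lev(t'),F_{k+1}\restrict\lev(t'))$ is itself determined.

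To prove the side lemma, I would take any $\sigma:n\to\ex{\lt\omega}2$ lying on levels $\bar F$, form its truncation $\sigma^{\downarrow}$ to levels $F$ defined by $\sigma^{\downarrow}(i)=\sigma(i)\restrict F(i)$ for $i<m$ and $\sigma^{\downarrow}(i)=\sigma(i)$ for $i\geq m$, and run a case analysis on whether $\sigma^{\downarrow}$ lies on $p$ and whether $\sigma$ lies on $p'$, in each case producing either a witness for non-membership in $(\bar p,\bar F)$ or a verification of membership. Two ingredients make the case analysis go through: the agreement clause $p'\restrict i\forces p'(i)\cap\ex{F(i)}2=p(i)\cap\ex{F(i)}2$ built into $(p',F')\leq(p\restrict m,F\restrict m)$, which transfers ``$\sigma$ lies on $p'$'' to ``$\sigma^{\downarrow}$ lies on $p$'' at prefix coordinates; and the fact that $(\bar p\restrict i)\mid(\sigma\restrict i)$ is everywhere a strengthening of $(p\restrict i)\mid(\sigma^{\downarrow}\restrict i)$ in $\p_i$, which lets witnesses for non-membership coming from the determinedness of $(p,F)$ at tail coordinates pass through, while witnesses at prefix coordinates come from the determinedness of $(p',F')$.
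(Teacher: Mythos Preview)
Your approach is essentially the same as the paper's: enumerate the terminal nodes, strengthen one at a time via Proposition~\ref{prop:strengthenTreeCondition}(2) to make each terminal determined, and then check that determinedness at earlier terminals survives later strengthenings. The paper is terser at the crucial point, simply noting that the new condition at $s_j$ is the restriction of the new determined condition at $s_{k+1}$ to the split level, concatenated with the old (determined) tail, and asserting that ``using the definition of what it means to be determined, it is easy to see that the whole condition is determined''; your side lemma makes this step explicit and your case analysis sketch is a correct way to verify it.
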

\begin{proof}
Fix $f_X\in \q(\vec P,\mathscr T)$. Let $\la s_i\mid i\leq m\ra$ be an enumeration of the terminal nodes of $X$. Using the construction in the proof of Proposition~\ref{prop:strengthenTreeCondition}, strengthen $f_X$ to $f^0_X$ such that $f^0_X(s_0)$ is determined. Inductively, given $f^i_X$, we let $f^{i+1}_X$ be the strengthening of $f^i_X$ constructed as in the proof of Proposition~\ref{prop:strengthenTreeCondition} such that $f^{i+1}_X(s_{i+1})$ is determined. Let's argue that $f^m_X$ is determined. We can assume inductively that the conditions $f^{m-1}_X(s_i)$ for $i<m$ are determined. By construction $f^m_X(s_m)$ is determined. So fix $i<m$. Let $s$ be the node where the branch of $s_i$ and the branch of $s_m$ split. By construction, $f^m_X(s_i)$ is $f^m_X(s_m)\restrict \lev(s)$ concatenated with the tail of $f^{m-1}_X(s_i)$. Clearly $f^m_X(s_m)\restrict \lev(s)$ is determined and $f^m_X(s_m)\restrict \lev(s)\leq f^{m-1}_X(s_i)\restrict \lev(s)$. Now using the definition of what it means to be determined, it is easy to see that the whole condition is determined.
\end{proof}
We will initially consider tree iterations along the countable tree $\ex{\lt\omega}\omega$, and then later extend our results to tree iterations along the uncountable tree $\ex{\lt\omega}\omega_1$.

It is easy to see that the poset $\q(\q_0)^{\lt\omega}$ completely embeds into $\q(\vec P,\ex{\lt\omega}\omega)$ via the map sending a condition to the corresponding tree of height $\leq 2$.
\begin{proposition}\label{prop:completeEmbeddingProductTreeLevel1}
The poset $\q(\q_0)^{\lt\omega}$ completely embeds into $\q(\vec P,\ex{\lt\omega}\omega)$ via the following map $\varphi$:
\begin{enumerate}
\item $\varphi(\one_{\q(\q_0)})=f_X$, where $X$ consists of the root node $s$ and $f_X(s)=\emptyset$.\\
For $p$ with some non-trivial $p(i)$,
\item $\varphi(p)=f_X$, where $X$ consists of the root node together with nodes $\la i\ra$ for non-trivial $p(i)$, such that $f_X(\la i\ra)=p(i)$.
\end{enumerate}
\end{proposition}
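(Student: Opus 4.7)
The plan is to verify the three conditions defining a complete embedding of posets: order preservation, incompatibility preservation, and the reduction property. The first two are essentially immediate from the shape of $\varphi$, so the substance of the proof lies in the third.

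For order preservation, suppose $p' \leq p$ in $\q(\q_0)^{\lt\omega}$. Then $p'(i) \leq p(i)$ in $\q(\q_0)$ on every coordinate, so $\dom(\varphi(p'))$ extends $\dom(\varphi(p))$, and for every $\la i\ra \in \dom(\varphi(p))$ we have $\varphi(p')(\la i\ra) = p'(i) \leq p(i) = \varphi(p)(\la i\ra)$, giving $\varphi(p') \leq \varphi(p)$. For incompatibility preservation, if $p \perp q$ in $\q(\q_0)^{\lt\omega}$, then $p(i) \perp q(i)$ in $\q(\q_0) = \q(\p_1)$ for some $i$; both $\varphi(p)$ and $\varphi(q)$ include $\la i\ra$ in their domains with incompatible values there, so no common refinement can exist.

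For the reduction property, given $f_X \in \q(\vec P, \ex{\lt\omega}\omega)$, I will take as reduction the condition $p \in \q(\q_0)^{\lt\omega}$ with $p(i) = f_X(\la i\ra)$ when $\la i\ra \in X$ and $p(i) = \one_{\q(\q_0)}$ otherwise. To show $p$ is a reduction, given any $p' \leq p$, I construct a common refinement $g_Y \in \q(\vec P, \ex{\lt\omega}\omega)$ of $\varphi(p')$ and $f_X$ as follows. Set $Y = X \cup \dom(\varphi(p'))$, put $g_Y(\emptyset) = \emptyset$, and set $g_Y(\la i\ra) = p'(i)$ for every $\la i\ra \in Y$ (this is $\leq f_X(\la i\ra) = p(i)$ whenever $\la i\ra \in X$). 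For a node $s \in X$ of level $n \geq 2$ whose first coordinate is $i$, writing $f_X(s) = (p_s, F_s)$, define $g_Y(s) = (q_s, G_s)$ by replacing the zeroth coordinate pair $(p_s(0), F_s(0)) = f_X(\la i\ra)$ with $p'(i)$, and leaving $p_s(j)$ and $F_s(j)$ untouched for $1 \leq j < n$.

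The main technical point is checking that this $g_Y$ is a coherent condition in $\q(\vec P, \ex{\lt\omega}\omega)$ that lies below both $f_X$ and $\varphi(p')$. A direct check of the ordering in $\q(\p_n)$, using $p'(i) \leq f_X(\la i\ra)$ in $\q(\q_0)$, shows $(q_s, G_s) \leq (p_s, F_s)$ for each $s$; coherence of $g_Y$ then follows from coherence of $f_X$ together with the fact that the zeroth coordinate has been modified uniformly along every branch of $X$. Once this is in place, $g_Y \leq f_X$ and $g_Y \leq \varphi(p')$ reduce to pointwise comparisons that hold by construction. I do not anticipate a substantive obstacle: the bookkeeping is routine, in the spirit of Proposition~\ref{prop:strengthenTreeCondition}.
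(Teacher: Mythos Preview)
Your argument is correct. The paper does not actually give a proof of this proposition; it simply prefaces it with ``It is easy to see that the poset $\q(\q_0)^{\lt\omega}$ completely embeds into $\q(\vec P,\ex{\lt\omega}\omega)$ via the map sending a condition to the corresponding tree of height $\leq 2$'' and leaves the verification to the reader. Your reduction---restrict $f_X$ to its level-$1$ nodes to get $p$, and given $p'\leq p$ replace the zeroth coordinate uniformly along each branch of $X$ by $p'(i)$---is exactly the natural way to fill in the details, and is indeed a multi-node version of the construction in Proposition~\ref{prop:strengthenTreeCondition}, as you note.
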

\noindent More generally, for each node $s$ on level $n$ of $\ex{\lt\omega}\omega$, $\bar\q(\p_{n+1})$ completely embeds into $\q(\vec P,\ex{\lt\omega}\omega)$ via the map sending a condition to the corresponding tree of height $\leq n+2$ whose stem stretches up to $s$.
\begin{proposition}\label{prop:completeEmbeddingProductTree}
Fix a node $s$ on level $n$ of $\ex{\lt\omega}{\omega}$. The poset $\bar\q(\p_{n+1})$ completely embeds into $\q(\vec P,\ex{\lt\omega}\omega)$ via the following map $\varphi_s$:
\begin{enumerate}
\item $\varphi_s(\one_{\bar \q(\p_{n+1})})=f_X$, where $X$ is the branch ending in $s$, such that $f_X(s)=\one_{\q(\p_n)}$.\\ For $(p,F)$ with $\text{dom}(p(n))=k$,
\item $\varphi_s((p,F))=f_X$, where $X$ consists of the branch ending in $s$ together with nodes $\la s\concat i\ra$ for $i<k$, such that $f_X(s)=(p\restrict n,F\restrict n)$ and
\begin{center}
$f_X(s\concat i)=(p\restrict n\concat p(n)(i),F\restrict n\concat F(n)(i))$.
\end{center}
\end{enumerate}
\end{proposition}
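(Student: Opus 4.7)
The plan is to verify that $\varphi_s$ satisfies the three standard conditions for being a complete embedding: order preservation, incompatibility preservation, and the reduction property (for every $f_Y\in\q(\vec P,\ex{\lt\omega}\omega)$ there is a $(p^*,F^*)\in\bar\q(\p_{n+1})$ such that every strengthening of $(p^*,F^*)$ maps to a condition compatible with $f_Y$). Order preservation will be immediate from the definitions: if $(p',F')\leq(p,F)$ in $\bar\q(\p_{n+1})$, then the length of the tuple $p'(n)$ is at least the length of $p(n)$, so the image domain extends, values on the branch strengthen in $\q(\p_n)$ as prescribed, and values on already-present children $s\concat i$ strengthen in $\q(\p_{n+1})$. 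Incompatibility preservation will follow by contrapositive: from a common extension of two image conditions in the tree iteration, one reads off a common extension in $\bar\q(\p_{n+1})$ by restricting attention to the branch up to $s$ and to the union of the children of $s$ that are used.

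The main work is the reduction property. Given $f_Y$, I first enlarge the domain to $\tilde Y$ that contains the entire branch up to $s$ together with a fixed number $K\geq 1$ of children $s\concat i$ for $i<K$, filling in the trivial value $\one_{\q(\p_m)}$ at each newly added level-$m$ node; this yields $f_{\tilde Y}\leq f_Y$. From $f_{\tilde Y}$ one then reads off a condition $(p^*,F^*)\in\bar\q(\p_{n+1})$ by setting $(p^*\restrict n,F^*\restrict n)=f_{\tilde Y}(s)$ and choosing $p^*(n)(i)$, $F^*(n)(i)$ for $i<K$ so that $(p^*\restrict n\concat p^*(n)(i),F^*\restrict n\concat F^*(n)(i))=f_{\tilde Y}(s\concat i)$. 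Then $\varphi_s((p^*,F^*))$ is precisely $f_{\tilde Y}$ restricted to the branch plus these $K$ children, and I claim $(p^*,F^*)$ is the desired reduction.

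To prove the claim, fix $(r,H)\leq(p^*,F^*)$ with $\dom(r(n))=k_r\geq K$, and let $W_r=\dom(\varphi_s((r,H)))$, namely the branch up to $s$ together with the $k_r$ children $s\concat i$ for $i<k_r$. I will construct a common extension $h_Z$ of $\varphi_s((r,H))$ and $f_Y$ on $Z=\tilde Y\cup W_r$ by putting the values of $\varphi_s((r,H))$ on $W_r$, and for each $t\in\tilde Y\setminus W_r$ locating the largest ancestor $t'$ of $t$ that lies in $W_r$ and defining $h_Z(t)$ as the condition obtained by taking the initial segment of $\varphi_s((r,H))(t')$ up through level $\lev(t')$ and concatenating it with the tail of $f_{\tilde Y}(t)$ from level $\lev(t')$ onward. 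This is exactly the prefix-replacement construction that appears in the proof of Proposition~\ref{prop:strengthenTreeCondition}.

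The main obstacle will be verifying that $h_Z$ is a legitimate condition in $\q(\vec P,\ex{\lt\omega}\omega)$: on each $t\in\tilde Y\setminus W_r$ the concatenated prefix-tail must still be a valid element of $\q(\p_{\lev(t)})$, and the coherence requirement $h_Z(u')\restrict\lev(u)=h_Z(u)$ must hold for every $u\leq u'$ in $Z$. The key observations are that coherence of $f_{\tilde Y}$ together with $\varphi_s((r,H))(t')\leq f_{\tilde Y}(t')$ makes the replaced prefix force the same tree-level agreement that the old tail demanded, that the largest $W_r$-ancestor of two comparable nodes in $\tilde Y\setminus W_r$ is forced to coincide so the tails match on the overlap, and that the $F$-parts of the replaced prefix are pointwise at least as large as before so the $\q$-ordering is preserved. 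Once this is checked, $h_Z\leq f_Y$ and $h_Z\leq\varphi_s((r,H))$ are immediate, establishing the required compatibility and completing the verification that $\varphi_s$ is a complete embedding.
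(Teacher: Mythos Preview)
Your argument is essentially correct and supplies a full verification where the paper gives none: Proposition~\ref{prop:completeEmbeddingProductTree} is stated in the paper without proof, as a routine extension of Proposition~\ref{prop:completeEmbeddingProductTreeLevel1}. Your three-step verification (order preservation, incompatibility preservation, reduction) is the standard route, and your use of the prefix-replacement construction from Proposition~\ref{prop:strengthenTreeCondition} to build the common extension $h_Z$ is exactly right.

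There is one genuine gap in the exposition that you should patch. You write that you enlarge $Y$ to $\tilde Y$ by adjoining the branch to $s$ together with ``a fixed number $K\geq 1$'' of children $s\concat i$ for $i<K$. But $K$ cannot be arbitrary: you must take $K$ large enough that every child of $s$ already present in $Y$ is among $s\concat 0,\ldots,s\concat(K-1)$, i.e.\ $K>\max\{i:s\concat i\in Y\}$ (or $K=1$ if $s$ has no children in $Y$). Otherwise the reduction fails. Concretely, suppose $s\concat 10\in Y$ carries a nontrivial value but you take $K=5$. Then $(p^*,F^*)$ records nothing about coordinate $10$ of $p^*(n)$, so an extension $(r,H)\leq(p^*,F^*)$ with $\dom(r(n))=20$ may have $r(n)(10)$ completely unrelated to $f_Y(s\concat 10)$. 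Since $s\concat 10\in W_r$, your definition gives $h_Z(s\concat 10)=\varphi_s((r,H))(s\concat 10)$, and there is no reason this lies below $f_{\tilde Y}(s\concat 10)$. With the correct choice of $K$, every child of $s$ appearing in $\tilde Y$ has index below $K\leq k_r$, so the needed inequality $\varphi_s((r,H))(s\concat i)\leq\varphi_s((p^*,F^*))(s\concat i)=f_{\tilde Y}(s\concat i)$ is available, and your argument goes through.
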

\noindent Suppose $G\subseteq \q(\vec P,\ex{\lt\omega}\omega)$ is $V$-generic and fix some node $s$ on level $n$ of $\ex{\lt\omega}{\omega}$. We will use the notation $G_s$ for the $V$-generic filter for $\bar\q(\p_{n+1})$ added by $G$ via the embedding $\varphi_s$ and we will use the notation $G_\emptyset$ for the $V$-generic filter for $\q(\q_0)^{\lt\omega}$ added by $G$ via the embedding $\varphi$.

Suppose now that $\vec P=\la \p_n\mid n<\omega\ra$ is an $\omega$-iteration of perfect posets that is an element of a suitable model $M$. Let $G\subseteq \q(\vec P,\ex{\lt\omega}\omega)$ be $M$-generic. We will argue that in $M[G]$, we can grow each iteration $\p_n$ to an iteration $\p_n^*$ satisfying requirements (1)-(4) from Section~\ref{sec:GrowingIterations}:
\begin{enumerate}
\item $\q_0\subseteq \q_0^*$,
\item For all $1\leq i<n$, $\one_{\p_i^*}$ forces that $\dot\q_i$ is a perfect poset and $\dot\q_i^*$ extends it.
\item $\p_n\subseteq \p_n^*$.
\item Every maximal antichain $\mathcal A\in M$ of $\p_n$ remains maximal in $\p_n^*$.
\end{enumerate}

It is straightforward to extend $\q_0$ to $\q^*_0$. By Proposition~\ref{prop:completeEmbeddingProductTreeLevel1}, $G$ adds an $M$-generic filter $G_\emptyset$ for $\q(\q_0)^{\lt\omega}$. Let $\T_i^0$ for $i<\omega$ be the generic perfect trees added by $G_\emptyset$ and construct $\q_0^*$ as before. Recall that $\la \T_i^0\mid i<\omega\ra$ is a maximal antichain in $\q_0^*$ and every maximal antichain $\mathcal A\in M$ of $\q_0$ remains maximal in $\q_0^*$.

Now let's show how to extend $\dot\q_1$ to a $\p_1^*$-name $\dot\q^*_1$ for a perfect poset.  By Proposition~\ref{prop:completeEmbeddingProductTree}, for each node $s$ on level 1 of the tree $\ex{\lt\omega}\omega$, $G$ adds an $M$-generic filter $G_s$ for $\bar\q(\p_2)$. Observe that each $\T_i^0\leq p\restrict 1$ for all $p$ with $(p,F)\in G_{\la i\ra}$. Thus, by Theorem~\ref{th:genericConditionProduct}, whenever $H^*\subseteq \p_1^*$ is a $V$-generic filter containing $\T_i^0$, then the interpretation $\tau(G_{\la i\ra})_{H^*}$  is an $M[H]$-generic filter for $\q((\dot\q_1)_{H})^{\lt\omega}$, where $H$ is the restriction of $H^*$ to $\p_1$. Let $\tau$ be a mixed $\p_1^*$-name that is interpreted as $\tau(G_{\la i\ra})_{H^*}$, whenever $\T_i^0\in H^*$. Since the conditions $\T_i^0$ form a maximal antichain in $\p_1^*$,
\begin{center}
$\one_{\p_1^*}\forces``\tau$ is an $M[\dot H]$-generic filter for $\q(\dot\q_1)^{\lt\omega}$,"
\end{center}
where $\dot H$ is the canonical name for the restriction of the generic filter to $\p_1$. So let $\dot \q_1^*$ be a $\p_1^*$-name for the perfect poset constructed as usual from $\dot \q_1$ and $\tau$.

For each $j<\omega$, we can choose a $\p_1^*$-name $\dot \T_j^1$ for the perfect tree on coordinate $j$ of $\tau$. The pairs $(\T_i^0,\dot \T_j^1)$ form a maximal antichain in $\p_2^*$ because the collection of $\T_i^0$ is a maximal antichain and each $\T_i^0$ forces that the trees $\dot T_j^1$ form a maximal antichain in $\dot\q_1^*$. Also, clearly for every $(p,F)\in G_{\la i, j\ra}$, we have $(\T_i^0,\dot \T_j^1)\leq p\restrict 2$. This will allow us to apply Theorem~\ref{th:genericConditionProduct} to grow $\dot \q_2$ to $\dot \q_2^*$. Finally, let's argue that $\p_2$ is actually a subset of $\p_2^*$. Suppose $p$ is a condition in $\p_2$. Then $p(0)\in \p_1$ and hence $p(0)\in \p_1^*$ as well. Also, clearly $p(1)$ is a $\p_1^*$-name. So we need to argue that $p(0)\forces_{\p_1^*}p(1)\in \dot\q_1^*$. Fix a $V$-generic filter $H^*\subseteq \p_1^*$ with $p(0)\in H^*$ and consider $M[H]$ where $H$ is the restriction of $H^*$ to $\p_1$. Since $p(0)\forces_{\p_1} p(1)\in \dot \q_1$, in $M[H]$, we have $p(1)_H\in (\dot \q_1)_H\subseteq (\dot \q_1^*)_{H^*}$. It remains to show (4), that every maximal antichain of $\p_2$ from $M$ remains maximal in $\p_2^*$. We will provide an inductive proof of this later in Lemma~\ref{lem:antichainSuccessorCase}.

For now to finish the construction, we assume that properties (1)-(4) hold for $\p_n^*$. We will additionally assume that:
\begin{enumerate}
\item For each node $s$ on level $n$ of $\ex{\lt\omega}\omega$ there is a condition $$(\T_{s(0)}^0,\dot\T_{s(1)}^1,\ldots,\dot \T_{s(n-1)}^{n-1})\in \p_n^*$$ which is below all $p\restrict n$ with $(p,F)\in G_s$.
\item The collection of all such conditions $(\T_{s(0)}^0,\dot\T_{s(1)}^1,\ldots,\dot \T_{s(n-1)}^{n-1})$ is a maximal antichain in $\p_n^*$.
\end{enumerate}
With this set-up, we extend $\dot \q_n$ to $\dot \q_n^*$ identically to the case $n=1$ above, using Theorem~\ref{th:genericConditionProduct}. It is also easy to see inductively that $\p_n$ is a subset of $\p_n^*$.

To get some intuition for the construction, let's fix an $L$-generic filter $H^*\subseteq\p_1^*$ and see what a condition $\T^1_j=(\dot\T^1_j)_{H^*}$ looks like. Since $\la\T^0_i\mid i<\omega\ra$ is a maximal antichain of $\p_1^*$, there is a unique $i$ such that $\T^0_i\in H^*$. Let $$K=\{(p(1)_{H^*},F(1))\mid f_X\in G\text{ and }f_X(\la i,j\ra)=(p,F)\}.$$ Then $\T_j^1$ is the union of $T\cap \ex{n}2$ for $(T,n)\in K$.

To give the promised argument that every maximal antichain of $\p_n$ from $M$ remains maximal in $\p_n^*$, we first need to define the analogue of $\U\subseteq \p^*$ from Section~\ref{sec:PerfectPosets} for $\p_n^*$.

Let $\U_n$ be the subset of $\p_n^*$ consisting of conditions $p=(p_0,\dot p_1,\ldots,\dot p_{n-1})$ such that $p_0=\T^0_{j_0}\wedge S_0$ for some $j_0<\omega$ and $S_0\in \q_0$, and for $i<n$, $p\restrict i\forces \dot p_i=\dot\T^i_{j_i}\wedge \dot S_i$ for some $j_i<\omega$ and a $\p_i$-name $\dot S_i$ such that $\one_{\p_i}\forces \dot S_i\in\dot\q_i$.

\begin{proposition}\label{prop:UnDenseInP*n}
$\U_n$ is dense in $\p^*_n$.
\end{proposition}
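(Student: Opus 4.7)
The plan is to prove density by induction on $n$, reducing each step to the Section~\ref{sec:PerfectPosets} density result (Proposition~\ref{prop:propertiesOfP*}(2)) applied internally at the new top coordinate. For the base case $n=1$, the poset $\p_1^*=\q_0^*$ was constructed exactly as in Section~\ref{sec:PerfectPosets} from the $M$-generic filter $G_\emptyset \subseteq \q(\q_0)^{\lt\omega}$ with generic trees $\T_i^0$, so $\U_1$ coincides with the set $\U$ of that section and density is immediate.

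For the inductive step, I would fix $p=(p_0,\dot p_1,\ldots,\dot p_n)\in \p_{n+1}^*$ and first apply the inductive hypothesis to obtain some $q=(q_0,\dot q_1,\ldots,\dot q_{n-1})\in\U_n$ with $q\leq p\restrict n$. Since $q\leq p\restrict n$, we still have $q\forces \dot p_n\in\dot\q_n^*$. Recall that $\dot\q_n^*$ was constructed, internally in a $\p_n^*$-generic extension of $M$, from the $\p_n$-name $\dot\q_n$ and the $M[\dot H]$-generic filter $\tau$ for $\q(\dot\q_n)^{\lt\omega}$, in exactly the manner that $\p^*$ was built from $\p$ in Section~\ref{sec:PerfectPosets}. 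Applying Proposition~\ref{prop:propertiesOfP*}(2) inside that extension, $\one_{\p_n^*}$ forces the set of nonempty $\dot\T^n_j\wedge S$ (for $j<\omega$ and $S\in(\dot\q_n)_{\dot H}$) to be dense in $\dot\q_n^*$. A routine maximal antichain argument then produces a strengthening $q^\dagger\leq q$ in $\p_n^*$, an index $j_n<\omega$, and a name $\dot S_n$ with $q^\dagger\forces \dot\T^n_{j_n}\wedge \dot S_n\leq \dot p_n$. A final application of the inductive hypothesis promotes $q^\dagger$ to some $q'\in\U_n$ with $q'\leq q^\dagger$, and the condition $(q',\dot\T^n_{j_n}\wedge\dot S_n)$ then lies in $\U_{n+1}$ and extends $p$.

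The main obstacle I anticipate is the bookkeeping around the name $\dot S_n$: the definition of $\U_{n+1}$ requires $\dot S_n$ to be a genuine $\p_n$-name with $\one_{\p_n}\forces \dot S_n\in\dot\q_n$, whereas the density argument most naturally produces a $\p_n^*$-name, or at best a $\p_n$-name that is forced into $\dot\q_n$ only below some particular condition. I would resolve this by invoking property (4) from Section~\ref{sec:GrowingIterations}, namely that every maximal antichain of $\p_n$ from $M$ remains maximal in $\p_n^*$, in order to pull the relevant name back into $M$ as a $\p_n$-name, and then passing to the mixed $\p_n$-name that agrees with it below the controlling condition and equals $\ex{\lt\omega}2$ elsewhere; this mixed name is forced by $\one_{\p_n}$ to lie in $\dot\q_n$, which suffices to witness membership of the assembled condition in $\U_{n+1}$.
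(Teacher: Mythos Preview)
Your proposal is correct and matches the paper's approach: induction on $n$, handling the top coordinate via the internal density result (Proposition~\ref{prop:propertiesOfP*}(2)), repairing $\dot S_n$ by mixing against $\ex{\lt\omega}2$ over a maximal antichain of $\p_n$ in $M$, and then applying the inductive hypothesis to push the lower part into $\U_n$. The paper notes explicitly that this proof runs \emph{simultaneously} with Lemma~\ref{lem:antichainSuccessorCase}, so the antichain-preservation property (4) you invoke for $\p_n$ is available as part of the joint inductive hypothesis rather than as an independently established fact; also, your initial application of the inductive hypothesis to $p\restrict n$ is harmless but redundant, since the final application already does the work.
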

\begin{proof}
We argue by induction on $n$. By Proposition~\ref{prop:propertiesOfP*}, $\U_1$ is dense in $\p_1^*$ and every maximal antichain of $\p_1$ from $M$ remains maximal in $\p_1^*$. So let's suppose inductively that $\U_n$ is dense in $\p_n^*$ and every maximal antichain of $\p_n$ from $M$ remains maximal in $\p_n^*$. This argument is meant to take place simultaneously with the inductive proof of Lemma~\ref{lem:antichainSuccessorCase}, where we use the density of $\U_n$ to argue that maximal antichains stay maximal.

Fix a condition $(q_0',\dot q_1',\ldots, \dot q_n')$ in $\p_{n+1}^*$. In particular, we have $(q_0',\dot q_1',\ldots,\dot q_{n-1}')\forces \dot q_n'\in \q_n^*$. Consider any forcing extension $V[H^*]$ by $\p_n^*$ with $(q_0',\dot q_1',\ldots,\dot q_{n-1}')\in H^*$. In $V[H^*]$, there is $j_n<\omega$ and $S\in (\dot \q_n)_{H^*}$ such that $(\dot \T_{j_n}^n)_{H^*}\wedge S\leq (q_n')_{H^*}$. So $S\in M[H]$, where $H$ is the restriction of $H^*$ to $\p_n$. Let $\dot S$ be a $\p_n$-name in $M$ such that $(\dot S)_H=S$ and let $p\in H$ force that $\dot S\in \dot\q_n$. Let $\mathcal A\in M$ be any maximal antichain of $\p_n$ extending $\{p\}$ and let $\dot S_n$ be the mixed name such that $p\forces \dot S=\dot S_n$ and every other $q\in \mathcal A$ forces that $\dot S_n=\ex{\lt\omega}2$. So $\one_{\p_n}\forces \dot S_n\in \dot\q_n$. Now let $(q_0,\dot q_1,\ldots,\dot q_{n-1})\leq (q_0',\dot q_1',\ldots,\dot q_{n-1}')$ be a condition in $H^*$ forcing that $\dot{\T}^n_{j_n}\wedge \dot S_n\leq \dot q_n'$. We have just argued that there is a condition $(q_0,\dot q_1,\ldots,\dot q_{n-1})\leq (q_0',\dot q_1',\ldots,\dot q_{n-1}')$ and a $\p_n$-name $\dot S_n$ such that $\one_{\p_n}\forces \dot S_n\in \dot\q_n$ and $$(q_0,\dot q_1,\ldots,\dot q_{n-1})\forces \dot{\T}^n_{j_n}\wedge \dot S_n\leq \dot q_n'$$ for some $j_n<\omega$.

Now, by the inductive assumption, there is a condition $$(p_0,\dot p_1,\ldots,\dot p_{n-1})\leq (q_0,\dot q_1,\ldots, \dot q_{n-1})$$ in $\U_n$. So clearly the condition $(p_0,\dot p_1,\ldots,\dot p_{n-1},\dot{\T}^n_{j_n}\wedge \dot S_n)\in\U_{n+1}$.
\end{proof}

\noindent We will usually abuse notation by writing conditions in $\U_n$ in the form $$(\T^0_{j_0}\wedge S_0,\dot \T^1_{j_1}\wedge \dot S_1,\ldots,\dot \T^{n-1}_{j_{n-1}}\wedge \dot S_{n-1}).$$ The next lemma is a generalization of Proposition~\ref{prop:subtreeContainedInS}.
\begin{lemma}\label{lem:subtreeContainedinSiteration}
Suppose $f_X\in \q(\vec P,\ex{\lt\omega}\omega)$ forces that $$(\T^0_{j_0}\wedge S_0,\dot \T_{j_1}^1\wedge \dot S_1,\ldots,\dot \T_{j_{n-1}}^{n-1}\wedge \dot S_{n-1})\in \U_n.$$ Then there is a condition $g_Y\leq f_X$ such that $g_Y(\la j_0,j_1,\ldots,j_{n-1}\ra)=(q,F)$ is determined and $\tau:n\to \ex{\lt\omega}2$ lies on $(q,F)$ such that
\begin{center}
$q(0)_{\tau(0)}\leq S_0$, and for all $1\leq i<n$, $(q\restrict i)\mid(\tau\restrict i)\forces q(i)_{\tau(i)}\leq \dot S_i$.
\end{center}
\end{lemma}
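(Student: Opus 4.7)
The plan is to prove the lemma by induction on $n$, extracting one coordinate of $\tau$ at each step via the technique of Proposition~\ref{prop:subtreeContainedInS}. The key input throughout will be that, by the coherence of the fusion tree iteration condition together with the complete embeddings of Propositions~\ref{prop:completeEmbeddingProductTreeLevel1} and~\ref{prop:completeEmbeddingProductTree}, the condition $f_X$ forces $\dot\T^i_{j_i}\leq q(i)$ for each $i<n$, where $(q,F)=f_X(\la j_0,\ldots,j_{n-1}\ra)$. Combined with the hypothesis that $f_X$ forces $\dot\T^i_{j_i}\wedge\dot S_i\neq\emptyset$, this allows a single-coordinate subtree extraction at each level.

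I would begin by reducing to the case where $f_X$ is determined and the branch from the root to $s=\la j_0,\ldots,j_{n-1}\ra$ is in its domain $X$, achieved by extending the domain trivially and applying Proposition~\ref{prop:determinedInTreeDense}. For the base case $n=1$: writing $f_X(\la j_0\ra)=(T,m)$, we have $f_X\forces \T^0_{j_0}\leq T$ and $f_X\forces \T^0_{j_0}\wedge S_0\neq\emptyset$, so that $T\wedge S_0\neq\emptyset$ holds in $M$. The argument from Proposition~\ref{prop:subtreeContainedInS}, carried out inside $M$, produces $(\bar T,\bar m)\leq(T,m)$ and a node $\tau(0)$ on level $\bar m$ of $\bar T$ with $(\bar T)_{\tau(0)}\leq S_0$; propagation via Proposition~\ref{prop:strengthenTreeCondition} produces the required $g_Y$.

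For the inductive step, I would invoke the induction hypothesis on $s\restrict n$ to obtain a determined strengthening $g_{Y'}\leq f_X$ at $s\restrict n$ with $\tau\restrict n$ lying on it, then turn to the level-$n$ coordinate of $g_{Y'}(s)=(q,F)$, where $q\restrict n$ agrees with the earlier strengthening by coherence. By strengthening $g_{Y'}$ along the branch to $s\restrict n$ to force $\dot\T^i_{j_i}\leq q(i)_{\tau(i)}$ for each $i<n$, I would arrange that the induced $\p_n$-generic must contain $(q\restrict n)\mid(\tau\restrict n)$; combined with $g_{Y'}\forces \dot\T^n_{j_n}\leq q(n)$ and $g_{Y'}\forces \dot\T^n_{j_n}\wedge\dot S_n\neq\emptyset$, this yields, as a statement in $M$, that $(q\restrict n)\mid(\tau\restrict n)\forces_{\p_n} q(n)\wedge\dot S_n\neq\emptyset$. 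Passing to a $\p_n$-generic extension $M[H]$ containing $(q\restrict n)\mid(\tau\restrict n)$ and applying the technique of Proposition~\ref{prop:subtreeContainedInS} to the perfect trees $q(n)_H$ and $(\dot S_n)_H$ in $M[H]$, then reflecting back to $\p_n$-names, produces a strengthening of $q(n)$ and a node $\tau(n)$ on level $F(n)$ such that $(q\restrict n)\mid(\tau\restrict n)\forces q(n)_{\tau(n)}\leq\dot S_n$. A final propagation through the tree iteration via Proposition~\ref{prop:strengthenTreeCondition} gives $g_Y\leq g_{Y'}$.

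The main obstacle is the transfer at each step from a tree-iteration forcing statement to a $\p_n$-forcing statement in $M$: specifically, concluding $(q\restrict n)\mid(\tau\restrict n)\forces_{\p_n} q(n)\wedge\dot S_n\neq\emptyset$ from $g_{Y'}\forces \dot\T^n_{j_n}\wedge\dot S_n\neq\emptyset$. This relies on preceding strengthenings forcing the generic trees $\dot\T^i_{j_i}$ below $q(i)_{\tau(i)}$, not merely below $q(i)$, so that the induced $\p_n$-generic contains $(q\restrict n)\mid(\tau\restrict n)$. Maintaining this strengthened master-condition property while preserving determinedness at ancestor nodes is the delicate bookkeeping that must be handled carefully.
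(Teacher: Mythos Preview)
Your inductive skeleton matches the paper's: induct on $n$, handle $n=1$ via Proposition~\ref{prop:subtreeContainedInS}, and at the inductive step first secure $\tau\restrict n$ and then extract $\tau(n)$. The gap is exactly the step you flag as ``the main obstacle,'' and your proposed resolution does not close it.

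You want to conclude, \emph{in $M$}, that $(q\restrict n)\mid(\tau\restrict n)\Vdash_{\p_n} q(n)\wedge\dot S_n\neq\emptyset$ from the hypothesis that $g_{Y'}$ forces (over $\q(\vec P,{}^{<\omega}\omega)$) that $\dot\T^n_{j_n}\wedge\dot S_n\neq\emptyset$. But $\dot\T^n_{j_n}$ is not an object $g_{Y'}$ can speak about directly: it is a $\p_n^*$-name that only exists once an $M$-generic $G\subseteq\q(\vec P,{}^{<\omega}\omega)$ has been fixed and $\p_n^*$ has been built in $M[G]$. Your ``master-condition strengthening'' forcing $\dot\T^i_{j_i}\leq q(i)_{\tau(i)}$ is a statement about $\p_n^*$-generics over $M[G]$, not about $\p_n$-generics over $M$, so it does not by itself yield a $\p_n$-forcing statement in $M$. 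The paper's argument makes the missing move explicit: suppose the $\p_n$-statement fails, take $p\leq (q\restrict n)\mid\tau$ forcing $q(n)\wedge\dot S_n=\emptyset$, use Proposition~\ref{prop:strengthenDependentAssigment} to fold $p$ back into a determined strengthening of the tree-iteration condition, choose an $M$-generic $G$ containing it, and then in $M[G]$ pick a $V$-generic $H^*\subseteq\p_n^*$ containing the condition $R=(\T^0_{j_0}\wedge S_0\wedge q(0)_{\rho(0)},\ldots)$. The restriction $H=H^*\cap\p_n$ is $M$-generic for $\p_n$ (here one needs the inductive fact, which you do not invoke, that maximal antichains of $\p_n$ from $M$ stay maximal in $\p_n^*$), contains $p$, and hence $q(n)_H\wedge(\dot S_n)_H=\emptyset$; but $(\dot\T^n_{j_n})_{H^*}\leq q(n)_H$, contradicting the hypothesis. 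This detour through $M[G]$ and a $\p_n^*$-generic is the genuine content of the step, and it cannot be replaced by bookkeeping inside $M$ alone.
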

\begin{proof}
The case $n=1$ follows by Proposition~\ref{prop:subtreeContainedInS}. So suppose $n=2$. Recall that every maximal antichain of $\p_1$ from $M$ remains maximal in $\p_1^*$. By strengthening $f_X$, we can assume that $\la j_0,j_1\ra\in X$. Let $f_X(\la j_0,j_1\ra)=(\bar q,\bar F)$. By strengthening further, using the case $n=1$, we can assume that there is a node $s_0$ on level $\bar F(0)$ such that $\bar q(0)_{s_0}\leq S_0$. By strengthening some more, we can assume that $(\bar q,\bar F)$ is determined.

Let's argue that $\bar q(0)_{s_0}$ forces that for some node $s_1$ on level $\bar F(1)$ of $\bar q(1)$, $\bar q(1)_{s_1}\wedge \dot S_1\neq\emptyset$. If this is not the case, then there is some $T\leq\bar q(0)_{s_0}$ which forces that there is no such node. Let $T'$ be the tree we get by replacing $\bar q(0)_{s_0}$ with $T$ in $\bar q(0)$. Let $\bar q'=(T',\bar q(1))$ and let $f'_X$ be a condition strengthening $f_X$ so that $f'_X(\la j_0,j_1\ra)=(\bar q',\bar F)$. Let $G$ be an $M$-generic filter for $\q(\vec P,\ex{\lt\omega}\omega)$ containing $f'_X$ and consider $M[G]$. The tree $R=\T_{j_0}^0\wedge S_0\wedge\bar q'(0)_{s_0}(=T)$ is a condition in $\p_1^*$, and so we can fix some $V$-generic filter $H^*\subseteq \p^*_1$ containing $R$. In particular, $H^*$ contains $\T_{j_0}^0\wedge S_0$ and $T$. Thus, in $M[H]$ (where $H$ is the restriction of $H^*$ to $\p_1$), $\bar q(1)_{H}$ does not have any subtree on level $F(1)$ which has a common subtree with $(\dot S_1)_{H}$. But then $(\dot\T_{j_1}^1)_{H^*}\wedge (\dot S_1)_{H}=\emptyset$ contradicting our assumption that $f_X$ forced $(\T_{j_0}^0\wedge S_0,\dot \T_{j_1}^1\wedge \dot S_1)\in \U_2$.

Let's strengthen $\bar q(0)_{s_0}$ to some $T_0$, so that there is a node $s_1$ on level $\bar F(1)$ such that $T_0\forces \bar q(1)_{s_1}\wedge\dot S_1\neq\emptyset$. Let $\dot T_1$ be a $\p_1$-name such that $$T_0\forces \dot T_1 =\bar q(1)_{s_1}\wedge\dot S_1.$$ Since $(\bar q,\bar F)$ is determined, it follows that $\bar q(0)_{s_0}\forces s_1\in \bar q(1)$. Thus, $\tau=\{(0, s_0), (1,s_1)\}$ lies on $(\bar q,\bar F)$ and $(T_0,\dot T_1)\leq \bar q\mid\tau$. By Proposition~\ref{prop:strengthenDependentAssigment}, there is a determined condition $(q,F)\leq (\bar q,\bar F)$ with $\rho$ lying on $(\bar q,\bar F)$ and extending $\tau$ such that $q\mid\rho\leq (T_0,\dot T_1)$. So finally, we strengthen $f_X$ to a condition $g_X$ with $g_X(\la j_0,j_1\ra)=(q,F)$ using Proposition~\ref{prop:strengthenTreeCondition}~(2).

Suppose inductively that the statement holds for $n$ and every maximal antichain of $\p_n$ from $M$ remains maximal in $\p_n^*$. Let's argue that the statement holds for $n+1$. This will basically be a generalization of the argument passing from $n=1$ to $n=2$. By strengthening $f_X$, we can assume that $\la j_0,\ldots,j_n\ra\in X$ and that $f_X$ is determined. Let $f_X(\la j_0,\ldots,j_n\ra)=(\bar q,\bar F)$.  By using our inductive assumption for $n$ and strengthening further, we can assume that $\tau:n\to \ex{\lt\omega}2$ lies on $(\bar q\restrict n,\bar F\restrict n)$ such that
\begin{center}
$q(0)_{\tau(0)}\leq S_0$, and for all $1\leq i<n$, $(q\restrict i)\mid(\tau\restrict i)\forces q(i)_{\tau(i)}\leq \dot S_i$.
\end{center}
Let's argue that $(\bar q\restrict n)\mid \tau$ forces that for some node $s_n$ on level $\bar F(n)$ of $\bar q(n)$, $\bar q(n)_{s_n}\wedge \dot S_n\neq\emptyset$. If this is not the case, then there is a condition $p\leq (\bar q\restrict n)\mid \tau$ which forces that there is no such node. By Proposition~\ref{prop:strengthenDependentAssigment}, there is a determined condition $(p',F')\leq (\bar q\restrict n,F\restrict n)$ with $\rho$ lying on $(p',F')$ and extending $\tau$ such that $p'\mid\rho\leq p$. Let $\bar q'=p'\union \{(n,\bar q(n))\}$ and $\bar F'=F'\union \{(n,\bar F(n))\}$. Let $f'_X$ be the strengthening of $f_X$ in which $f'_X(\la j_0,\ldots,j_n\ra)=(\bar q',\bar F')$. Let $G$ be an $M$-generic filter for $\q(\vec P,\ex{\lt\omega}\omega)$ containing $f'_X$ and consider $M[G]$.  The condition $$R=(\T_{j_0}^0\wedge S_0\wedge \bar q'(0)_{\rho(0)},\ldots,\dot T_{j_{n-1}}^{n-1}\wedge \dot S_{n-1}\wedge \bar q'(n-1)_{\rho(n-1)})$$ is in $\p^*_n$ (where $(\bar q'(0)_{\rho(0)},\ldots,\bar q'(n-1)_{\rho(n-1)})\leq p$ by construction), and so we can fix a $V$-generic filter $H^*$ containing $R$, and argue as in the case $n=2$ towards a contradiction.

Thus, there is a condition $$(T_0,\dot T_1,\ldots,\dot T_{n-1})\leq (\bar q\restrict n)\mid \tau,$$ a $\p_n$-name $\dot T_n$ for a perfect tree, and a node $s_n$ on level $\bar F(n)$ such that
$$(T_0,\dot T_1,\ldots,\dot T_{n-1})\forces \dot T_n=\bar q(n)_{s_n}\wedge \dot S_n.$$  Let $\sigma=\tau\cup \{( n,s_n)\}$. So $(T_0,\dot T_1,\ldots,\dot T_{n-1},\dot T_n)\leq \bar q\mid \sigma$, and it remains to construct the required condition using Proposition~\ref{prop:strengthenDependentAssigment}.
\end{proof}
We are now ready to prove that every maximal antichain of $\p_n$ from $M$ remains maximal in $\p_n^*$.
\begin{lemma}\label{lem:antichainSuccessorCase}
Every maximal antichain of $\p_n$ from $M$ remains maximal in $\p_n^*$.
\end{lemma}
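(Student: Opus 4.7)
The plan is to argue by induction on $n$. The base case $n=1$ is exactly Proposition~\ref{prop:propertiesOfP*}(3). For the inductive step, assume the claim for all $n'\leq n$ and fix a maximal antichain $\mathcal A\in M$ of $\p_{n+1}$. Since the inductive hypothesis is precisely what underwrites Proposition~\ref{prop:UnDenseInP*n}, we may use that $\U_{n+1}$ is dense in $\p_{n+1}^*$ and so reduce to showing that every $p\in\U_{n+1}$ is compatible in $\p_{n+1}^*$ with some $a\in\mathcal A$.

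Write $p=(\T^0_{j_0}\wedge S_0,\dot\T^1_{j_1}\wedge \dot S_1,\ldots,\dot\T^n_{j_n}\wedge \dot S_n)$ and set $t=\la j_0,\ldots,j_n\ra$. Pick $f_X\in G$ forcing the indicated meets to be nonempty. By Lemma~\ref{lem:subtreeContainedinSiteration} there is $g_Y\leq f_X$ in $G$ with $g_Y(t)=(q,F)$ determined and some $\tau$ lying on $(q,F)$ with $q(0)_{\tau(0)}\leq S_0$ and $(q\restrict i)\mid(\tau\restrict i)\forces q(i)_{\tau(i)}\leq \dot S_i$ for $1\leq i\leq n$. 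Setting $r=q\mid\tau\in\p_{n+1}\cap M$, the maximality of $\mathcal A$ inside $M$ yields some $a\in\mathcal A$ and a common extension $r'\leq r,a$ in $\p_{n+1}$. Feeding $r'$ back into the tree iteration via Proposition~\ref{prop:strengthenDependentAssigment} (with $\mathscr X_\tau=\{r'\}$ and $\mathscr X_\sigma=\{q\mid\sigma\}$ for the other $\sigma$ lying on $(q,F)$) produces a determined $(\bar q,\bar F)\leq (q,F)$ together with some $\sigma'$ lying on $(\bar q,\bar F)$, extending $\tau$, with $\bar q\mid\sigma'\leq r'$. Finally, Proposition~\ref{prop:strengthenTreeCondition}(2) and a density argument allow replacing $g_Y$ by some $h_Z\in G$ with $h_Z(t)=(\bar q,\bar F)$.

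The compatibility witness I intend to exhibit in $\p_{n+1}^*$ is the cone restriction of the master condition along $\sigma'$, namely $c=((\T^0_{j_0})_{\sigma'(0)},(\dot\T^1_{j_1})_{\sigma'(1)},\ldots,(\dot\T^n_{j_n})_{\sigma'(n)})$. To see that $c$ actually defines a condition in $\p_{n+1}^*$, I would invoke the level-agreement property baked into the tree iteration via the embeddings of Propositions~\ref{prop:completeEmbeddingProductTreeLevel1} and \ref{prop:completeEmbeddingProductTree}: because $h_Z\in G$, the pair $(\bar q(n),\bar F(n))$ sits in the $j_n$-th slot of a condition in the $\bar\q(\p_{n+1})$-generic $G_{\la j_0,\ldots,j_{n-1}\ra}$, and together with the already constructed lower-level master conditions this forces $\T^n_{j_n}\cap \ex{\bar F(n)}2=\bar q(n)\cap \ex{\bar F(n)}2$; the analogous statement holds at each earlier coordinate. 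Thus $\sigma'(i)$ is a node of $\T^i_{j_i}$ and $c$ makes sense. The inclusions $(\T^i_{j_i})_{\sigma'(i)}\subseteq \bar q(i)_{\sigma'(i)}\subseteq r'(i)\subseteq a(i)$ (interpreted under the obvious forcing for $i\geq 1$) then yield $c\leq a$, while $\bar q(i)_{\sigma'(i)}\subseteq S_i$ (respectively $\dot S_i$) combined with $(\T^i_{j_i})_{\sigma'(i)}\subseteq \T^i_{j_i}$ yields $c\leq p$, establishing the desired compatibility.

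The step I expect to be the main obstacle is the last one: translating carefully between the tree-iteration filter $G$ at the level-$(n+1)$ node $t$ and the $\bar\q$-generic $G_{\la j_0,\ldots,j_{n-1}\ra}$ that actually drives the definition of $\dot\T^n_{j_n}$ through Theorem~\ref{th:genericConditionProduct}. Once this bookkeeping through the embeddings $\varphi_s$ is made precise, the combinatorial substance of the argument is already supplied by Proposition~\ref{prop:strengthenDependentAssigment}, Lemma~\ref{lem:subtreeContainedinSiteration}, and the standard level-agreement property of $\q$-fusion generics.
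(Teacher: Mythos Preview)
Your argument follows the paper's proof essentially step for step: induction on $n$, reduction to $\U_n$ via Proposition~\ref{prop:UnDenseInP*n}, application of Lemma~\ref{lem:subtreeContainedinSiteration} to locate a determined $(q,F)$ and a $\tau$ witnessing the $S_i$-inequalities, compatibility of $q\mid\tau$ with some $a\in\mathcal A$ by maximality in $M$, and then Proposition~\ref{prop:strengthenDependentAssigment} to absorb the common extension back into a determined fusion condition. The paper stops at the terse line ``$\bar g_Y$ forces that $[\ldots]$ is compatible with $p\in\mathcal A$''; your explicit description of the witness $c=((\T^0_{j_0})_{\sigma'(0)},\ldots,(\dot\T^n_{j_n})_{\sigma'(n)})$ together with the level-agreement reasoning is exactly what is implicit in that line, so you are unpacking rather than departing from the paper's route.

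One point of phrasing deserves care. You write ``a density argument allows replacing $g_Y$ by some $h_Z\in G$ with $h_Z(t)=(\bar q,\bar F)$'', where $(\bar q,\bar F)$ is the \emph{specific} condition you built from the already-chosen $g_Y\in G$ and $r'$. Genericity does not hand you that particular $h_Z$ in $G$. What density actually gives is: below any $f_X$ forcing $p\in\U_{n+1}$ there is a condition whose value at $t$ is \emph{some} determined $(\bar q,\bar F)$ admitting \emph{some} $\sigma'$ and \emph{some} $a\in\mathcal A$ with $\bar q\mid\sigma'\leq a$ together with the $S_i$-inequalities; hence such a condition lies in $G$. This is how the paper sets it up (it never fixes $f_X\in G$, only an arbitrary $f_X$ forcing membership in $\U_n$, and then produces $\bar g_Y\leq f_X$). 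Once you quantify existentially over $a,\sigma',(\bar q,\bar F)$ in the dense set, your argument is complete and coincides with the paper's.
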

\begin{proof}
The statement is true for $n=1$. So we can assume inductively that $\U_n$ is dense in $\p_n^*$. Let $\mathcal A\in M$ be a maximal antichain in $\p_n$. It suffices to show that every condition in $\U_n$ is compatible with an element of $\mathcal A$. So let $f_X$ be some condition which forces that $$(\T_{j_0}^0\wedge S_0,\dot T^1_{j_1}\wedge \dot S_1,\ldots, \dot T_{j_{n-1}}^{n-1}\wedge \dot S_{n-1})\in \U_n.$$ By Proposition~\ref{lem:subtreeContainedinSiteration}, we can strengthen $f_X$ to a condition $g_Y$ such that $g_Y(\la j_0,\ldots,j_{n-1}\ra)=(q,F)$ is determined and $\tau:n\to \ex{\lt\omega}2$ lies on $(q,F)$ such that $q(0)_{\tau(0)}\leq S_0$, and for all $1\leq i<n$, $(q\restrict i)\mid (\tau\restrict i)\forces q(i)_{\tau(i)}\leq\dot S_i$.  Since $\mathcal A$ is maximal in $\p_n$, $q\mid\tau$ is compatible with some $p\in \mathcal A$. So let $q'\leq q\mid\tau, p$. By Proposition~\ref{prop:strengthenDependentAssigment}, there is a determined condition $(\bar q,\bar F)\leq (q,F)$ with $\rho$ lying on $(\bar q,\bar F)$ and extending $\tau$ such that $\bar q\mid\rho\leq q'$. Strengthen $g_Y$ to $\bar g_Y$ with $(q,F)$ strengthened to $(\bar q,\bar F)$ as in Proposition~\ref{prop:strengthenTreeCondition}~(2). The condition $\bar g_Y$ forces that $(\T_{j_0}^0\wedge S_0,\dot T^1_{j_1}\wedge \dot S_1,\ldots, \dot T_{j_{n-1}}^{n-1}\wedge \dot S_{n-1})$ is compatible with $p\in\mathcal A$.
\end{proof}
Let $\vec P^*=\la \p_n^*\mid n<\omega\ra$ be the $\omega$-iteration made up of the extended iterations $\p_n^*$.
Let $\U$ be the subset of $\p(\vec P^*,\ex{\lt\omega}\omega)$ consisting of conditions $f_X$ such that for all $s\in X$ on level $n$, $f_X(s)\in \U_n$.
\begin{proposition}\label{prop:UisDenseInP*}
$\U$ is dense in $\p(\vec P^*,\ex{\lt\omega}\omega)$.
\end{proposition}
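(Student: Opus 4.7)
The plan is to reduce density in the tree iteration to the already-known density of $\U_n$ in $\p_n^*$ (Proposition~\ref{prop:UnDenseInP*n}), by handling the terminal nodes of the finite subtree $X$ one at a time and propagating each strengthening through the rest of the tree using Proposition~\ref{prop:strengthenTreeCondition}(1). The key structural observation is that initial segments of conditions in $\U_m$ land in $\U_k$ for $k < m$, and concatenating a condition in $\U_{n_0}$ with tail coordinates of the shape $\dot\T^k_{j_k}\wedge\dot S_k$ yields again a condition of the shape required by $\U$.

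In detail, given $f_X\in \p(\vec P^*,\ex{\lt\omega}\omega)$, I would enumerate the terminal nodes of $X$ as $s_0,\dots,s_k$ and set $f^0_X := f_X$. Inductively, assuming $f^i_X(s_j)\in \U_{\lev(s_j)}$ for every $j\leq i$, I apply Proposition~\ref{prop:UnDenseInP*n} to the condition $f^i_X(s_{i+1})\in\p^*_{\lev(s_{i+1})}$ to obtain some $q\in\U_{\lev(s_{i+1})}$ with $q\leq f^i_X(s_{i+1})$, and then use Proposition~\ref{prop:strengthenTreeCondition}(1) to extend this to a tree-wide strengthening $f^{i+1}_X\leq f^i_X$ with $f^{i+1}_X(s_{i+1})=q$. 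After $k+1$ steps, every terminal $f^{k+1}_X(s_j)$ lies in $\U_{\lev(s_j)}$, and for any non-terminal $t\in X$ below some terminal $s_j$, the coherence requirement forces $f^{k+1}_X(t)=f^{k+1}_X(s_j)\restrict \lev(t)$, which lies in $\U_{\lev(t)}$ because initial segments of $\U_m$-conditions are themselves in $\U$. Thus $f^{k+1}_X\in\U$.

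The main point requiring care is verifying that the propagation step does not destroy the $\U$-membership established at earlier terminal nodes $s_j$. Let $t'$ be the largest common predecessor of $s_j$ and $s_{i+1}$, at level $n_0$; the construction in Proposition~\ref{prop:strengthenTreeCondition}(1) sets $f^{i+1}_X(s_j)$ equal to $q\restrict n_0$ concatenated with the tail of $f^i_X(s_j)$ from level $n_0$ up to $\lev(s_j)-1$. The head $q\restrict n_0$ is in $\U_{n_0}$ because $q\in\U_{\lev(s_{i+1})}$ and $\U$ is closed under initial segments. The tail coordinates are literally inherited from $f^i_X(s_j)\in\U_{\lev(s_j)}$, so each tail coordinate still has the form $\dot\T^k_{j_k}\wedge\dot S_k$, and the forcing statement ``$\dot p_k=\dot\T^k_{j_k}\wedge\dot S_k$'' is preserved when the forcing condition is strengthened from $f^i_X(s_j)\restrict k$ to $f^{i+1}_X(s_j)\restrict k$. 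Hence $f^{i+1}_X(s_j)$ still satisfies the definition of $\U_{\lev(s_j)}$, and the induction proceeds, completing the proof.
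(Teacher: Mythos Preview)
Your proposal is correct and is exactly the argument the paper leaves to the reader: the proposition is stated without proof, and the intended justification is precisely the terminal-node-by-terminal-node strengthening via Proposition~\ref{prop:UnDenseInP*n} and Proposition~\ref{prop:strengthenTreeCondition}(1), mirroring the proof of Proposition~\ref{prop:determinedInTreeDense}. Your check that propagation preserves $\U$-membership at previously processed terminal nodes (because $q\restrict n_0\leq f^i_X(s_j)\restrict n_0$ and forcing statements persist under strengthening) is the only point requiring care, and you handle it correctly.

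One small indexing slip: with $f^0_X:=f_X$, your inductive hypothesis ``$f^i_X(s_j)\in\U_{\lev(s_j)}$ for every $j\leq i$'' is not satisfied at $i=0$. You want either to start the induction at $i=-1$ (vacuous hypothesis) and process $s_{i+1}$ at each step, or to phrase the hypothesis as ``for every $j<i$'' and process $s_i$ to obtain $f^{i+1}_X$. This is cosmetic and does not affect the argument.
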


Next, we will prove the analogue of Proposition~\ref{prop:propertiesOfProductP*}.
\begin{lemma}\label{lem:antichainTreeIteration}
Every maximal antichain of $\p(\vec P,\ex{\lt\omega}\omega)$ from $M$ remains maximal in $\p(\vec P^*,\ex{\lt\omega}\omega)$.
\end{lemma}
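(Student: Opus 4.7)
The plan is to adapt the argument of Lemma~\ref{lem:antichainSuccessorCase} to the tree iteration. By Proposition~\ref{prop:UisDenseInP*} it suffices to show that every $f_X\in\U$ is compatible with some $a\in\mathcal A$. For $s\in X$ on level $n$ write $f_X(s)=(\T^0_{j^s_0}\wedge S^s_0,\dot\T^1_{j^s_1}\wedge\dot S^s_1,\ldots,\dot\T^{n-1}_{j^s_{n-1}}\wedge\dot S^s_{n-1})$; coherence of $f_X$ forces the $j^s_i$ and the $\dot S^s_i$ to cohere along branches, so that $\pi(s):=\la j^s_0,\ldots,j^s_{\lev(s)-1}\ra$ is an order-preserving map $\pi:X\to\ex{\lt\omega}\omega$ and $Y:=\pi(X)$ is a finite subtree of $\ex{\lt\omega}\omega$.

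Pick $h_Z\in G\subseteq\q(\vec P,\ex{\lt\omega}\omega)$ with $Y\subseteq Z$ forcing $f_X$ to have this form. Extending Lemma~\ref{lem:subtreeContainedinSiteration} to operate simultaneously on every terminal node of $X$, I would strengthen $h_Z$ inside $G$ so that each $h_Z(\pi(s))=(q_s,F_s)$ is determined, and produce, for every $s\in X$, a $\sigma_s:\lev(s)\to\ex{\lt\omega}2$ lying on $(q_s,F_s)$ whose slice $q_s\mid\sigma_s$ lies coordinate-wise below $(S^s_0,\dot S^s_1,\ldots,\dot S^s_{\lev(s)-1})$. The critical extra requirement — mirroring the distinct-nodes trick from Proposition~\ref{prop:propertiesOfProductP*} — is that whenever $s,t\in X$ are incomparable with $\pi(s)=\pi(t)$, the maps $\sigma_s$ and $\sigma_t$ should split at the level where the branches of $s$ and $t$ first split in $X$. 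Coherence of $h_Z$ then forces $\sigma_t\restrict\lev(s)=\sigma_s$ whenever $s\leq t$ in $X$.

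With this in hand, define $\bar f_X\in\p(\vec P,\ex{\lt\omega}\omega)$ on $X$ by $\bar f_X(s):=q_s\mid\sigma_s$; coherence follows from $(q_t\mid\sigma_t)\restrict\lev(s)=(q_t\restrict\lev(s))\mid(\sigma_t\restrict\lev(s))=q_s\mid\sigma_s$. Maximality of $\mathcal A$ furnishes some $a\in\mathcal A$ with $\bar b\leq\bar f_X,a$ on a subtree $W\supseteq X\cup\dom(a)$. Using Proposition~\ref{prop:strengthenTreeCondition}, applied iteratively, strengthen $h_Z$ to some $\bar h_{Z'}$ whose value at each $\pi(s)$ (for $s\in X$) has first component below $\bar b(s)$, augmenting $Z$ similarly for $W\setminus X$; by density such a $\bar h_{Z'}$ may be chosen in $G$. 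The master-condition property $(\T^0_{\pi(s)(0)},\dot\T^1_{\pi(s)(1)},\ldots)\leq p\restrict\lev(s)$ for $(p,F)\in G_{\pi(s)}$ then yields $(\T^0_{j^s_0},\ldots,\dot\T^{n-1}_{j^s_{n-1}})\leq\bar b(s)\leq a(s)$ in $\p_n^*$; since $\dot\T^i_{j^s_i}\leq\dot S^s_i$ implies $\dot\T^i_{j^s_i}\wedge\dot S^s_i=\dot\T^i_{j^s_i}$, one concludes that $f_X(s)=(\T^0_{j^s_0},\ldots,\dot\T^{n-1}_{j^s_{n-1}})\leq a(s)$ in $\p_n^*$, and collating across $W$ produces a common strengthening of $f_X$ and $a$ in $\p(\vec P^*,\ex{\lt\omega}\omega)$.

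The main obstacle is the tree-iteration generalization of Lemma~\ref{lem:subtreeContainedinSiteration}: installing coherent witnesses $\sigma_s$ at every node of $X$ while enforcing the incomparable-node separation. I would handle this by induction on $X$ in order of increasing level, repeatedly invoking Proposition~\ref{prop:strengthenDependentAssigment} at the relevant nodes of $Z$ to install the required $\sigma_s$ and choosing $\sigma_s,\sigma_t$ to diverge at exactly the level where the branches of $s,t$ split in $X$.
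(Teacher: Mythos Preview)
Your overall architecture matches the paper's proof: reduce to $f_X\in\U$, find a determined $g_Y\in G$ with witnesses $\tau_s$ giving slices $q_{\vec j_s}\mid\tau_s$ below the $\dot S^s_i$, assemble these into $\bar f_X\in\p(\vec P,\ex{\lt\omega}\omega)$, pick $a\in\mathcal A$ and $r_W\leq a,\bar f_X$, then strengthen inside $G$ to force compatibility of $f_X$ with $a$. The paper does exactly this.

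The gap is in your final strengthening. You write: ``strengthen $h_Z$ to some $\bar h_{Z'}$ whose value at each $\pi(s)$ has first component below $\bar b(s)$.'' But you have already observed that $\pi$ need not be injective, and your own ``critical extra requirement'' forces $\sigma_s\neq\sigma_t$ whenever $s\neq t$ are incomparable with $\pi(s)=\pi(t)$. Then $\bar b(s)\leq q_{\pi(s)}\mid\sigma_s$ and $\bar b(t)\leq q_{\pi(t)}\mid\sigma_t$ lie below distinct members of the antichain $\{q_{\pi(s)}\mid\sigma:\sigma\text{ lies on }(q_{\pi(s)},F_{\pi(s)})\}$, so $\bar b(s)$ and $\bar b(t)$ are \emph{incompatible} in $\p_n$. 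No single first component of $\bar h_{Z'}(\pi(s))$ can lie below both, so Proposition~\ref{prop:strengthenTreeCondition} cannot deliver what you ask. For the same reason your master-condition conclusion $(\T^0_{j^s_0},\ldots,\dot\T^{n-1}_{j^s_{n-1}})\leq\bar b(s)$ would, applied to both $s$ and $t$, place the same tuple below two incompatible conditions.

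What is needed instead---and what the paper invokes---is the tree-adapted version of Proposition~\ref{prop:strengthenDependentAssigment}: at the single node $\pi(s)=\pi(t)$ one strengthens $(q_{\pi(s)},F_{\pi(s)})$ to a determined $(\bar q,\bar F)$ so that for \emph{each} such $s$ there is some $\rho$ lying on $(\bar q,\bar F)$, extending $\sigma_s$, with $\bar q\mid\rho\leq r_W(s)$. Different $s$'s get different $\rho$'s, and compatibility of $f_X$ with $a$ is then witnessed (exactly as in Lemma~\ref{lem:antichainSuccessorCase}) by the slices $(\T^0_{\pi(s)(0)},\ldots)\mid\rho$ rather than by the full tuple of generic trees. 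You already cite Proposition~\ref{prop:strengthenDependentAssigment} for installing the $\sigma_s$'s; it is the same tool, carried ``up the tree,'' that closes the argument.
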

\begin{proof}
Fix a maximal antichain $\mathcal A\in M$ of $\p(\vec P,\ex{\lt\omega}\omega)$. By Proposition~\ref{prop:UisDenseInP*}, it suffices to show that every condition $f_X\in \U$ is compatible with some element of $\mathcal A$. So fix $f_X\in \U$. For a node $s\in X$ on level $n$ of $\ex{\lt\omega}\omega$, let $$f_X(s)=(\T_{j_{0,s}}^0\wedge S_{0,s},\ldots,\dot \T_{j_{n-1,s}}^{n-1}\wedge \dot S_{n-1,s}).$$ Let $\la j_{0,s},\ldots,j_{n-1,s}\ra=\vec j_s$, and note that we can have $\vec j_s=\vec j_{s'}$ for $s'\neq s$. Fix a condition $g_Y\in G$ forcing that $f_X\in \U$. Repeatedly using the construction in the proof of Lemma~\ref{lem:subtreeContainedinSiteration} on terminal nodes, we can find a determined condition $\bar g_{Y}\leq g_Y$ with $\bar g_{Y}(\vec j_s)=(\bar q_{\vec j_s},\bar F_{\vec j_s})$ such that for every node $s\in X$, some $\tau_s:n\to \ex{\lt\omega}2$ lies on $(\bar q_{\vec j_s},\bar F_{\vec j_s})$ with
\begin{center}
$\bar q_{\vec j_s}(0)_{\tau_s(0)}\leq S_{0,s}$, and for all $1\leq i<n$, $(\bar q_{\vec j_s}\restrict i)\mid(\tau_s\restrict i)\forces \bar q_{\vec j_s}(i)_{\tau_s(i)}\leq \dot S_{i,s}$.
\end{center}
The construction gives $\tau_s$ satisfying that if $s'$ extends $s$, then $\tau_{s'}$ extends $\tau_s$.

Let $\bar f_X$ be a condition in $\p(\vec P,\ex{\lt\omega}\omega)$ such that for every node $s\in X$, we have $\bar f_X(s)=\bar q_{\vec j_s}\mid \tau_s$. Since $\mathcal A$ is maximal in $\p(\vec P,\ex{\lt\omega}\omega)$, there is $a_Z\in \mathcal A$ compatible with $\bar f_X$. So let $r_W\leq a_Z,\bar f_X$.

We then carry out the construction from the proof of Proposition~\ref{prop:strengthenDependentAssigment}, working our way up the tree instead of up the iteration, to obtain a condition $h_Y\leq \bar g_Y$ with $h_Y(\vec j_s)=(q_{\vec j_s},F_{\vec j_s})$ such that for every $s\in X$, some $\sigma$ lies on $(q_{\vec j_s},F_{\vec j_s})$ having $q_{\vec j_s}\mid \sigma\leq r_W(s)$. The condition $h_Y$ forces that $f_X$ and $a_Z$ are compatible. By density, some such $h_{Y}$ must be in $G$.
\end{proof}
We now summarize our results in the following lemma, which will serve as the analogue of Proposition~\ref{prop:propertiesOfP*}.
\begin{lemma}
Suppose $M$ is a suitable model and $\vec P=\la \p_n\mid n<\omega\ra\in M$ is an $\omega$-iteration of perfect posets. If $G\subseteq \q(\vec P,\ex{\lt\omega}\omega)$ is $M$-generic and $\vec P^*=\la \p_n^*\mid n<\omega\ra$ is constructed in $M[G]$ as above, then
\begin{enumerate}
\item $\vec P^*$ is an $\omega$-iteration of perfect posets,
\item $\q_0\subseteq \q_0^*$ and for all $n<\omega$, $\one_{\p_n^*}$ forces that $\dot \q_n$ is a perfect poset and $\dot \q_n^*$ extends it,
\item $\p_n\subseteq \p_n^*$,
\item every maximal antichain of $\p_n$ from $M$ remains maximal in $\p_n^*$,
\item every maximal antichain of $\p(\vec P,\ex{\lt\omega}\omega)$ from $M$ remains maximal in $\p(\vec P^*,\ex{\lt\omega}\omega)$.
\end{enumerate}
\end{lemma}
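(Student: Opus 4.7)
The plan is to observe that this lemma is essentially a summary statement collecting the results established throughout the section, and then to assemble the pieces in the correct order, verifying by simultaneous induction on $n$ those items that depend on the inductive construction.

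For item (1), I would argue by induction on $n$ that $\p_n^*$ is an iteration of perfect posets. The base case is Proposition~\ref{prop:propertiesOfP*} applied to the $M$-generic filter $G_\emptyset\subseteq \q(\q_0)^{\lt\omega}$ added by $G$ via Proposition~\ref{prop:completeEmbeddingProductTreeLevel1}. For the inductive step, assuming the construction through stage $n$ produces the master conditions $(\T_{s(0)}^0,\dot\T_{s(1)}^1,\ldots,\dot\T_{s(n-1)}^{n-1})$ for each $s$ on level $n$ of $\ex{\lt\omega}\omega$ forming a maximal antichain below which the restrictions of the $G_s\subseteq \bar\q(\p_{n+1})$ conditions live, I would invoke Theorem~\ref{th:genericConditionProduct} to conclude that the mixed $\p_n^*$-name $\tau$ built by piecing together the $\tau(G_s)$ above $\T_{s(0)}^0\wedge\cdots\wedge\dot\T_{s(n-1)}^{n-1}$ is forced to be $M[\dot H]$-generic for $\q(\dot\q_n)^{\lt\omega}$. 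Then $\dot\q_n^*$ is the canonical $\p_n^*$-name obtained by the analogue of the $\p^*$ construction of Section~\ref{sec:PerfectPosets}, and is therefore forced to be a perfect poset extending $\dot\q_n$, giving (2) as well. Once (2) holds, item (3) is immediate by induction: given $p\in\p_{n+1}$, the restriction $p\restrict n$ lies in $\p_n\subseteq \p_n^*$ by induction; $p(n)$ is a $\p_n$-name hence a $\p_n^*$-name; and since $p\restrict n\forces_{\p_n} p(n)\in\dot\q_n$ and any $V$-generic filter for $\p_n^*$ restricts via item (4) at stage $n$ to an $M$-generic filter for $\p_n$, the absoluteness of ``$p(n)\in\dot\q_n$'' together with $\dot\q_n\subseteq \dot\q_n^*$ gives $p\restrict n\forces_{\p_n^*} p(n)\in\dot\q_n^*$.

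Item (4) is exactly the content of Lemma~\ref{lem:antichainSuccessorCase}, which was proved by simultaneous induction with the density of $\U_n$ in $\p_n^*$ (Proposition~\ref{prop:UnDenseInP*n}) and Lemma~\ref{lem:subtreeContainedinSiteration}. Item (5) is exactly Lemma~\ref{lem:antichainTreeIteration}. I would simply cite these two results, observing that (4) is exactly what licenses the absoluteness step used in (3), so the simultaneous inductive structure closes up properly.

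The main conceptual work has already been done: the construction of the master conditions on each level of $\ex{\lt\omega}\omega$, the verification via Theorem~\ref{th:genericConditionProduct} that the name $\tau$ is generic, and the amalgamation-based proof of Lemma~\ref{lem:antichainSuccessorCase}. The only care needed in writing the proof is to arrange the induction so that at each stage $n$, items (2), (3) and (4) are established before moving to stage $n+1$, since the argument for $\p_{n+1}^*$ being a perfect iteration uses that $V$-generic filters for $\p_n^*$ restrict to $M$-generic filters for $\p_n$, which in turn requires (4) at stage $n$. Item (5) is then a standalone consequence of (4) via the amalgamation argument of Lemma~\ref{lem:antichainTreeIteration} and is proved last.
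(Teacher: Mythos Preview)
Your proposal is correct and matches the paper's approach exactly: the paper explicitly presents this lemma as a summary of the results already established in the section (Propositions~\ref{prop:propertiesOfP*}, \ref{prop:UnDenseInP*n}, Theorem~\ref{th:genericConditionProduct}, Lemmas~\ref{lem:subtreeContainedinSiteration}, \ref{lem:antichainSuccessorCase}, \ref{lem:antichainTreeIteration}) and gives no separate proof. Your identification of the simultaneous inductive structure---that (4) at stage $n$ is needed to make sense of (2) and (3) at stage $n+1$---is precisely how the section is organized.
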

Now we will define an $\omega$-iteration $\vec P^J=\la \p_n^J\mid n<\omega\ra$ of perfect posets such that the tree iteration $\p(\vec P^J,\ex{\lt\omega}\omega)$ is going to have the property that in a forcing extension $L[G]$ of $L$ by $\p(\vec P^J,\ex{\lt\omega}\omega)$, the only $L$-generic filters for the iteration $\p_n^J$ are the restrictions of $G$ to a level $n$ node. This will be the generalization of Jensen's uniqueness of generic filters property to tree iterations.

Let $\la S_\alpha\mid \alpha<\omega_1\ra$ be a canonically defined $\diamondsuit$-sequence. We will construct $\vec P^J$ in $\omega_1$-many steps using $\diamondsuit$ to seal maximal antichains along the way.
Let $\vec P_0$ be the $\omega$-iteration of perfect posets where $\q_0=\p_{\text{min}}$ and each $\dot \q_n=\check \p_{\text{min}}$.  Suppose the $\omega$-iteration $\vec P_\alpha=\la \p_n^\alpha\mid n<\omega\ra$ of perfect poset has been defined. We let $\vec P_\alpha=\vec P_{\alpha+1}$, unless the following happens. Suppose $S_\alpha$ codes a well-founded binary relation $E\subseteq \alpha\times\alpha$ such that the collapse of $E$ is a suitable model $M_\alpha$ with $\vec P_\alpha\in M_\alpha$ and $\alpha=\omega_1^{M_\alpha}$. In this case, we take the $L$-least $M_\alpha$-generic filter $G\subseteq \q(\vec P_\alpha,\ex{\lt\omega}\omega)$ and let $\vec P_{\alpha+1}=\vec P_\alpha^*$ as constructed in $M_\alpha[G]$. At limit stages $\lambda$, to obtain the $\omega$-iteration $\vec P_\lambda$, we let $\q^\lambda_0$ be the union of the $\q^\xi_0$ for $\xi<\lambda$, and given that we have defined $\p_n^\lambda$, we let $\dot \q_n^\lambda$ be a $\p^\lambda_n$-name for the poset that is the union of the $\dot \q_n^\xi$ for $\xi<\lambda$.  In order for this limit definition to make sense, we need to verify that each $\dot\q^\xi_n$ is a $\p^\lambda_n$-name for a perfect poset. So let's argue that this is indeed the case.

Clearly $\p_1^\lambda$ makes sense, $\p_1^\xi\subseteq \p_1^\lambda$ for every $\xi<\lambda$, and every maximal antichain  of $\p_1^\xi$ from $M_\xi$ remains maximal in $\p_1^\lambda$. So we can assume inductively that we have defined $\p_n^\lambda$ so that $\p_n^\xi\subseteq\p_n^\lambda$ for every $\xi<\lambda$, and every maximal antichain of $\p_n^\xi$ in $M_\xi$ remains maximal in $\p_n^\lambda$. Let $H^*\subseteq \p_n^\lambda$ be $V$-generic. By our assumption, the filter $H^*$ restricts to an $M_\xi$-generic filter $H$ for $\p^\xi_n$. The model $M_\xi[H]$ satisfies that $(\dot\q^\xi_n)_H$ is a perfect poset and so this must be the case in $V[H^*]$ as well since this is absolute. So we can extend the definition to $\p_{n+1}^\lambda$. It is easy to see that $\p_{n+1}^\xi\subseteq \p_{n+1}^\lambda$. It remains to argue that every maximal antichain $\mathcal A\in M_\xi$ of $\p_{n+1}^\xi$ remains maximal in $\p_{n+1}^\lambda$. Fix $p\in \p_{n+1}^\lambda$. We can assume inductively that we have already showed for every $\mu<\nu<\lambda$, that every maximal antichain of $\p_{n+1}^{\mu}$ in $M_\mu$ remains maximal in $\p_{n+1}^{\nu}$. Then, by definition of $\p^\lambda_{n+1}$, there is some $\xi<\eta<\lambda$ such that $p(0)\in \q_0^{\eta}$, and $p\restrict i\forces p(i)\in \dot \q^{\eta}_i$ for $1\leq i<n$. Since $\mathcal A$ is maximal in $\p^\eta_{n+1}$, it follows that there is some $q\in\mathcal A$ that is compatible with $p\in \p^\eta_{n+1}\subseteq \p^\lambda_{n+1}$. In particular, we have just shown the following.
\begin{lemma}\label{lem:antichainLimitCase}
If $\lambda$ is a limit ordinal and $\xi<\lambda$, then $\p_n^\xi\subseteq \p_n^\lambda$ and every maximal antichain of $\p_n^\xi$ from $M_\xi$ remains maximal in $\p_n^\lambda$.
\end{lemma}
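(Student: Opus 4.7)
The proof will proceed by a double induction: an outer induction on the iteration length $n$, and at each $n$ a transfinite induction on limit ordinals $\lambda$. The base case $n=1$ follows from Proposition~\ref{prop:propertiesOfP*}, which handles the successor steps in the construction of $\q_0^\xi \subseteq \q_0^{\xi+1}$, together with the observation that at a limit $\lambda$, every condition in $\p_1^\lambda = \q_0^\lambda = \bigcup_{\xi<\lambda}\q_0^\xi$ already lives in some $\q_0^\eta$ with $\xi < \eta < \lambda$; combined with the inductive hypothesis that maximal antichains from $M_\xi$ of $\p_1^\xi$ remain maximal in $\p_1^\eta$, this shows compatibility with some element of any such antichain.

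For the inductive step at $n$, the first task will be to justify that the limit definition of $\dot\q_n^\lambda$ as a $\p_n^\lambda$-name for a perfect poset makes sense. This is where absoluteness does the heavy lifting: by the inductive hypothesis, any $V$-generic filter $H^*$ for $\p_n^\lambda$ restricts to an $M_\xi$-generic filter $H$ for $\p_n^\xi$, so $(\dot\q_n^\xi)_H$ is a perfect poset in $M_\xi[H]$ and hence in $V[H^*]$ since being a perfect poset is a $\Sigma^1_1$-type statement about the tree and is absolute. Taking unions over $\xi<\lambda$ then yields a $\p_n^\lambda$-name $\dot\q_n^\lambda$ forced to be a perfect poset, and $\p_{n+1}^\lambda$ is defined accordingly. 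The containment $\p_{n+1}^\xi \subseteq \p_{n+1}^\lambda$ is then essentially by definition, using that $\p_n^\xi \subseteq \p_n^\lambda$ interprets $\dot\q_n^\xi$-names correctly.

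Antichain preservation for $\p_{n+1}$ will be handled by an inner transfinite induction. Assuming, for all $\mu < \nu < \lambda$, that every maximal antichain of $\p_{n+1}^\mu$ from $M_\mu$ remains maximal in $\p_{n+1}^\nu$, fix a maximal antichain $\mathcal A\in M_\xi$ of $\p_{n+1}^\xi$ and a condition $p\in \p_{n+1}^\lambda$. Since $p$ has only finitely many coordinates $p(0),\dots,p(n)$, and the union definition of each $\dot\q_i^\lambda$ is over $\xi<\lambda$, one can find a single $\eta$ with $\xi<\eta<\lambda$ such that $p\in \p_{n+1}^\eta$. The inner inductive hypothesis then gives $q\in\mathcal A$ compatible with $p$ in $\p_{n+1}^\eta$, and hence in $\p_{n+1}^\lambda$.

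The main obstacle will be bookkeeping: ensuring that the outer induction on $n$, the inner transfinite induction on $\lambda$, and the appeal to inductive hypothesis for the perfectness of $\dot\q_n^\lambda$ are all consistently aligned, and that the claim ``$p$ lives in a single intermediate $\p_{n+1}^\eta$'' is genuinely justified by the limit definition of each $\dot\q_i^\lambda$ (rather than requiring a cofinal piece of $\lambda$ to reconstruct $p$). Once this is set up carefully, each individual verification is routine and mirrors the successor case already handled by Lemma~\ref{lem:antichainSuccessorCase}.
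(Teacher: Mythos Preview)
Your proposal is correct and follows essentially the same approach as the paper: the paper's argument (given in the paragraph immediately preceding the lemma statement) also proceeds by outer induction on $n$, uses absoluteness of ``$(\dot\q_n^\xi)_H$ is a perfect poset'' between $M_\xi[H]$ and $V[H^*]$ to make sense of the limit definition, and then invokes an inner inductive hypothesis ``for every $\mu<\nu<\lambda$'' to find a single $\eta$ with $\xi<\eta<\lambda$ such that $p\in\p_{n+1}^\eta$, whereupon compatibility with $\mathcal A$ follows. Your identification of the bookkeeping as the only real obstacle is apt; the paper treats these points somewhat more tersely but the content is the same.
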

\begin{lemma}\label{lem:antichainTreeIteration}
If $\alpha<\beta<\omega_1$, then every maximal antichain  of $\p(\vec P_{\alpha},\ex{\lt\omega}\omega)$ from $M_\alpha$ remains maximal in $\p(\vec P_{\beta},\ex{\lt\omega}\omega)$.
\end{lemma}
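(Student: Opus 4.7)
The plan is to prove this by transfinite induction on $\beta > \alpha$, splitting into the successor and limit cases and in each case reducing to a result already in hand: the single-step analogue (the earlier Lemma~\ref{lem:antichainTreeIteration}) for the successor step and a finite-support decomposition for the limit step.

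For the base case $\beta = \alpha + 1$, the hypothesis $\mathcal{A} \in M_\alpha$ forces us to be at a nontrivial stage of the construction, so $\vec P_{\alpha+1} = \vec P_\alpha^*$ is obtained from an $M_\alpha$-generic filter for $\q(\vec P_\alpha,\ex{\lt\omega}\omega)$, and the single-step Lemma~\ref{lem:antichainTreeIteration} applies directly. For a successor step $\beta = \gamma + 1$ with $\gamma > \alpha$, the inductive hypothesis first gives that $\mathcal{A}$ is maximal in $\p(\vec P_\gamma, \ex{\lt\omega}\omega)$. If $S_\gamma$ does not code a suitable structure, then $\vec P_{\gamma+1} = \vec P_\gamma$ and there is nothing more to show; otherwise the crucial point is $\mathcal{A} \in M_\gamma$. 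This follows from $M_\alpha \in M_\gamma$, which is the standard coherence property already invoked in Section~\ref{sec:PerfectPosets}: since $\omega_1^{M_\gamma} = \gamma > \alpha$, the model $M_\gamma$ has $S_\alpha$ as an element and can collapse it to obtain $M_\alpha$. By transitivity of $M_\gamma$ we conclude $\mathcal{A} \in M_\gamma$, and the single-step lemma applied at stage $\gamma$ promotes $\mathcal{A}$ to a maximal antichain of $\p(\vec P_\gamma^*, \ex{\lt\omega}\omega) = \p(\vec P_{\gamma+1}, \ex{\lt\omega}\omega)$.

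For the limit case $\beta$ a limit, I would first verify that $\p(\vec P_\beta, \ex{\lt\omega}\omega) = \Union_{\xi < \beta} \p(\vec P_\xi, \ex{\lt\omega}\omega)$. A short induction on the coordinate $n$ shows $\p_n^\beta = \Union_{\xi < \beta} \p_n^\xi$, using the definition $\q_0^\beta = \Union_{\xi < \beta} \q_0^\xi$ at level $0$ and the analogous name-theoretic unions at higher coordinates (this is essentially the content of Lemma~\ref{lem:antichainLimitCase}). Since any condition $f_X$ has finite domain, its finitely many values can all be realized in a single $\p(\vec P_\xi, \ex{\lt\omega}\omega)$ with $\alpha < \xi < \beta$. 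The inductive hypothesis at $\xi$ then gives some $a \in \mathcal{A}$ compatible with $f_X$ in $\p(\vec P_\xi, \ex{\lt\omega}\omega)$, and any witnessing common refinement in that smaller poset is automatically a condition of $\p(\vec P_\beta, \ex{\lt\omega}\omega)$.

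The only real obstacle is the successor case, and specifically the need to certify $\mathcal{A} \in M_\gamma$ whenever $M_\gamma$ enters the construction nontrivially; this is inherited from the $\diamondsuit$-driven coherence of the collapsing models used throughout. All the technical heavy lifting—fusion, amalgamation of $\sigma$-assignments, and the density of $\U$—has already been packaged into the single-step Lemma~\ref{lem:antichainTreeIteration} and Proposition~\ref{prop:UisDenseInP*}, so no further forcing-theoretic work is required beyond the coherence observation and the finite-support reduction at limits.
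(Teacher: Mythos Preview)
Your proposal is correct and follows essentially the same route as the paper, which merely sketches ``The proof uses Lemma~\ref{lem:antichainSuccessorCase} for successor stages and follows easily for limit stages.'' You supply the details the paper omits: the transfinite induction on $\beta$, the coherence $M_\alpha\in M_\gamma$ needed to invoke the single-step result at a later nontrivial stage, and the finite-support reduction at limits.
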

\noindent The proof uses Lemma~\ref{lem:antichainSuccessorCase} for successor stages and follows easily for limit stages.

For $n<\omega$, let $\p_n^J=\p_n^{\omega_1}$ be constructed as all other limit stages and let $\vec \p^J=\la \p_n^J\mid n<\omega\ra$ be the corresponding $\omega$-iteration.

It will be useful for future arguments to assume that conditions in $\p_n^J$ are coded by reals.
\begin{theorem}\label{th:countableTreeIterationCCC}
The poset $\p(\vec P^J,\ex{\lt\omega}\omega)$ has the ccc.
\end{theorem}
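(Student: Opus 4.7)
The plan is to mirror the proof of Theorem~\ref{th:JensenForcingCCC} for $\p^{J\lt\omega}$, with the role of $\p_\alpha^{\lt\omega}$ and $\p^{J\lt\omega}$ replaced respectively by $\p(\vec P_\alpha,\ex{\lt\omega}\omega)$ and $\p(\vec P^J,\ex{\lt\omega}\omega)$. Fix a maximal antichain $\mathcal A$ of $\p(\vec P^J,\ex{\lt\omega}\omega)$ and choose a transitive $M\prec L_{\omega_2}$ of size $\omega_1$ with $\mathcal A,\vec P^J\in M$. Decompose $M$ as a continuous elementary chain
$$X_0\prec X_1\prec\cdots\prec X_\alpha\prec\cdots\prec M$$
of countable substructures with $\mathcal A\in X_0$. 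By the properties of the canonical $\diamondsuit$-sequence $\la S_\alpha\mid \alpha<\omega_1\ra$, there is $\alpha<\omega_1$ such that $\alpha=\omega_1\cap X_\alpha$, $S_\alpha$ codes $X_\alpha$, and the transitive collapse $M_\alpha$ of $X_\alpha$ carries $\vec P^J$ and $\mathcal A$ to $\vec P_\alpha$ and $\bar{\mathcal A}=\mathcal A\cap X_\alpha$, respectively, so that $\alpha$ is the image of $\omega_1$.

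Since conditions in $\p(\vec P_\alpha,\ex{\lt\omega}\omega)$ have finite tree support and conditions in each $\p_n^\alpha$ are (by our assumption) coded by reals, every element of $\bar{\mathcal A}$ is fixed by the collapse, so $\bar{\mathcal A}$ is a countable subset of $\mathcal A$ lying in $\p(\vec P_\alpha,\ex{\lt\omega}\omega)$, and by elementarity $M_\alpha$ believes $\bar{\mathcal A}$ is a maximal antichain of $\p(\vec P_\alpha,\ex{\lt\omega}\omega)$. At stage $\alpha$ of the construction, we chose the $L$-least $M_\alpha$-generic filter $G\subseteq \q(\vec P_\alpha,\ex{\lt\omega}\omega)$ and set $\vec P_{\alpha+1}=\vec P_\alpha^{\ast}$ as built in $M_\alpha[G]$. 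Applying Lemma~\ref{lem:antichainTreeIteration} at the step from $\alpha$ to $\alpha+1$, and then inductively at all further successor and limit stages up to $\omega_1$, we conclude that $\bar{\mathcal A}$ remains maximal in $\p(\vec P_\beta,\ex{\lt\omega}\omega)$ for every $\beta<\omega_1$.

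Finally, because any condition in $\p(\vec P^J,\ex{\lt\omega}\omega)$ has finite tree support and its values lie in finitely many $\p_n^J=\Union_{\xi<\omega_1}\p_n^\xi$, every such condition actually belongs to $\p(\vec P_\beta,\ex{\lt\omega}\omega)$ for some $\beta<\omega_1$. Hence $\bar{\mathcal A}$ is maximal in the full tree iteration $\p(\vec P^J,\ex{\lt\omega}\omega)$, which forces $\bar{\mathcal A}=\mathcal A$, so $\mathcal A$ is countable.

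The main obstacle is bookkeeping rather than a new idea: verifying that the reflection given by $\diamondsuit$ is compatible with the $\omega$-iteration structure, i.e.\ that $X_\alpha$ can be chosen so that its collapse really yields the $\omega$-iteration $\vec P_\alpha$ constructed at stage $\alpha$ (as opposed to some other object that merely agrees on finitely many coordinates), and that the iteration of antichain-preservation lemmas along the $\omega_1$-chain of stages—successor preservation via Lemma~\ref{lem:antichainTreeIteration} and limit preservation via the standard finite-support argument analogous to Lemma~\ref{lem:antichainLimitCase}—indeed propagates $\bar{\mathcal A}$'s maximality all the way to $\p(\vec P^J,\ex{\lt\omega}\omega)$.
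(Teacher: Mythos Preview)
Your proposal is correct and follows essentially the same approach as the paper's proof: reflect $\mathcal A$ into a countable $X_\alpha$ via $\diamondsuit$, identify the collapse with the stage-$\alpha$ data $\vec P_\alpha$, and then invoke the antichain-preservation lemma (Lemma~\ref{lem:antichainTreeIteration}) to propagate maximality of $\bar{\mathcal A}$ through all later stages up to $\p(\vec P^J,\ex{\lt\omega}\omega)$. The only difference is cosmetic: the paper packages the inductive propagation into a single citation of the lemma stating preservation for all $\beta>\alpha$, whereas you spell out the successor and limit steps separately.
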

\begin{proof}
Fix a maximal antichain $\mathcal A$ of $\p(\vec P^J,\ex{\lt\omega}\omega)$. Choose some transitive $M\prec L_{\omega_2}$ of size $\omega_1$ with $\mathcal A\in M$. We can decompose $M$ as the union of an elementary chain of countable substructures $$X_0\prec X_1\prec\cdots\prec X_\alpha\prec\cdots\prec M$$ with $\mathcal A\in X_0$. By the properties of $\diamondsuit$, there is some $\alpha$ such that $\alpha=\omega_1\cap {X_\alpha}$, $\p_n^\alpha=\p_n^J\cap X_\alpha$ for all $n<\omega$, and $S_\alpha$ codes $X_\alpha$. Let $M_\alpha$ be the transitive collapse of $X_\alpha$. Then $\p_n^\alpha$ is the image of $\p_n^J$ under the collapse and $\alpha$ is the image of $\omega_1$. Let $\bar {\mathcal A}=\mathcal A\cap X_\alpha$ be the image of $\mathcal A$.  So at stage $\alpha$ in the construction of $\vec P^J$, we chose a forcing extension $M_\alpha[G]$ of $M_\alpha$ by $\q(\vec P_{\alpha},\ex{\lt\omega}\omega)$ and let $\p^{\alpha+1}_n=(\p_n^\alpha)^*$ be constructed in $M_\alpha[G]$. By Lemma~\ref{lem:antichainTreeIteration}, $\bar {\mathcal A}$ remains maximal in all further $\p(\vec P_\beta,\ex{\lt\omega}\omega)$ for $\beta>\alpha$, and hence $\bar{\mathcal A}$ remains maximal in $\p(\vec P^J,\ex{\lt\omega}\omega)$. It follows that $\bar {\mathcal A}=\mathcal A$ is countable.
\end{proof}
\section{The generalized Kanovei-Lyubetsky Theorem}\label{sec:KanoveiLyubetskyGeneralized}
Now that we have constructed the $\omega$-iteration $\vec P^J=\la \p_n^J\mid n<\omega\ra$ and the corresponding tree iteration $\p(\vec P^J,\ex{\lt\omega}\omega)$ of Jensen's forcing, we would like to generalize the Kanovei-Lyubetsky argument of Section~\ref{sec:KanoveiLyubetsky} to prove the analogue of the ``uniqueness of generics" property for tree iterations. Namely, we will show that in a forcing extension $L[G]$ by $\p(\vec P^J,\ex{\lt\omega})$ the only $L$-generic filters for $\p_n^J$ are the restrictions of $G$ to a level $n$ node.

Suppose that $\vec P=\la \p_n\mid n<\omega\ra$ is an $\omega$-iteration of perfect posets and $H$ is a generic filter for $\p(\vec P,\ex{\lt\omega}\omega)$. Given a node $s$ on level $n$ of $\ex{\lt\omega}\omega$, let $x_s$ be the $n$-length sequence of generic reals added by $H$ on node $s$ and let $\dot x_s$ be the canonical name for $x_s$.

For the next lemma, suppose that $\vec P=\la \p_n\mid n<\omega\ra$ is an $\omega$-iteration of perfect posets that is an element of a suitable model $M$. We should think of $\vec P$ as one of the $\omega$-iterations $\vec P_\alpha$ arising at stage $\alpha$ in the construction of the $\omega$-iteration $\vec P^J$ and we should think of $M$ as the model $M_\alpha$ from that stage.
\begin{lemma}\label{le:denseToAvoidBranchesInTreeIteration}
In $M$, suppose that $\dot r$ is a $\p(\vec P,\ex{\lt\omega}\omega)$-name for an $n$-length sequence of reals such that for all nodes $s$ on level $n$ of $\ex{\lt\omega}\omega$, $$\one\forces_{\p(\vec P,\ex{\lt\omega}\omega)} \dot r\neq \dot x_s.$$ Then in a forcing extension $M[G]$ by $\q(\vec P,\ex{\lt\omega}\omega)$, for every node $s$ on level $n$ of $\ex{\lt\omega}\omega$, the set of conditions forcing the statement
\begin{center}
$\Phi(s):=$``If $\dot r$ is $M[G]$-generic for $\p^*_n$, then $\la \T^0_{s(0)},\dot \T^1_{s(1)},\ldots,\dot \T^{n-1}_{s(n-1)}\ra$ is not in the filter determined by $\dot r$."
\end{center}
is dense in $\p(\vec P^*,\ex{\lt\omega}\omega)$.
\end{lemma}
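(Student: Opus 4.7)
I would mimic the proof of Theorem~\ref{th:denseToAvoidBranchesInProduct}, with the product $\p^{\lt\omega}$ replaced by the tree iteration $\p(\vec P,\ex{\lt\omega}\omega)$ and the single-coordinate perfect-tree manipulations replaced by the iterated amalgamation tools of Section~\ref{sec:treeIterations} (Lemma~\ref{lem:subtreeContainedinSiteration}, Propositions~\ref{prop:strengthenDependentAssigment} and~\ref{prop:strengthenTreeCondition}). Fix a node $s$ on level $n$ and a condition $f_X \in \p(\vec P^*,\ex{\lt\omega}\omega)$. First, using density of $\U$ (Proposition~\ref{prop:UisDenseInP*}), I would reduce to $f_X \in \U$ with $s \in X$ and
\[
f_X(s) = \la \T^0_{s(0)} \wedge S_0,\, \dot{\T}^1_{s(1)} \wedge \dot S_1,\, \ldots,\, \dot{\T}^{n-1}_{s(n-1)} \wedge \dot S_{n-1} \ra,
\]
handling the cases in which the trees at $s$ use indices differing from the coordinates of $s$ by a simpler variant, since such $f_X$ already commits $f_X(s)$ to be incompatible in $\p_n^*$ with the master condition at $s$.

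I would then fix $g_Y \in G$ forcing this $\U$-form and, via Lemma~\ref{lem:subtreeContainedinSiteration} together with Proposition~\ref{prop:InsideDenseOpenSet}, strengthen to a determined $g_{Y'}$ whose value $g_{Y'}(s) = (q, F)$ admits some $\tau$ lying on $(q, F)$ witnessing $q(0)_{\tau(0)} \leq S_0$ and $(q\restrict i)\mid(\tau\restrict i) \forces q(i)_{\tau(i)} \leq \dot S_i$ for $1 \leq i < n$. The heart of the argument is then to build an auxiliary condition $a \in \p(\vec P,\ex{\lt\omega}\omega)$ that spreads the amalgamation branches of $(q, F)$ across fresh level-$n$ nodes of $\ex{\lt\omega}\omega$: for each $\sigma$ lying on $(q, F)$ I would choose a distinct level-$n$ destination $t_\sigma$ with $t_\tau = s$, $\sigma \mapsto t_\sigma$ injective, and $t_\sigma \restrict i = t_{\sigma'} \restrict i$ exactly when $\sigma \restrict i = \sigma' \restrict i$---possible since $\ex{\lt\omega}\omega$ branches infinitely at every node. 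The $X_F$-assignment property (Definition~\ref{def:sigmaAssigment}) then guarantees that placing $q\mid\sigma$ at $t_\sigma$ extends to a coherent $a \in \p(\vec P,\ex{\lt\omega}\omega)$.

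With $a$ in hand, the hypothesis $\one \forces \dot r \neq \dot x_{t_\sigma}$ powers a standard fusion-style refinement of $a$ to $a_g \leq a$ forcing, for every $\sigma$, that $\dot r$ is not the branch sequence through $a_g(t_\sigma) \in \p_n$. Setting $q_\sigma := a_g(t_\sigma) \leq q\mid\sigma$ and feeding the singletons $\mathscr X_\sigma = \{q_\sigma\}$ into Proposition~\ref{prop:strengthenDependentAssigment} yields a determined $(\bar q, \bar F) \leq (q, F)$ in which every $\rho$ lying on $(\bar q,\bar F)$ extends some $\sigma$ with $\bar q \mid \rho \leq q_\sigma$; Proposition~\ref{prop:strengthenTreeCondition}(2) lifts this to $h_{Y'} \leq g_{Y'}$ with $h_{Y'}(s) = (\bar q,\bar F)$, and by density some such $h_{Y'}$ lies in $G$. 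To verify $\Phi(s)$: the master-condition property gives $\la \T^0_{s(0)},\dot{\T}^1_{s(1)},\ldots,\dot{\T}^{n-1}_{s(n-1)}\ra \leq \bar q$ in $\p_n^*$, so any $\p_n^*$-generic determined by $\dot r$ that contained this master condition would contain $\bar q$ and hence, restricting to $\p_n$ via Lemma~\ref{lem:antichainTreeIteration}, meet some $\bar q\mid\rho \leq q_\sigma$, while absoluteness carries the forced ``$\dot r$ avoids $q_\sigma$'' statement upward, contradicting $q_\sigma$ being in the filter. I expect the fusion refinement from $a$ to $a_g$ to be the main obstacle: the coordinates of $\p_n$ are linked through the iteration's names $\dot S_i$ and $\dot\q_i$, so the thinning must descend coordinate-by-coordinate through the iteration while simultaneously respecting the tree-iteration coherence across the various $t_\sigma$'s---every thinning at $t_\sigma$ must be propagated to shared ancestors and to sibling nodes $t_{\sigma'}$ sharing those ancestors---so that $\{q_\sigma\}$ remains a valid $X_F$-assignment eligible for Proposition~\ref{prop:strengthenDependentAssigment}.
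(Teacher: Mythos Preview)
Your overall strategy matches the paper's: reduce to $\U$, pick a fusion condition $g_Y\in G$ controlling the node carrying index-sequence equal to the target, spread the finitely many $q\mid\sigma$'s across distinct level-$n$ nodes of $\ex{\lt\omega}\omega$ to form an auxiliary $\p(\vec P,\ex{\lt\omega}\omega)$-condition, use $\one\forces\dot r\neq\dot x_{t_\sigma}$ to strengthen that auxiliary condition so that it forces $\dot r$ to miss each $q_\sigma$, and then feed the $q_\sigma$'s back into Proposition~\ref{prop:strengthenDependentAssigment} to get a fusion condition in $G$ witnessing compatibility and forcing $\Phi$. This is exactly the skeleton of the paper's argument, and your identification of the coordinate-by-coordinate thinning as the delicate step is accurate.

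There is, however, a genuine error in your case split. You claim that when the $\U$-indices at node $s$ differ from the coordinates of $s$, the case is ``simpler'' because $f_X(s)$ is then incompatible in $\p_n^*$ with the master condition $\la\T^0_{s(0)},\dot\T^1_{s(1)},\ldots\ra$. That incompatibility is true, but it is irrelevant to $\Phi(s)$: the statement $\Phi(s)$ concerns the filter determined by $\dot r$, which has nothing to do with the value $f_X(s)$ or with the coordinate-$s$ slice $x_s$ of the tree-iteration generic. No amount of incompatibility between $f_X(s)$ and the master condition forces anything about whether $\dot r$'s filter contains that master condition. The paper avoids this issue entirely: it never asks the target node $d$ to lie in $X$, nor that the indices at $d$ match $d$. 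Instead it \emph{adds} a fresh node $s$ to $X$ with $\vec j_s=d$ (one can always place the master condition at a new node), and that node serves as the anchor $t_\tau$ for the spread. Your reduction ``$s\in X$ with matching indices'' is not always attainable by strengthening, and the fallback you propose does not work; the fix is simply to anchor at a node whose index-sequence is $s$, which need not be $s$ itself.

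A second, smaller point: you do not say how $f_X$ and $a_g$ end up compatible in $\p(\vec P^*,\ex{\lt\omega}\omega)$, which is what lets you pass to a common extension forcing $\Phi(s)$. The paper handles this by building $a^g_A$ with $X\subseteq A$ and $a^g_A(s')\leq q_{\vec j_{s'}}\mid\tau_{s'}$ for \emph{every} $s'\in X$, and then arranging $\bar g_Y\in G$ so that for each such $s'$ some branch of $(\bar q_{\vec j_{s'}},\bar F_{\vec j_{s'}})$ lies below $a^g_A(s')$; this is what makes $f_X$ and $a^g_A$ compatible. Your construction treats only the anchor node, so you should either enlarge $a_g$ to cover all of $X$ as the paper does, or argue carefully that the fresh $t_\sigma$'s (and all their proper ancestors not already on the branch to the anchor) can be chosen outside $X$, reducing the overlap to the branch below the anchor where compatibility follows from $h_{Y'}\in G$.
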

\begin{proof}
Fix a condition $f_X\in \p(\vec P^*,\ex{\lt\omega}\omega)$ and a node $d$ on level $n$ of $\ex{\lt\omega}\omega$. We need to find a condition $f'_{X'}\leq f_X$ such that $f'_{X'}\forces\Phi(d)$. Since $\U$ is dense in $\p(\vec P^*,\ex{\lt\omega}\omega)$, we can assume without loss that $f_X\in\U$. For a node $s$ on level $m$ of $X$, let $$f_X(s)=(\T_{j_{0,s}}^0\wedge S_{0,s},\ldots,\dot \T_{j_{m-1,s}}^{m-1}\wedge \dot S_{m-1,s}).$$ Let $\la j_{0,s},\ldots,j_{m-1,s}\ra=\vec j_s$, and note that we can have $\vec j_s=\vec j_{s'}$. By strengthening if necessary, we can assume that there is a node $s\in X$ with $\vec j_s=d$. Repeatedly using the construction in the proof of Lemma~\ref{lem:subtreeContainedinSiteration} on terminal nodes, we can find a determined condition $g_Y\in G$ with $g_Y(\vec j_s)=(q_{\vec j_s},F_{\vec j_s})$ such that for every node $s\in X$, some $\tau_s:m\to \ex{\lt\omega}2$ lies on $(q_{\vec j_s},F_{\vec j_s})$ with
\begin{center}
$q_{\vec j_s}(0)_{\tau_s(0)}\leq S_{0,s}$, and for all $1\leq i<m$, $(q_{\vec j_s}\restrict i)\mid(\tau_s\restrict i)\forces q_{\vec j_s}(i)_{\tau_s(i)}\leq \dot S_{i,s}$.
\end{center}
The construction gives $\tau_s$ satisfying that if $s'$ extends $s$, then $\tau_{s'}$ extends $\tau_s$.

Now we are going to construct a condition $a_A^g\in \p(\vec P,\ex{\lt\omega}\omega)$ with $a_A^g(s)=q_s$ associated to $g_Y$, satisfying the following properties:
\begin{enumerate}
\item $X\subseteq A$ and for every node $s\in X$, $q_s\leq q_{\vec j_s}\mid \tau_s$.\\
There is $X\subseteq \bar A\subseteq A$ such that:
\item For every $\sigma$ which lies on $(q_d,F_d)$, there is a node $s_\sigma\in \bar A$ such that $q_{s_\sigma}\leq q_d\mid\sigma$.
\item For every node $s$ on level $n$ of $\bar A$, $a_A^g$ forces over $\p(\vec P,\ex{\lt\omega}\omega)$ the statement:
\begin{center}
``There is $i<n-1$ such that $\dot r\restrict i$ is (ground model) generic for $\p_i$ and $\dot r(i)\notin[(a_A^g(s)(i))_{\dot r\restrict i}]$."
\end{center}
\end{enumerate}
Let $a_{\bar A}$ be a condition in $\p(\vec P,\ex{\lt\omega}\omega)$ with $X\subseteq \bar A$ be such that $a_{\bar A}(s)=q_{\vec j_s}\mid \tau_s$ for every node $s\in X$, and for every $\sigma$ which lies on $(q_d,F_d)$, there is a node $s_\sigma\in\bar A$ such that $a_{\bar A}(s_\sigma)=q_d\mid\sigma$. Fix a node $s$ on level $n$ of $\bar A$ and consider a forcing extension $M[H]$ by $\p(\vec P,\ex{\lt\omega}\omega)$. By assumption, we have $r=\dot r_H\neq x_s$. So there is $i<n-1$ such that $r\restrict i=x_s\restrict i$ and $r(i)\neq x_s(i)$. So we can strengthen $a_{\bar A}$ to a condition $a_{A'}$ forcing that there is $i<n-1$ such that $\dot r\restrict i$ is $M$-generic and $\dot r(i)\notin[(a_{A'}(s)(i))_{\dot r(i)}]$. By repeating this for all the finitely many nodes on level $n$, we obtain the required condition $a_A^g$.

Next, we can carry out the construction in the proof of Proposition~\ref{prop:strengthenDependentAssigment}, moving up the tree instead of up the iteration, to obtain a condition $\bar g_Y\leq g_Y$ with $\bar g_Y(\vec j_s)=(\bar q_{\vec j_s},\bar F_{\vec j_s})$ such that for every $s\in X$, some $\sigma$ lies on $(\bar q_{\vec j_s},\bar F_{\vec j_s})$ having $\bar q_{\vec j_s}\mid\sigma\leq a_A^g(s)$. We can also ensure that for every $\tau$ which lies on $(\bar q_d,\bar F_d)$, if $\tau$ extends $\sigma$ which lies on $(q_d,F_d)$, then $\bar q_d\mid \tau\leq a^g_A(s_\sigma)$. By density, some such $\bar g_{Y}$ must be in $G$.

By construction, $\bar g_{Y}$ forces that $f_X$ is compatible with $a_A^g$. So let $\bar f_{\bar X}\leq f_X,a_A^g$. We will be done if we can  show that $a^g_A$ forces the statement $\Phi(d)$ over $\p(\vec P^*,\ex{\lt\omega}\omega)$.

Suppose $H^*\subseteq \p(\vec P^*,\ex{\lt\omega}\omega)$ is a $V$-generic filter containing $a^g_A$ ($V=M[G]$).  Now let's suppose towards a contradiction that $p=\la \T^0_{d(0)},\dot \T^1_{d(1)},\ldots, \dot \T^{n-1}_{d(n-1)}\ra$ is in the filter determined by $r$. Thus, there is some $\sigma$ which lies on $p$ such for for all $i<n$, $r(i)$ is a branch through $(p\mid\sigma(i))_{r\restrict i}$. By construction, we have that $p\mid\sigma\leq a^g_A(s)$ for some $s$ on level $n$ of $\ex{\lt\omega}\omega$. Let $H$ be the restriction of $H^*$ to an $M$-generic filter for $\p(\vec P,\ex{\lt\omega}\omega)$, and note that $a^g_A\in H$. Thus, there is some $i<n-1$ such that $r(i)$ is not a branch through $(a^g_A(s)(i))_{r\restrict i}$, which is the desired contradiction.
\end{proof}
\begin{theorem}\label{th:uniquenessOfGenericsTreeIterationCountable}
Suppose $H\subseteq \p(\vec P^J,\ex{\lt\omega}\omega)$ is $L$-generic. If an $n$-length sequence of reals $r\in L[H]$ is $L$-generic for $\p_n$, then $r=x_s$ for some node $s$ on level $n$ of $\ex{\lt\omega}\omega$.
\end{theorem}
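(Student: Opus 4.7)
The plan is to run the template of Theorem~\ref{th:uniquenessOfGenericsProduct} in the tree-iteration setting, with Lemma~\ref{le:denseToAvoidBranchesInTreeIteration} replacing Theorem~\ref{th:denseToAvoidBranchesInProduct} and the tree-iteration antichain preservation lemmas from Section~\ref{sec:treeIterations} replacing their product-forcing counterparts. Suppose for contradiction that $r \in L[H]$ is an $n$-length sequence of reals that is $L$-generic for $\p_n^J$ but satisfies $r \neq x_s$ for every node $s$ on level $n$ of $\ex{\lt\omega}\omega$, and fix a nice $\p(\vec P^J,\ex{\lt\omega}\omega)$-name $\dot r$ for $r$ with $\one \forces \dot r \neq \dot x_s$ for every such $s$.

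The first step is the standard reflection argument. Choose a transitive $M \prec L_{\omega_2}$ of size $\omega_1$ with $\dot r \in M$, decompose it as a continuous elementary chain $X_0 \prec X_1 \prec \cdots \prec M$ with $\dot r \in X_0$, and use $\diamondsuit$ together with the ccc of $\p(\vec P^J,\ex{\lt\omega}\omega)$ (Theorem~\ref{th:countableTreeIterationCCC}) to find $\alpha$ for which $\alpha = \omega_1 \cap X_\alpha$, $\p_n^\alpha = \p_n^J \cap X_\alpha$ for every $n<\omega$, $S_\alpha$ codes $X_\alpha$, and $\dot r$ is fixed by the transitive collapse to $M_\alpha$ (the ccc makes every antichain of the nice name countable, hence already in $X_\alpha$). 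At stage $\alpha$ we chose the $L$-least $M_\alpha$-generic filter $G \subseteq \q(\vec P_\alpha,\ex{\lt\omega}\omega)$ and built $\vec P_{\alpha+1} = \vec P_\alpha^*$ in $M_\alpha[G]$; by elementarity $M_\alpha$ still sees that $\one \forces_{\p(\vec P_\alpha,\ex{\lt\omega}\omega)} \dot r \neq \dot x_s$ for every level-$n$ node $s$.

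The second step applies Lemma~\ref{le:denseToAvoidBranchesInTreeIteration} in $M_\alpha[G]$: for each level-$n$ node $s$ there is a maximal antichain $\mathcal A_s \in M_\alpha[G]$ of $\p(\vec P_{\alpha+1},\ex{\lt\omega}\omega)$ whose conditions force $\Phi(s)$. Iterating the preservation lemmas from Section~\ref{sec:treeIterations} through the remaining successor and limit stages of the construction, each $\mathcal A_s$ survives as a maximal antichain of $\p(\vec P^J,\ex{\lt\omega}\omega)$, so $H$ meets every $\mathcal A_s$. Working in $L[H]$ and writing $r = \dot r_H$, the forced conclusion of $\Phi(s)$ then says: whenever $r$ is $M_\alpha[G]$-generic for $\p_n^{\alpha+1}$, the condition $\la \T^0_{s(0)}, \dot\T^1_{s(1)}, \ldots, \dot\T^{n-1}_{s(n-1)}\ra$ is not in the filter determined by $r$.

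To close the argument I would observe that $L$-genericity of $r$ for $\p_n^J$ does imply $M_\alpha[G]$-genericity for $\p_n^{\alpha+1}$: iterating Lemmas~\ref{lem:antichainSuccessorCase} and~\ref{lem:antichainLimitCase} through the construction propagates maximal antichains of $\p_n^{\alpha+1}$ from $M_\alpha[G]$ all the way up to $\p_n^J$. Consequently the filter determined by $r$ avoids every $\la \T^0_{s(0)}, \dot\T^1_{s(1)}, \ldots, \dot\T^{n-1}_{s(n-1)}\ra$ for $s$ on level $n$. But by clause (2) of the inductive tree-iteration construction in Section~\ref{sec:treeIterations} the collection of these tuples is itself a maximal antichain of $\p_n^{\alpha+1}$, hence of $\p_n^J$, contradicting $L$-genericity of $r$. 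The main obstacle is the book-keeping across the three relevant layers of genericity (over $L$, over $M_\alpha$, and over $M_\alpha[G]$) together with the verification that maximality of antichains is preserved both along the $\omega_1$-stage construction of $\vec P^J$ and under the complete embeddings that relate the tree iteration to its fusion poset.
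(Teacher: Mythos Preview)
Your proposal is correct and follows essentially the same approach as the paper's proof: the reflection via $\diamondsuit$ to a stage $\alpha$, the invocation of Lemma~\ref{le:denseToAvoidBranchesInTreeIteration} to produce the antichains $\mathcal A_s$, and the contradiction from the maximal antichain of the tuples $\la \T^0_{s(0)},\ldots,\dot\T^{n-1}_{s(n-1)}\ra$ are exactly the paper's steps. The one point worth making explicit is that the preservation of the $\mathcal A_s$ (which lie in $M_\alpha[G]$, not in $M_\alpha$) up to $\p(\vec P^J,\ex{\lt\omega}\omega)$ requires the analogue of Proposition~\ref{prop:MaxAntiMForcingExtension}, namely that $M_\alpha[G]\subseteq M_\beta$ for all later nontrivial stages $\beta$; the paper notes this explicitly, while you fold it into your ``book-keeping'' remark.
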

\begin{proof}
Let's suppose that $r$ is not one of the $x_s$ for $s$ on level $n$ of $\ex{\lt\omega}\omega$. Let $\dot r$ be a nice $\p(\vec P^J,\ex{\lt\omega}\omega)$-name for $r$ such that for all nodes $s$ on level $n$ of $\ex{\lt\omega}\omega$,
$\one\forces_{\p(\vec P^J,\ex{\lt\omega}\omega)} \dot r\neq \dot x_s$.

Choose some transitive $M\prec L_{\omega_2}$ of size $\omega_1$ with $\dot r\in M$. We can decompose $M$ as the union of a continuous elementary chain of countable substructures $$X_0\prec X_1\prec\cdots\prec X_\alpha\prec\cdots\prec M$$ with $\dot r\in X_0$. By the properties of $\diamondsuit$, there is some $\alpha$ such that $\alpha=\omega_1\cap{X_\alpha}$, $\p_n^\alpha=\p_n^J\cap X_\alpha$ for all $n<\omega$, and $S_\alpha$ codes $X_\alpha$. Let $M_\alpha$ be the transitive collapse of $X_\alpha$. Then, for every $n<\omega$, $\p_n^\alpha$ is the image of $\p_n^J$ under the collapse, and $\alpha$ is the image of $\omega_1$. The name $\dot r$ is fixed by the collapse by our assumption that we can always code conditions in $\p(\vec \p^J,\ex{\lt\omega}\omega)$ by subsets of $\omega$ and because all antichains of $\p(\vec \p^J,\ex{\lt\omega}\omega)$ are countable. So at stage $\alpha$ in the construction of $\vec P^J$, we chose a forcing extension $M_\alpha[G]$ of $M_\alpha$ by $\q(\vec P^{\alpha},\ex{\lt\omega}\omega)$ and let $\vec P_{\alpha+1}=\vec P_\alpha^*$ be constructed in $M_\alpha[G]$.

By elementarity, $M_\alpha$ satisfies that $\one_{\p(\vec P_\alpha,\ex{\lt\omega}\omega)}\forces \dot r\neq \dot x_s$ for all $s$ on level $n$ of $\ex{\lt\omega}\omega$. Thus, by Lemma~\ref{le:denseToAvoidBranchesInTreeIteration}, for every $s$ on level $n$ of $\ex{\lt\omega}\omega$, $\p(\vec \p_{\alpha+1},\ex{\lt\omega}\omega)$ has a maximal antichain $\mathcal A_s$ consisting of conditions $f_X$ forcing the statement:
\begin{center}
$\Phi(s):=$``If $\dot r$ is $M_\alpha[G]$-generic for $\p^*_n$, then $\la \T^0_{s(0)},\dot \T^1_{s(1)},\ldots,\dot \T^{n-1}_{s(n-1)}\ra$ is not in the filter determined by $\dot r$."
\end{center}
It follows, by an argument analogous to the proof of Proposition~\ref{prop:MaxAntiMForcingExtension}, that every $\mathcal A_s$ remains maximal in $\p(\vec P^J,\ex{\lt\omega}\omega)$. So let's argue that if $f_X\in \mathcal A_s$, then $f_X$ forces in $\p(\vec P^J,\ex{\lt\omega}\omega)$ that if $\dot r$ is $L$-generic, then $\la \T^0_{s(0)},\dot \T^1_{s(1)},\ldots,\dot \T^{n-1}_{s(n-1)}\ra$ is not in the filter determined by $\dot r$.

Let $\bar H^*\subseteq \p(\vec P^J,\ex{\lt\omega}\omega)$ be an $L$-generic filter containing $f_X$  and let $\bar H$ be the restriction of $\bar H^*$ to $\p(\vec P_{\alpha+1},\ex{\lt\omega}\omega)$. Let $r=\dot r_{\bar H^*}$ and suppose that it is $L$-generic for $\p_n^*$. Since $f_X\in \bar H$, it follows that $M_{\alpha}[G][\bar H]$ satisfies that $\la \T^0_{s(0)},\dot \T^1_{s(1)},\ldots,\dot \T^{n-1}_{s(n-1)}\ra$ is not in the filter determined by $r$. But then this is true in $L$ as well.

Since $H$ must meet every $\mathcal A_s$, it holds in $L[H]$ that if $r$ is $L$-generic for $\p_n^J$, then it does not meet the maximal antichain consisting of conditions $\la \T^0_{s(0)},\dot \T^1_{s(1)},\ldots,\dot \T^{n-1}_{s(n-1)}\ra$ for nodes $s$ on level $n$ of $\ex{\lt\omega}\omega$ (from level $\alpha$ of the construction), and so $r$, in fact, cannot be $L$-generic.
\end{proof}

\section{Iterating $\vec P^J$ along the tree $\ex{\lt\omega}\omega_1$}
We will now argue that the tree iteration $\p(\vec P^J,\ex{\lt\omega}\omega_1)$, where we iterate along the uncountable tree $\ex{\lt\omega}\omega_1$, shares all the key properties of the tree iteration $\p(\vec P^J,\ex{\lt\omega}\omega)$, namely it has the ccc and Theorem~\ref{th:uniquenessOfGenericsTreeIterationCountable}, concerning the uniqueness of generic filters for $\p_n^J$, continues to holds.
\begin{theorem}
The poset $\p(\vec P^J,\ex{\lt\omega}\omega_1)$ has the ccc.
\end{theorem}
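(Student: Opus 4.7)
The plan is to reduce to Theorem~\ref{th:countableTreeIterationCCC} by a $\Delta$-system argument on the supports of conditions. Suppose for contradiction that $\mathcal A\subseteq\p(\vec P^J,\ex{\lt\omega}\omega_1)$ is a maximal antichain of size $\aleph_1$. The supports $\{X : f_X\in\mathcal A\}$ form an uncountable family of finite downward-closed subtrees of $\ex{\lt\omega}\omega_1$, so the standard $\Delta$-system lemma yields an uncountable $\mathcal A'\subseteq \mathcal A$ together with a fixed finite subtree $R\subseteq\ex{\lt\omega}\omega_1$ such that $X_1\cap X_2=R$ for any distinct $f_{X_1},f_{X_2}\in\mathcal A'$.

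Next I would observe that the restrictions $\{f_X\restrict R : f_X\in\mathcal A'\}$ all live in the auxiliary forcing $\p(\vec P^J,R)$, which has the ccc. Indeed, since $R$ is finite, a level-preserving bijection identifies $R$ with a finite subtree $R'\subseteq \ex{\lt\omega}\omega$, giving an isomorphism $\p(\vec P^J,R)\cong \p(\vec P^J,R')$. Moreover, $\p(\vec P^J,R')$ sits inside $\p(\vec P^J,\ex{\lt\omega}\omega)$ in an antichain-preserving way: any common extension $g_W$ of two conditions with support in $R'$ restricts to a common extension $g_W\restrict(W\cap R')$ in the small poset, so compatibility is the same whether computed in $\p(\vec P^J,R')$ or in $\p(\vec P^J,\ex{\lt\omega}\omega)$. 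Theorem~\ref{th:countableTreeIterationCCC} then transfers the ccc to $\p(\vec P^J,R)$, so among the uncountably many restrictions two must be compatible: fix $f_{X_1},f_{X_2}\in\mathcal A'$ and $r\in\p(\vec P^J,R)$ witnessing $r\leq f_{X_1}\restrict R$ and $r\leq f_{X_2}\restrict R$.

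The remaining task is to amalgamate $r$ with $f_{X_1}$ and $f_{X_2}$ into a common extension $h_Y\in\p(\vec P^J,\ex{\lt\omega}\omega_1)$ on $Y=X_1\cup X_2$, contradicting the incompatibility of $f_{X_1}$ and $f_{X_2}$. I would set $h_Y(s)=r(s)$ for $s\in R$ and, for $s\in X_i\setminus R$ having a maximal ancestor $t^*_s\in R$, define $h_Y(s)\in\p_{\lev(s)}^J$ by splicing $r(t^*_s)$ on coordinates below $\lev(t^*_s)$ with $f_{X_i}(s)$ on the remaining coordinates; for $s\in X_i\setminus R$ with no ancestor in $R$, simply keep $h_Y(s)=f_{X_i}(s)$. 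Downward-closure of each $X_i$ together with $X_1\cap X_2=R$ forces every $Y$-ancestor of $s\in X_i\setminus R$ to lie in $X_i$, so the definition is unambiguous. The main technical point, and the only nontrivial step, is the verification of coherence $h_Y(s)\restrict\lev(s')=h_Y(s')$ and of the inequalities $h_Y(s)\leq f_{X_i}(s)$ across the boundary of $R$; a short case analysis based on where $s'$ sits relative to $t^*_s$ reduces both to the coherence of $r$ and $f_{X_i}$ together with $r(t^*_s)\leq f_{X_i}(t^*_s)=f_{X_i}(s)\restrict\lev(t^*_s)$.
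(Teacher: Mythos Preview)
Your proposal is correct and follows essentially the same approach as the paper: a $\Delta$-system argument on supports, followed by a reduction to the ccc of $\p(\vec P^J,\ex{\lt\omega}\omega)$ via the finite root. The paper compresses your amalgamation step into a single ``Clearly the collection of all such $f_X$ is an uncountable antichain,'' whereas you spell out the contrapositive explicitly by building the common extension $h_Y$; your splicing construction is exactly what justifies that claim, and your case analysis for coherence is sound (note that the ``no ancestor in $R$'' case is in fact vacuous, since the root lies in $R$). Two cosmetic remarks: you need only an uncountable antichain, not a maximal one, to start; and you need not argue that the restrictions are pairwise distinct, since a repetition already gives the compatible pair you seek.
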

\begin{proof}
Let's suppose to the contrary that there is an uncountable antichain $\mathcal A$ in $\p(\vec P^J,\ex{\lt\omega}\omega_1)$. By a $\Delta$-system argument, there must be some finite subtree $X\subseteq \ex{\lt\omega}\omega_1$ and an uncountable subset $\mathcal A'\subseteq \mathcal A$ such that for any $f_Y$ and $g_Z$ in $\mathcal A'$, $Y\cap Z=X$. Given $f_Y\in \mathcal A'$, let $f_X$ be the restriction of $f_Y$ to $X$. Clearly the collection of all such $f_X$ is an uncountable antichain in $\p(\vec P^J,\ex{\lt\omega}\omega_1)$. But then since $X$ is finite, there must be a corresponding uncountable antichain in $\p(\vec P^J,\ex{\lt\omega}\omega)$, which is impossible by Theorem~\ref{th:countableTreeIterationCCC}.
\end{proof}
\begin{theorem}
\label{th:uniquenessOfGenericsTreeIterationUncountable}
Suppose $H\subseteq \p(\vec P^J,\ex{\lt\omega}\omega_1)$ is $L$-generic. If an $n$-length sequence of reals $r\in L[H]$ is $L$-generic for $\p_n^J$, then $r=x_s$ for some node $s$ on level $n$ of $\ex{\lt\omega}\omega_1$.
\end{theorem}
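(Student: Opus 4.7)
The plan is to reduce this statement to Theorem~\ref{th:uniquenessOfGenericsTreeIterationCountable} by exploiting the ccc of $\p(\vec P^J,\ex{\lt\omega}\omega_1)$: any nice name for a real really lives on a countable subtree of $\ex{\lt\omega}\omega_1$, which can be transported into $\ex{\lt\omega}\omega$.

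First I would fix a nice $\p(\vec P^J,\ex{\lt\omega}\omega_1)$-name $\dot r$ for the hypothesized $L$-generic $n$-length sequence $r\in L[H]$. Since $\p(\vec P^J,\ex{\lt\omega}\omega_1)$ has the ccc (proved just above), $\dot r$ mentions only countably many conditions; taking the union of their finite supports and closing under ancestors yields a countable subtree $T\subseteq\ex{\lt\omega}\omega_1$ such that $\dot r$ is naturally a name in the subposet $\p(\vec P^J,T)$ of conditions supported in $T$. I would then verify that $\p(\vec P^J,T)$ is a complete subposet of $\p(\vec P^J,\ex{\lt\omega}\omega_1)$: for any $f_Y$ in the ambient forcing, the restriction $f_Y\restrict(Y\cap T)$ lies in $\p(\vec P^J,T)$, and any further strengthening of this restriction inside $\p(\vec P^J,T)$ can be amalgamated with the outside part $f_Y\restrict(Y\setminus T)$ by leaving the tails at non-$T$ nodes unchanged above the level of their parents. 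Consequently $H_T:=H\cap\p(\vec P^J,T)$ is $L$-generic for $\p(\vec P^J,T)$, and $r\in L[H_T]$.

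Next I would choose a level-preserving tree isomorphism $\pi$ from $T$ onto some subtree $T'\subseteq\ex{\lt\omega}\omega$, which exists because $T$ is countable of height $\omega$. The map $\pi$ induces a forcing isomorphism $\Pi:\p(\vec P^J,T)\to\p(\vec P^J,T')$, and by the same amalgamation argument $\p(\vec P^J,T')$ is a complete subposet of $\p(\vec P^J,\ex{\lt\omega}\omega)$. Transporting $H_T$ across $\Pi$ gives an $L$-generic filter $H'$ for $\p(\vec P^J,T')$ with $r=(\Pi(\dot r))_{H'}$ and $L[H']=L[H_T]$. In some generic extension of $L[H]$ (obtained, say, by forcing with $\p(\vec P^J,\ex{\lt\omega}\omega)/H'$ over $L[H]$) I would extend $H'$ to an $L$-generic filter $H^\omega$ for the full $\p(\vec P^J,\ex{\lt\omega}\omega)$. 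The real $r$ is unchanged by this transport and extension and is still $L$-generic for $\p_n^J$ in $L[H^\omega]$, so Theorem~\ref{th:uniquenessOfGenericsTreeIterationCountable} supplies a level-$n$ node $t$ of $\ex{\lt\omega}\omega$ such that $r$ equals the generic sequence at $t$ added by $H^\omega$.

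Finally I would argue that $t\in T'$; then $s:=\pi^{-1}(t)$ is the desired level-$n$ node of $\ex{\lt\omega}\omega_1$ with $r=x_s$. If instead $t\notin T'$, let $k$ be the largest index with $t\restrict k\in T'$, so $k<n$ and $t\restrict(k+1)\notin T'$; then the $k$-th coordinate of $r$ equals the fresh generic real added at node $t\restrict(k+1)$ by the extension from $H'$ to $H^\omega$, which is generic over $L[H']=L[H_T]$ and hence not in $L[H_T]$, contradicting $r\in L[H_T]$. The hard part will be the complete-subposet verification, which although intuitively clear from the finite-support tree structure requires a careful amalgamation argument to glue inside-$T$ and outside-$T$ parts of ambient conditions while respecting the prefix-agreement clause of the tree iteration; once this is in hand, the rest of the argument follows routinely from the countable-tree theorem and standard genericity considerations.
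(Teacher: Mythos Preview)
Your approach is the same as the paper's in spirit---use the ccc to localize $\dot r$ to a countable subtree and then invoke Theorem~\ref{th:uniquenessOfGenericsTreeIterationCountable}---but you take an unnecessary detour that the paper avoids with a single observation. Rather than mapping $T$ into $\ex{\lt\omega}\omega$, extending $H'$ to a full generic $H^\omega$ by further forcing, and then arguing that the witnessing node $t$ must lie in $T'$, the paper simply \emph{enlarges} the countable subtree $\mathscr T$ inside $\ex{\lt\omega}\omega_1$ so that $\mathscr T$ is itself isomorphic to $\ex{\lt\omega}\omega$. Then the restriction $\bar H$ of $H$ to $\p(\vec P^J,\mathscr T)$ is already $L$-generic for a copy of $\p(\vec P^J,\ex{\lt\omega}\omega)$, and Theorem~\ref{th:uniquenessOfGenericsTreeIterationCountable} delivers $r=x_s$ for some $s\in\mathscr T\subseteq\ex{\lt\omega}\omega_1$ directly, with no extension step and no need to rule out nodes outside the subtree.

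Your route does work, but the final step deserves more care than you give it. To conclude that the $k$-th coordinate of $r$ is not in $L[H']$ when $t\restrict(k+1)\notin T'$, you assert that this real is ``generic over $L[H']$''. What is immediate is that it is generic for the $(k{+}1)$-st iteration step over $L[x_{t\restrict k}]$; to upgrade this to genericity over all of $L[H']$ you need that the single node $t\restrict(k+1)$ contributes a complete subforcing of the quotient $\p(\vec P^J,\ex{\lt\omega}\omega)/H'$, which follows from the same finite-support amalgamation you flag as the ``hard part'' but should be stated. The paper's enlargement trick sidesteps this entirely: since $\mathscr T$ already has the full tree shape, the node $s$ produced by the countable theorem is automatically a node of $\ex{\lt\omega}\omega_1$ and the generic sequence $x_s$ computed from $\bar H$ agrees with the one computed from $H$.
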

\begin{proof}
Suppose that $r$ is an $n$-length sequence of reals in $L[H]$ that is $L$-generic for $\p_n^J$. Let $\dot r$ be a nice $\p(\vec P^J,\ex{\lt\omega}\omega_1)$-name for $r$. Since $\p(\vec P^J,\ex{\lt\omega}\omega_1)$ has the ccc by Theorem~\ref{th:uniquenessOfGenericsTreeIterationUncountable}, it follows that conditions in the name $\dot r$ use only countably many nodes of $\ex{\lt\omega}\omega_1$. Thus, $\dot r$ is a $\p(\vec P,\mathscr T)$-name, where $\mathscr T$ is a countable subtree of $\ex{\lt\omega}\omega_1$. We may assume that $\mathscr T$ is isomorphic to $\ex{\lt\omega}\omega$, and so $\p(\vec P,\mathscr T)$ is isomorphic to $\p(\vec P,\ex{\lt\omega}\omega)$. Let $\bar H$ be the restriction of $H$ to $\p(\vec P,\mathscr T)$, which is $L$-generic for it. Thus, $r\in L[\bar H]$, from which it follows, by Theorem~\ref{th:uniquenessOfGenericsTreeIterationCountable}, that $r=x_s$ for some $s\in\mathscr T$.
\end{proof}
\section{Symmetric models of $\ZF+\AC_\omega +\neg \DC$}\label{sec:symmetricModels}
We will construct a symmetric submodel of a forcing extension $L[G]$ by $\p(\vec P^J,\ex{\lt\omega}\omega_1)$ in which $\ZF+\AC_\omega$ holds, but $\DC$ fails. The reals of this model will be a model of second-order arithmetic in which $\Z_2+\Sigma^1_\infty$-$\AC$ holds, but $\Pi^1_2$-$\DC$ fails. Let's start with a brief discussion of the method of constructing symmetric submodels of a forcing extension, which goes all the way back to Cohen's pioneering work on forcing.

Suppose that $\p$ is a forcing notion. Recall that if $\pi$ is an automorphism of $\p$, then we can apply $\pi$ recursively to conditions in a $\p$-name $\sigma$ to obtain the $\p$-name $\pi(\sigma)$. It is not difficult to see by induction on complexity of formulas, that for every formula $\varphi$ and condition $p\in \p$, $p\forces \varphi(\sigma)$ if and only if $\pi(p)\forces \varphi(\pi(\sigma))$. Fix some group $\mathcal G$ of automorphisms of $\p$. Recall that a filter $\mathscr F$ on subgroups of a group $\mathcal G$ is \emph{normal} if whenever $g\in \mathcal G$ and $\mathcal K\in \mathscr F$, then $g\mathcal K g^{-1}\in \mathscr F$. Let's fix a normal filter $\mathscr F$ on the subgroups of $\mathcal G$. The subgroup of $\mathcal G$ fixing a particular $\p$-name $\sigma$, consisting of automorphisms $\pi$ such that $\pi(\sigma)=\sigma$, is called $\sym(\sigma)$. If $\sym(\sigma)$ is in $\mathscr F$, then we say that $\sigma$ is a \emph{symmetric} $\p$-name. We recursively define that a $\p$-name is \emph{hereditarily symmetric} when it is symmetric and all names inside it are hereditarily symmetric. Let $\HS$ be the collection of all hereditarily symmetric $\p$-names. Let $G\subseteq \p$ be $V$-generic. The \emph{symmetric model} $$N=\{\sigma_G\mid \sigma\in \HS\}$$ associated to the group of autmorphisms $\mathcal G$ and the normal filter $\mathscr F$ consists of the interpretations of all hereditarily symmetric $\p$-names. It is a standard result that $N\models\ZF$.

We would like to review now a classical construction of a symmetric model in which countable choice holds but dependent choice fails, which is due to Jensen~\cite{jensen:ACplusNotDC}. Let $\p$ be the forcing adding a set of Cohen subsets of $\omega_1$, indexed by the tree $\ex{\lt\omega}\omega_1$, with countable conditions. We will call the poset adding a Cohen subset to $\omega_1$ with countable conditions $\Add(\omega_1,1)$. So, more precisely, conditions in $\p$ are functions $f_X:X\to \Add(\omega_1,1)$, where $X$ is some countable subtree of $\ex{\lt\omega}\omega_1$, ordered so that $g_Y\leq f_X$ whenever $X\subseteq Y$ and for all $t\in X$, $g_Y(t)\leq f_X(t)$ in $\Add(\omega_1,1)$. Note that $\p$ is countably closed. Next, we need to select an appropriate group of automorphisms of $\p$.

Let $\Aut(\ex{\lt\omega}\omega_1)$ be the automorphism group of the tree $\ex{\lt\omega}\omega_1$. Every automorphism $\pi\in \Aut(\ex{\lt\omega}\omega_1)$ induces an automorphism $\pi^*$ of $\p$, so that $\pi^*(f_X)=f_{\pi\image X}$ with $f_{\pi\image X}(t)=f_X(\pi^{-1}(t))$. Let $$\mathcal G=\{\pi^*\mid \pi\in\Aut(\ex{\lt\omega}\omega_1)\}.$$ Next, we need to select an appropriate filter on the subroups of $\mathcal G$. Let's call a subtree $T$ of the tree $\ex{\lt\omega}\omega_1$ \emph{useful} if it is countable and doesn't have an infinite branch. Given a useful tree $T$, let $H_T$ be the subgroup of $\mathcal G$ consisting of all automorphisms $\pi^*$ such that $\pi$ point-wise fixes $T$. Let $\mathscr F$ be the filter on the subgroups of $\mathcal G$ generated by all such subgroups $H_T$ with $T$ a useful tree. To see that $\mathscr F$ is normal, observe that if $T$ is a useful tree and $H_T\subseteq K\in\mathscr F$, then $\pi\image T$ is useful and $H_{\pi\image T}\subseteq \pi^*K\pi^{*-1}$.

Now, let $G\subseteq\p$ be $V$-generic and let $N=\{\sigma_G\mid \sigma\in \HS\}$ be the symmetric model associated to $\mathcal G$ and $\mathscr F$. In $V[G]$, let $\T$ be the tree isomorphic to $\ex{\lt\omega}\omega_1$ whose nodes are the Cohen subsets of $\omega_1$ added by $G$. Given a node $t\in\ex{\lt\omega}\omega_1$, let $\sigma_t$ be the canonical name for the Cohen subset of $\omega_1$ added on node $t$ by $G$. Let $\dot \T=\{( {\rm op}(\sigma_s,\sigma_t),\one_{\p})\mid s\leq t\text{ in }\ex{\lt\omega}\omega_1\}$, where ${\rm op}(\sigma_s,\sigma_t)$ is the canonical $\p$-name for the ordered pair of $\sigma_s$ and $\sigma_t$. Clearly $\dot \T_G=\T$. Fix any $\pi^*\in \mathcal G$, and observe that $\pi^*(\dot \T)=\{({\rm op}(\sigma_{\pi(s)},\sigma_{\pi(t)}),\one_{\p})\mid s\leq t\text{ in }\ex{\lt\omega}\omega_1\}=\dot \T$. Also, any automorphism $\pi^*$ with $\pi(s)=s$ fixes $\sigma_s$. This shows that $\dot \T\in \HS$, and hence $\T$ is in the symmetric model $N$. Using that a symmetric name must be fixed by a group of automorphisms point-wise fixing a tree without an infinite branch, we can show that no infinite branch through $\mathcal T$ can have a symmetric name, and so $\DC$ fails in $N$. Using that $\p$ is countably closed, we can show that $\AC_\omega$ holds in $N$.

We are now ready to construct a symmetric model $N$ of a forcing extension $L[G]$ by $\p(\vec P^J,\ex{\lt\omega}\omega_1)$ with the same properties. First, observe that every $\pi\in\Aut(\ex{\lt\omega}\omega_1)$ induces an automorphism $\pi^*$ of $\p(\vec P^J,\ex{\lt\omega}\omega_1)$ so that $\pi^*(f_X)=f_{\pi\image X}$ with $f_{\pi\image X}(t)=f_X(\pi^{-1}(t))$. As above, let $\mathcal G$ be the group of automorphisms $\pi^*$ for $\pi\in \Aut(\ex{\lt\omega}\omega_1)$ and let $\mathscr F$ be the normal filter generated by the subgroups $H_T$ of $\mathcal G$, consisting of all automorphisms point-wise fixing some useful subtree $T$ of $\omega_1\ex{\lt\omega}$. Note that in the present situation the domains of conditions $f_X$ are finite subtrees $X$, while we use countable trees $T$ to determine which names are symmetric. Let $N\subseteq L[G]$ be the symmetric model determined by the group of automorphisms $\mathcal G$ and the filter $\mathscr F$. In $L[G]$, consider the tree $\mathcal T$, isomorphic to $\ex{\lt\omega}\omega_1$ whose nodes are the generic sequences of reals added by $G$. The argument we gave above generalizes in a straightforward way to show that $\dot\T$, the canonical name for $\T$, is hereditarily symmetric, and hence $\T\in N$.

Suppose $T$ is any subtree of $\omega_1\ex{\lt\omega}$. If $f_X$ is a condition in $\p(\vec P^J,\ex{\lt\omega}\omega_1)$, we will denote by $f_{X\cap T}$, the restriction of $f_X$ to nodes in $T$, i.e. $f_{X\cap T}$ has domain $X\cap T$ and $f_{X\cap T}(t)=f_X(t)$. We let $$G_T=\{f_X\in G\mid X\subseteq T\}.$$ Note that if $T$ is useful, then the canonical name for $G_T$ is fixed by all elements of the subgroup $H_T$, and therefore $G_T\in N$. Thus, each $L[G_T]\subseteq N$.

If $\sigma$ is a $\p(\vec P^J,\ex{\lt\omega}\omega_1)$-name and $H_T\subseteq \sym(\sigma)$ for a useful tree $T$, we will say that $T$ \emph{witnesses} that $\sigma$ is symmetric.
\begin{proposition}\label{prop:countableTreeDecides}
Suppose that $\sigma$ is a symmetric $\p(\vec P^J,\ex{\lt\omega}\omega_1)$-name, as witnessed by a useful tree $T$, and $f_X\forces\varphi(\sigma,\check a)$. Then $f_{X\cap T}\forces\varphi(\sigma,\check a)$.
\end{proposition}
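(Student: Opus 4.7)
The plan is to argue by contradiction: if $f_{X\cap T}\not\forces\varphi(\sigma,\check a)$, choose $g_Y\leq f_{X\cap T}$ with $g_Y\forces\neg\varphi(\sigma,\check a)$, and produce an automorphism $\pi^*\in H_T$ such that $\pi^*(g_Y)$ is compatible with $f_X$. Since $T$ witnesses the symmetry of $\sigma$, we have $\pi^*(\sigma)=\sigma$, and $\pi^*(\check a)=\check a$ automatically, so the standard automorphism lemma yields $\pi^*(g_Y)\forces\neg\varphi(\sigma,\check a)$. A common extension of $\pi^*(g_Y)$ and $f_X$ then forces both $\varphi$ and $\neg\varphi$, a contradiction.

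To build $\pi$, note that $Y\setminus T$ has finitely many minimal nodes, each an immediate successor of some node of $T$, and similarly $X\setminus T$ has finitely many minimal nodes. Every $p\in T$ has uncountably many immediate successors in $\ex{\lt\omega}\omega_1$, only countably many of which lie in $T$, so uncountably many lie outside $T$. By permuting the immediate successors of each relevant $p\in T$ lying outside $T$, I can send every minimal node of $Y\setminus T$ to some immediate successor of its parent that lies outside $T$ and outside the (finite) collection of minimal nodes of $X\setminus T$ below the same parent. Extending arbitrarily by tree isomorphisms on the subtrees below these exit nodes (and by the identity elsewhere), I obtain $\pi\in\Aut(\ex{\lt\omega}\omega_1)$ that fixes $T$ pointwise and satisfies $\pi\image(Y\setminus T)\cap X=\emptyset$; thus $\pi\image Y$ and $X$ intersect only inside $X\cap T$.

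The main obstacle will be checking that $\pi^*(g_Y)$ and $f_X$ admit a common extension, since the coherence clause in the definition of $\p(\vec P^J,\ex{\lt\omega}\omega_1)$ demands equality of restrictions rather than mere compatibility. Set $Z=X\cup\pi\image Y$, and for each $t\in Z$ let $s^*_t$ be the greatest $T$-ancestor of $t$. I would define $h_Z(t)$ by: $h_Z(t)=g_Y(t)$ for $t\in X\cap T$ (which strengthens $f_X(t)$); $h_Z(t)=g_Y(\pi^{-1}(t))$ for $t\in\pi\image Y\setminus X$; and for $t\in X\setminus T$, form $h_Z(t)$ by replacing the $\p_{\lev(s^*_t)}$-initial segment of $f_X(t)$ with $g_Y(s^*_t)$ while keeping all coordinates above level $\lev(s^*_t)$ unchanged. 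This replacement is legitimate because, in an iteration $\p_n$, a condition $p$ whose restriction $p\restrict m$ has a strengthening $q\in\p_m$ can be rewritten with $q$ below level $m$ without affecting higher coordinates. Coherence of $h_Z$ then reduces to coherence of $g_Y$ and $f_X$ together with the key identity $g_Y(s^*_t)\leq f_X(s^*_t)=f_X(t)\restrict\lev(s^*_t)$, and a direct case analysis on the relative positions of $s\leq t$ in $Z$ with respect to $T,X,\pi\image Y$ shows $h_Z\leq f_X$ and $h_Z\leq\pi^*(g_Y)$, yielding the contradiction.
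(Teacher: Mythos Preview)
Your proof is correct and follows the same approach as the paper's: argue by contradiction, pick $g_Y\leq f_{X\cap T}$ forcing $\neg\varphi(\sigma,\check a)$, and use an automorphism $\pi^*\in H_T$ to move $g_Y$ to a condition compatible with $f_X$. The paper's proof is terser---it simply asserts that such a $\pi$ exists and that compatibility of $\pi^*(g_Y)$ with $f_X$ holds ``by construction''---whereas you spell out both the construction of $\pi$ via permutations of out-of-$T$ successors and the explicit common extension $h_Z$ using the greatest-$T$-ancestor splicing; this extra care is appropriate given the coherence clause in the definition of the tree iteration, but it does not constitute a different argument.
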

\begin{proof}
Suppose it is not the case that $f_{X\cap T}\forces\varphi(\sigma,\check a)$. Then there is a condition $g_Y\leq f_{X\cap T}$ which forces $\neg\varphi(\sigma,\check a)$. So $g_Y$ is incompatible with $f_X$ and the incompatibility must occur on nodes outside of $T$. Let $\pi\in\Aut(\ex{\lt\omega}\omega_1)$ be an automorphism which switches these nodes in $Y\setminus T$ with some nodes outside both $T$ and $X$. Clearly $\pi^*\in H_T$, and so $\pi^*(\sigma)=\sigma$. It follows that $\pi^*(g_Y)\forces\neg\varphi(\sigma,\check a)$. But, by construction, $\pi^*(g_Y)$ is compatible with $f_X$, which is the desired contradiction. Thus, we have shown that $f_{X\cap T}\forces \varphi(\sigma,\check a)$.
\end{proof}
\begin{proposition}\label{prop:subsetsOfOrdinalsSymmetricModels}
Suppose that $\sigma$ is a symmetric $\p(\vec P^J,\ex{\lt\omega}\omega_1)$-name, as witnessed by a useful tree $T$, and $A=\sigma_G$ is a set of ordinals. Then $A\in L[G_T]$.
\end{proposition}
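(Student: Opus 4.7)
My plan is to exhibit an explicit $L[G_T]$-definition of $A$. Specifically, I aim to establish, for every ordinal $\alpha$, the equivalence
\[\alpha \in A \iff \text{there exists } f_X \in G_T \text{ with } f_X \forces \check\alpha \in \sigma.\]
Once this equivalence is in hand, the fact that the forcing relation $\forces \check\alpha \in \sigma$ is definable in the ground model $L$ (since both the forcing $\p(\vec P^J,\ex{\lt\omega}\omega_1)$ and the name $\sigma$ lie there), together with $G_T \in L[G_T]$, yields $A \in L[G_T]$.

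For the backward direction of the equivalence, I would simply note that $G_T \subseteq G$, so if some $f_X \in G_T$ forces $\check\alpha \in \sigma$ then $\alpha \in \sigma_G = A$. For the forward direction, given $\alpha \in A$, I would choose $f_X \in G$ with $f_X \forces \check\alpha \in \sigma$ and apply Proposition~\ref{prop:countableTreeDecides} (to the formula ``$\check\alpha \in \sigma$'' with parameter $\check\alpha$) to conclude $f_{X \cap T} \forces \check\alpha \in \sigma$. Since $f_{X \cap T}$ is a weakening of $f_X \in G$ it lies in $G$, and since its domain $X \cap T$ is contained in $T$ it lies in $G_T$.

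The only nontrivial ingredient is the appeal to Proposition~\ref{prop:countableTreeDecides}, whose symmetry-based argument has already shown that conditions are powerless to decide statements about $\sigma$ using nodes outside the witnessing tree $T$. Thus the present proposition should reduce to a brief formal consequence of that lemma together with the definition of $G_T$, and I do not anticipate a significant obstacle beyond assembling these ingredients.
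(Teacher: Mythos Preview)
Your proposal is correct and takes essentially the same approach as the paper. Both arguments hinge on Proposition~\ref{prop:countableTreeDecides} to replace an arbitrary forcing condition by its restriction to $T$; the paper packages this into an explicit name $\sigma^*=\{(\xi,g_{Y\cap T})\mid g_Y\leq f_X,\ g_Y\forces\xi\in\sigma\}$ and shows $f_X\forces\sigma=\sigma^*$, whereas you express the same content as the $L[G_T]$-definability statement $\alpha\in A\iff\exists f_X\in G_T\,(f_X\forces\check\alpha\in\sigma)$.
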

\begin{proof}
Let $f_X$ be some condition in $G$ forcing that $\sigma$ is a set of ordinals. Define a name $$\sigma^*=\{(\xi,g_{Y\cap T})\mid g_Y\leq f_X, g_Y\forces \xi\in\sigma\}.$$ We will argue that $f_X\forces\sigma=\sigma^*$. Let $H\subseteq \p(\vec P^J,\ex{\lt\omega}\omega_1)$ be some $L$-generic filter containing $f_X$. Suppose $\xi\in\sigma_H$. Then there is $g_Y\in H$ such that $g_Y\leq f_X$ and $g_Y\forces \xi\in \sigma$, from which it follows that  $(\xi,g_{Y\cap T})\in \sigma^*$ and $g_{Y\cap T}\in H$. So we have $\xi\in \sigma^*_H$. Next, suppose that $\xi\in\sigma^*_H$. Then there is a condition $g_Y\forces \xi\in \sigma$ such that  $(\xi,g_{Y\cap T})\in \sigma^*$ and $g_{Y\cap T}\in H$. But by Proposition~\ref{prop:countableTreeDecides}, it follows that $g_{Y\cap T}\forces\xi\in\sigma$, and so $\xi\in\sigma_H$.
\end{proof}
\begin{lemma}
$\DC$ fails in $N$.
\end{lemma}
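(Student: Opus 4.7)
The plan is to show that no infinite branch through the tree $\T\in N$ lies in $N$; from this the failure of $\DC$ in $N$ follows immediately, since the relation ``$Y$ properly extends $X$'' on the nodes of $\T$ satisfies the $\forall X\exists Y$ hypothesis ($\T$ is isomorphic to $\ex{\lt\omega}\omega_1$) yet no witnessing $\omega$-sequence can exist in $N$---such a sequence would be an infinite branch through $\T$.

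Suppose toward a contradiction that $\dot b\in\HS$ is a hereditarily symmetric name with $b=\dot b_G$ an infinite branch through $\T$, and let $T$ be a useful tree witnessing that $\dot b$ is symmetric. In $L[G]$ the branch $b$ determines a sequence $(s_n)_{n<\omega}$ with $b(n)=x_{s_n}$, forming an infinite branch through $\ex{\lt\omega}\omega_1$. Since $T$ has no infinite branch, there is some $n$ with $s_n\notin T$; by the truth lemma fix $q\in G$ forcing $\dot b(n)=\dot x_{s_n}$.

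The core step is to build a tree automorphism $\pi$ of $\ex{\lt\omega}\omega_1$ satisfying (i) $\pi$ pointwise fixes $T$, (ii) $\pi(s_n)\neq s_n$, and (iii) $\pi\image(\dom(q)\setminus T)\cap\dom(q)=\emptyset$. Because $T$ is countable, $\dom(q)$ is finite, and each node of $\ex{\lt\omega}\omega_1$ has $\omega_1$-many children, such a $\pi$ is assembled by choosing, at each $T$-node $t$, a permutation of the children of $t$ that fixes the $T$-children pointwise and redirects the finitely many ``bad'' children---those in $\dom(q)\setminus T$ together with the child through which the branch to $s_n$ first exits $T$---to fresh indices outside $T\cup\dom(q)$; off $T$ we take $\pi$ to act as the identity on child indices, so that $\pi$ rigidly transports the displaced subtrees. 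Then $\pi^*\in H_T\subseteq\sym(\dot b)$, so $\pi^*(\dot b)=\dot b$, and $\pi^*(\dot x_{s_n})=\dot x_{\pi(s_n)}$; applying $\pi^*$ to $q\forces\dot b(n)=\dot x_{s_n}$ yields $\pi^*(q)\forces\dot b(n)=\dot x_{\pi(s_n)}$.

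By condition (iii) the overlap $\dom(q)\cap\dom(\pi^*(q))$ is contained in $T$, and on this overlap $q$ and $\pi^*(q)$ are literally equal since $\pi$ fixes $T$ pointwise; hence $q$ and $\pi^*(q)$ amalgamate to a common refinement $r$, which would force $\dot x_{s_n}=\dot x_{\pi(s_n)}$. This is impossible because distinct same-level nodes of $\ex{\lt\omega}\omega_1$ carry forced-distinct generic sequences: at the level $k$ where the branches to $s_n$ and $\pi(s_n)$ first split, the $k$-th coordinates of $\dot x_{s_n}$ and $\dot x_{\pi(s_n)}$ are the mutually distinct generic reals attached to distinct sibling nodes of $\ex{\lt\omega}\omega_1$, contradicting $\pi(s_n)\neq s_n$. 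The main technical point I anticipate is the coherence check for the amalgamation of $q$ and $\pi^*(q)$ in the tree iteration; but once condition (iii) reduces the overlap to $T$, where the two conditions coincide exactly, coherence on $\dom(q)\cup\dom(\pi^*(q))$ is inherited separately from the coherence of $q$ and $\pi^*(q)$ on their disjoint portions, as is routinely verified.
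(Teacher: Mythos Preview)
Your argument is correct and follows essentially the same automorphism strategy as the paper: both exhibit a tree automorphism $\pi$ fixing the witnessing useful tree $T$ but moving the node where the putative branch first leaves $T$, and derive a contradiction from $\dot x_s\neq\dot x_{\pi(s)}$. The implementations differ only in bookkeeping: the paper first invokes Proposition~\ref{prop:subsetsOfOrdinalsSymmetricModels} to replace $\dot b$ by a name mentioning only conditions with domain inside $T$, then applies $\pi$ to the generic filter $G$ itself (after a density step copying the value at $s$ to a fresh sibling $t$), whereas you leave $\dot b$ untouched, apply $\pi^*$ to the single condition $q$, and check directly that $q$ and $\pi^*(q)$ amalgamate because their domains overlap only inside $T$; your route avoids the preliminary name-reduction at the cost of the coherence verification you flag, which indeed goes through since $\dom(q)$ is a downward-closed finite tree.
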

\begin{proof}
We will argue that $\mathcal T$ does not have a infinite branch in $N$, and hence $\DC$ fails. Suppose to the contrary that $N$ has an infinite branch $b$ through $\mathcal T$, which we can view via coding as a subset of natural numbers (since it is an $\omega$-length sequence of reals). Fix a symmetric name $\dot b$ for $b$, as witnessed by a useful tree $T$. By Proposition~\ref{prop:subsetsOfOrdinalsSymmetricModels}, we can assume that the name $\dot b$ mentions only conditions with domains contained in $T$. Recall that for a node $s\in \ex{\lt\omega}\omega_1$, $x_s$ is the $L$-generic sequence of reals for $\p_{\text{len}(s)}^J$ added by $G$ on node $s$ and $\dot x_s$ is the canonical $\p(\vec P^J,\ex{\lt\omega}\omega_1)$-name for $x_s$. Let $\bar b$ be the branch through the tree $\ex{\lt\omega}\omega_1$ which corresponds to $b$ via the obvious isomoprhism. Since $T$ doesn't have infinite branches by assumption, $\bar b$ cannot be a branch through $T$. Thus, there is some natural number $n$ such that $\bar b\restrict n\subseteq T$ and $\bar b(n)=s$ is outside $T$. 

Fix a condition $$f_X\forces \dot b(n)=\dot x_s,$$ and assume without loss that $s\in X$. It is easy to see that conditions $g_Y$ having some $t\in Y\setminus (X\union T)$ with $g_Y(t)=f_X(s)$ are dense below $f_X$. So fix a condition $g_Y\in G$ with $X\subseteq Y$ and $t\in Y\setminus (X\union T)$ with $g_Y(t)=f_X(s)$. Let $\pi$ be the automorphism of the tree $\ex{\lt\omega}\omega_1$ which swaps $s$ with $t$. Let $H=\pi\image G$, which is also $L$-generic for $\p(\vec P^J,\ex{\lt\omega}\omega_1)$. Observe that $\dot b_H=b$ since the name $\dot b$ only mentioned conditions with domain in $T$. Observe also that $f_X\in H$ since it is above $\pi(g_Y)=g_{\pi\image Y}$. So it must be the case that $b(n)=(\dot x_s)_H$. But this is impossible because $(\dot x_s)_H=x_t$ and $(\dot x_s)_G=x_s$ and, by genericity, $x_s\neq x_t$.

\end{proof}
The proof of the next lemma relies mainly on the fact that $\p(\vec P^J,\ex{\lt\omega}\omega_1)$ has the ccc and uses very little else about what the conditions in $\p(\vec P^J,\ex{\lt\omega}\omega_1)$ look like. So to make the notation nicer, we will switch away from the previous convention and call conditions in $\p(\vec P^J,\ex{\lt\omega}\omega_1)$ standard names like $p$ and $q$.
\begin{lemma}
$\AC_\omega$ holds in $N$.
\end{lemma}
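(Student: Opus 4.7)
The plan is to take a hereditarily symmetric $\p(\vec P^J,\ex{\lt\omega}\omega_1)$-name $\sigma$ for $\la A_n\mid n<\omega\ra$, witnessed by a useful tree $T$, and build a hereditarily symmetric name $\dot f$ for a choice function; the ccc will cut things down to countably many local name choices, and a preassigned partition of $\omega_1$ into countably many ``private regions'' together with automorphisms in $H_T$ will let me amalgamate their supports into a single useful tree.

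First I would verify that for each $n<\omega$ the set $D_n=\{p\mid\exists\text{ hereditarily symmetric }\tau,\,p\forces\tau\in\sigma(n)\}$ is dense --- $A_n=\sigma^G(n)$ is nonempty in $N$ and every element of $A_n$ has a hereditarily symmetric name. By the ccc extract a countable maximal antichain $\mathcal A_n\subseteq D_n$ and choose hereditarily symmetric $\tau_{n,p}$ with $p\forces\tau_{n,p}\in\sigma(n)$ for each $p\in\mathcal A_n$; then
$$\dot f=\{({\rm op}(\check n,\tau_{n,p}),p)\mid n<\omega,\,p\in\mathcal A_n\}$$
is a name whose interpretation $\dot f^G$ is a function $\omega\to V$ with $\dot f^G(n)\in A_n$, because $G$ meets each $\mathcal A_n$ at a unique $p_n$.

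The real work is then checking that $\dot f$ is hereditarily symmetric. Since the index set $\{(n,p):n<\omega,\,p\in\mathcal A_n\}$ is countable by ccc, I would fix at the outset a partition of the level-$1$ nodes of $\ex{\lt\omega}\omega_1$ lying outside $T$ into countably many disjoint uncountable blocks, assigning each $(n,p)$ its own block with associated subtree-region $R_{n,p}$ consisting of that block together with all descendants. Automorphisms in $H_{T\cup\dom(p)}$ fix both $\sigma$ and $p$ pointwise while acting freely on nodes off $T\cup\dom(p)$, so I would refine $D_n$ to the still-dense subset of conditions $p$ forcing some hereditarily symmetric $\tau$ whose useful witness-tree lies in $T\cup\dom(p)\cup R_{n,p}$, and use a parallel refinement to arrange $\dom(p)\setminus T\subseteq R_{n,p}$ for each $p\in\mathcal A_n$. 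The witness tree $T^*=T\cup\bigcup_{n,p}(T_{n,p}\cup\dom(p))$ for $\dot f$ is then countable, and any hypothetical infinite branch would have to lie in $T$ (impossible, $T$ being useful) or inside a single $R_{n,p}$ (impossible, as $T_{n,p}$ is useful), the pairwise disjointness of the $R_{n,p}$ ruling out jumps between regions; thus $T^*$ is useful, $H_{T^*}\subseteq\sym(\dot f)$, and $\dot f^G\in N$. The main obstacle I expect is precisely the support-bookkeeping in this step: maintaining maximality of $\mathcal A_n$ in the full forcing while simultaneously pushing condition-domains and name-witnesses into their preassigned regions via $H_T$-translates. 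The only properties of $\p(\vec P^J,\ex{\lt\omega}\omega_1)$ the argument should use are its ccc and the richness of the automorphism group $H_T$, matching the paper's remark that little beyond the ccc is needed.
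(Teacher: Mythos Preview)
Your overall strategy is correct and matches the paper's: reduce via the ccc to countably many hereditarily symmetric local names, mix them over maximal antichains to obtain names $\dot C_n$ forced by $q$ to lie in $\dot F(n)$, and use automorphisms in $H_T$ to control the witness trees so that their union is still useful. You are also right that essentially only the ccc and the richness of $\mathcal G$ are used.

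The divergence is in the bookkeeping, and the obstacle you flag is a genuine gap in your plan. The paper does \emph{not} preassign disjoint regions $R_{n,p}$. Instead it grows a single tree dynamically: having built $S_\xi\supseteq S$, it chooses the next condition $p'$ (incompatible with all earlier ones) together with a witness tree $S'$, and applies some $\pi\in H_S$ moving every node of $S'$ that sits above a node of $S_\xi\setminus S$ to a fresh position outside $S_\xi$. Thus no branch beyond $S$ is ever extended twice, so the final union is well-founded. The cost is that $\pi^*(p')$ may lose incompatibility with some earlier $p_\eta$; the paper restores it by adjoining to $\pi^*(p')$ a single node from $S_\xi$ where the incompatibility was witnessed. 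That extra node already lies in $S_\xi$, so no new branch growth is introduced.

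Your disjoint-regions scheme cannot absorb this fix as written: the restored incompatibility node would live in some earlier region $R_{n',p'}$, violating $\dom(p)\setminus T\subseteq R_{n,p}$, and you would again have to worry about branches crossing regions through accumulated $\dom(p)$'s. One way to salvage your organization is to use Proposition~\ref{prop:countableTreeDecides}: since $H_T\subseteq\sym(\sigma)$, whenever $p\forces\tau\in\sigma(n)$ one already has $p\restriction T\forces\tau\in\sigma(n)$, and a maximal antichain in the sub-poset $\{f_X:X\subseteq T\}$ below $q$ is automatically maximal in the full poset below $q$. You can therefore take every $\mathcal A_n$ to consist of conditions with domain contained in $T$; then only the witness trees $T_{n,p}$ need to be pushed into their regions, and since any $\pi\in H_T$ fixes both $\sigma$ and each $p\in\mathcal A_n$, this can be done freely without disturbing the antichain. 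With that modification your argument goes through; without it, the plan as stated does not close.
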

\begin{proof}
Suppose $F\in N$ is a countable family of non-empty sets. Let $\dot F$ be a hereditarily symmetric name for $F$ with the useful tree $S$ witnessing that $\dot F$ is symmetric, and let $q\in G$ force that $\dot F$ is a countable family of non-empty sets. We would like to build a name $\dot C\in \HS$ such that $q$ forces that $\dot C$ is a choice function for $\dot F$. We will adapt the following strategy. First, we will build a mixed name $\dot C_0\in \HS$ (over an antichain below $q$) and a useful tree $T_0$ extending $S$, witnessing that $\dot C_0$ is symmetric, such that $q\forces \dot C_0\in \dot F(0)$. Next, we will build a mixed name $\dot C_1\in\HS$ and a useful tree $T_1$ extending $T_0$, witnessing that $\dot C_1$ is symmetric, such that $q\forces \dot C_1\in\dot F(1)$. Proceeding in this fashion, we will build names $\dot C_n\in\HS$ and an increasing sequence of useful trees $T_n$ such that $q\forces \dot C_n\in \dot F(n)$. Provided we can ensure in the course of the construction that $T=\Union_{n\in\omega}T_n$ does not have an infinite branch, we will be able to build from the names $\dot C_n$ a hereditarily symmetric name $\dot C$, witnessed by $T$ to be symmetric, that is forced by $q$ to be a choice function for $\dot F$.

Let $D_0$ be the dense set below $q$ of conditions $p$ such that for some name $\dot c_p\in \HS$, $p\forces\dot c_p\in\dot F(0)$. We will thin out $D_0$ to a maximal antichain over which we can mix the names $\dot c_p$ to get the desired name $\dot C_0\in\HS$. At the beginning, we choose some condition $p_0\in D_0$ and a name $\dot c_{p_0}\in\HS$, witnessed by a useful tree $S_0'$ to be symmetric, such that $p_0\forces \dot c_{p_0}\in \dot F(0)$. By extending $S_0'$, if necessary, we can assume that $\dom(p_0)\subseteq S_0'$. Let $S_0$ be the union of $S$ and $S_0'$. Next, we choose some $p_1'\in D_0$, incompatible to $p_0$, and a name $\dot c_{p_1'}\in \HS$, witnessed by a useful tree $S_1'$, with $\dom (p_1')\subseteq S_1'$, to be symmetric, such that $p_1'\forces \dot c_{p_1'}\in \dot F(0)$. At this point, it is not a good idea to union up $S_0$ and $S'_1$ because if we keep doing this we might end up with an infinite branch in the final tree. So instead, we first consider those nodes in $S_0\setminus S$ which have some new node of $S_1'$ sitting directly above them, meaning that we are extending an old branch that was already extended once from $S_0$. Call these nodes in $S_0\setminus S$ \emph{bad} and the nodes of $S_1'$ sitting directly above them \emph{bad successors}. Let $\pi$ be an automorphism of the tree $\ex{\lt\omega}\omega_1$ which moves every bad node $t$ (and its bad successors along with it) to some node outside $S_0$. Since $S_0$ is countable and each node in $\ex{\lt\omega}\omega_1$ has $\omega_1$-many successors, there are many nodes to which the nodes $t$ can move. Since $\pi$ fixes $S$, we have $\pi^*\in H_S$. So we have $\pi^*(p_1')\forces \pi^*(\dot c_{p_1'})\in \dot F(0)$. We can safely union up $S_0$ and $\pi\image S_1'$ without growing any branches in $S_0$ that have extended beyond $S$. But the problem now is that $\pi^*(p_1')$ may no longer be incompatible with $p_0$, since the incompatibility could have come from some node $t$ that got moved. This, however, can be easily remedied by adding back to $\pi^*(p_1')$ one of the nodes from $S_0$ which caused the incompatibility. Let $p_1$ be the condition $\pi^*(p_1')$ together with possibly one extra node from $S_1'\cap S_0$, let $\dot c_{p_1}=\pi^*(\dot c_{p_1'})$, and let $S_1$ be the union of $S_0$ and $\pi\image S_1'$. Using that the filter $\mathscr F$ is normal, it is not difficult to see that the image of a name $\sigma$ in $\HS$ under an automorphism $\pi^*$ from $\mathcal G$ is also a name in $\HS$, and moreover if some useful tree $T$ witnesses that $\sigma$ is symmetric, then $\pi\image T$ witnesses that $\pi^*(\sigma)$ is symmetric. So $\dot c_{p_1}\in \HS$ and $S_1$ witnesses that it is symmetric. Finally, note that $\dom (p_1)\subseteq S_1$. In the next step, we choose some condition in $D_0$ incompatible with both $p_0$ and $p_1$ together with some name in $\HS$ it forces to be in $\dot F(0)$. We use an automorphism from $H_S$ to transform the witnessing tree into a tree that can be unioned up with $S_1$ without growing any branches by moving over nodes (outside $S$) that have new nodes sitting directly above them. We continue this process transfinitely for as long as we can find a condition in $D_0$ that is incompatible to all previously chosen conditions. At limit stages, we simply take the union of the increasing sequence of trees constructed up to that stage. The process must terminate by some countable ordinal stage $\alpha$ since $\p(\vec P^J,\ex{\lt\omega}\omega_1)$ has the ccc. Thus, the final union tree $T_0=\Union_{\xi<\alpha}S_\xi$ must be countable and by construction, since we never allow any branch to grow more than once, $T_0$ cannot have an infinite branch.

Let's argue that the tree $T_0$, because it contains $\Union_{\xi<\alpha}\dom(p_\xi)$, witnesses that the mixed name $\dot C_0$ of the names $\dot c_{p_\xi}$ for $\xi<\alpha$ over the antichain $\la p_\xi\mid \xi<\alpha\ra$ is symmetric. Recall that $$\dot C_0=\Union_{\xi<\alpha}\{( \tau,r)\mid r\leq p_\xi,\,r\forces \tau\in \dot c_{p_\xi},\tau\in\text{\dom}(\dot c_{p_\xi})\}.$$ Fix an automorphism $\pi$ point-wise fixing $T_0$. It suffices to argue that whenever $(\tau,r)\in \dot C_0$, then $(\pi^*(\tau),\pi^*(r))\in \dot C_0$. So suppose $(\tau,r)\in\dot C_0$ and fix $p_\xi$ witnessing this. Since $r\leq p_\xi$, it follows that $\pi^*(r)\leq p_\xi$; since $r\forces \tau\in \dot c_{p_\xi}$, it follows that $\pi^*(r)\forces\pi^*(\tau)\in \dot c_{p_\xi}$; and finally, since $\dot c_{p_\xi}$ is symmetric and $\tau\in\text{dom}(\dot c_{p_\xi})$, it follows that $\pi^*(\tau)\in\text{dom}(\dot c_{p_\xi})$. Now observe that, since each $\dot c_{p_\xi}\in \HS$, we have $\dot C_0\in \HS$.

Next, we let $D_1$ be the dense set below $q$ of conditions $p$ such that for some name $\dot c_p\in \HS$, $p\forces\dot c_p\in\dot F(1)$. We thin out $D_1$ to a maximal antichain over which we can mix to obtain the desired symmetric name $\dot C_1$ by the same process as above building the new trees as extensions of $T_0$, where branches that have extended beyond $S$ are no longer allowed to grow. Once, $\dot C_1$ and $T_1$ have been obtained, we proceed to obtain $\dot C_2$ and $T_2$ extending $T_1$, and continue in this manner for $\omega$-many steps. The final tree $T=\Union_{n<\omega}T_n$ is still countable, cannot have an infinite branch by construction, and witnesses that the canonical name for the sequence of the $\dot C_n$ is symmetric.
\end{proof}
\section{A second-order arithmetic model of $\Sigma^1_{\infty}$-$\AC$+$\neg\Pi^1_2$-$\DC$}\label{sec:SOArithmetic}
We will now argue that the reals of the symmetric submodel $N$ of the forcing extension $L[G]$ by $\p(\vec P^J,\ex{\lt\omega}\omega)$, constructed in Section~\ref{sec:symmetricModels}, is a  second-order arithmetic model of $\Z_2$ together with $\Sigma^1_\infty$-$\AC$ in which $\Pi^1_2$-$\DC$ fails.
\begin{theorem}\label{th:SOArithmeticModelACnegDC}
There is a $\beta$-model of second-order arithmetic $\Z_2$ together with $\Sigma^1_\infty$-$\AC$ in which $\Pi^1_2$-$\DC$ fails.
\end{theorem}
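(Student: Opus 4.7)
The plan is to verify that the reals $\mathcal R = N \cap \mathcal P(\omega)$ of the symmetric submodel $N \subseteq L[G]$ constructed in Section~\ref{sec:symmetricModels} form the desired $\beta$-model. First I would dispose of the easy parts: since $N$ is transitive and $N \models \ZF$, its reals automatically form a $\beta$-model of $\Z_2$, because transitivity yields absoluteness of well-foundedness on $\omega$. The lemma of Section~\ref{sec:symmetricModels} giving $N \models \AC_\omega$, combined with the folklore fact cited in the introduction that the reals of any model of $\ZF+\AC_\omega$ satisfy $\Z_2+\Sigma^1_\infty$-$\AC$, then yields the full choice scheme in $\mathcal R$.

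The substantive work is to exhibit a parameter-free failure of $\Pi^1_2$-$\DC$. The key object is the tree $\mathcal T \in N$ of generic sequences added by $G$. By Theorem~\ref{th:uniquenessOfGenericsTreeIterationUncountable}, in $L[G]$---and therefore in $N$---the level-$n$ nodes of $\mathcal T$ are exactly the $n$-length sequences of reals that are $L$-generic for $\p_n^J$. Since ``$X$ meets every $L$-maximal antichain of $\p_n^J$'' is a parameter-free $\Pi^1_2$ statement, membership in $\mathcal T$ is $\Pi^1_2$-definable in $\mathcal R$. The lemma in Section~\ref{sec:symmetricModels} further shows that $\mathcal T$ has no infinite branch in $N$.

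For the failing DC instance I would take a $\Pi^1_2$ formula $\varphi(X,Y)$ expressing, roughly, that $Y$ is a level-$(\mathrm{len}(X)+1)$ node of $\mathcal T$ which extends $X$ whenever $X$ itself is a node of $\mathcal T$. The hypothesis $\forall X\exists Y\,\varphi(X,Y)$ then holds in $\mathcal R$: when $X = x_s$ is a node of $\mathcal T$ on level $n$, every successor $s\concat\alpha$ in $\ex{\lt\omega}\omega_1$ yields a witness $Y = x_{s\concat\alpha}$ whose canonical hereditarily symmetric name places it in $N$; when $X$ is off the tree, the alternative clause of $\varphi$ is satisfied vacuously by any level-$(\mathrm{len}(X)+1)$ node of $\mathcal T$. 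On the other hand, any purported $\omega$-sequence $Z$ with $\varphi(Z_n,Z_{n+1})$ for all $n$ must land on $\mathcal T$ from some index $m\geq 1$ onward (since each $Z_{n+1}$ is forced to be a $\mathcal T$-node), and the coherence clause of $\varphi$ then forces the tail $(Z_m,Z_{m+1},\ldots)$ to produce an infinite branch through $\mathcal T$ inside $N$, contradicting the no-branch lemma.

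The principal technical obstacle is certifying that the chosen $\varphi$ is genuinely a prenex $\Pi^1_2$ formula. A direct transcription of the conditional ``$X\in\mathcal T \Rightarrow Y \supseteq X$'' has the shape $\Pi^1_2 \to \Pi^1_2$, which \emph{a priori} sits only in $\Sigma^1_3$. The fix is to fold the conditional into a single universal quantifier over $L$-antichains of $\p_n^J$, exploiting the explicit coding of such countable $L$-sets by reals coding countable $L$-ordinals---a coding absolute in the $\beta$-model $\mathcal R$---so as to collapse the quantifier complexity, while preserving both the $\forall X\exists Y$ hypothesis and the branch-forcing behavior of the DC conclusion. Carrying out this rearrangement precisely is the delicate heart of the argument.
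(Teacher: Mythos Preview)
Your approach is essentially the paper's: take the reals of the symmetric model $N$, get $\Sigma^1_\infty$-$\AC$ from $\AC_\omega$ in $N$, and use the $\Pi^1_2$-definability of $\mathcal T$ together with the no-branch lemma to defeat $\Pi^1_2$-$\DC$. The paper's $\Pi^1_2$ definition of ``$\vec r$ is $L$-generic for $\p_n^J$'' is phrased slightly differently---it quantifies over codes for limit ordinals $\alpha$ and asserts the existence of a code for $L_\alpha$ verifying genericity at every nontrivial stage below $\alpha$---but this is equivalent to your ``meets every $L$-maximal antichain'' formulation.

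One remark on the obstacle you isolate. The conditional you worry about is really $\Pi^1_2\to(\text{arithmetic})$, not $\Pi^1_2\to\Pi^1_2$, since ``$Y\supseteq X$'' is arithmetic; the problematic conjunction is therefore $\Pi^1_2\wedge\Sigma^1_2$. You are right that this is a genuine point, and the paper does not spell out the passage from ``$\mathcal T$ is $\Pi^1_2$-definable with no branch'' to an explicit $\Pi^1_2$ instance of $\DC$---it simply asserts that the former suffices. Your proposed resolution (absorbing the conditional into the single universal quantifier over codes of $L$-ordinals, so that the matrix stays $\Sigma^1_1$) is the natural move; the paper's form of the definition, $\forall W\,[\Pi^1_1(W)\to\Sigma^1_1(W,\vec r)]$, is already set up to accommodate exactly this kind of merging.
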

\begin{proof}
Let $G\subseteq \p(\vec P^J,\ex{\lt\omega}\omega_1)$ be $L$-generic and let $N\subseteq L[G]$ be the symmetric submodel of $\ZF+\AC_\omega+\neg \DC$ constructed in Section~\ref{sec:symmetricModels}. Let $\M$ be the model of $\Z_2$ whose collection of sets is $P(\omega)^N$. Since $\AC_\omega$ holds in $N$, it immediately follows that $\Sigma^1_{\infty}$-$\AC$ holds in $\M$. Thus, it remains to show that $\Pi^1_2$-$\DC$ fails in $\M$. The result will follow if we can show that the tree $\T$ is $\Pi^1_2$-definable in $\M$. By Theorem~\ref{th:uniquenessOfGenericsTreeIterationUncountable}, a sequence $\la r_0,\ldots,r_{n-1}\ra$ of reals is in the domain of $\T$ if and only if it is $L$-generic for $\p_n^J$. So we need to see that the statement $\la r_0,\ldots,r_{n-1}\ra$ is $L$-generic for $\p_n^J$ is $\Pi^1_2$-expressible in $\M$. This will suffice because the sequences in $\T$ are ordered simply by extension.

What we would like to say is that all $L_\alpha$, with $\alpha$ a limit ordinal, should satisfy the following. Given an ordinal $\beta$ we check whether stage $\beta$ was nontrivial in the construction of $\vec P^J=\la \p_n^J\mid n<\omega\ra$. In this case, we check whether $\la r_0,\ldots,r_{n-1}\ra$ generates an $M_\beta$-generic filter for $\p_n^\beta$. Let $H_0$ consist of all trees in $\p_1^\beta$ which have the branch $r_0$. If $H_0$ is not $M_\beta$-generic, we are done. Otherwise, let $H_1$ consist of all conditions $p\in\p_2$ such that $p(0)\in H_0$ and $r_1$ is a branch through $p(1)_{H_0}$. If $H_1$ is not $M_\beta$-generic, we are done. Otherwise, we continue. So $\la r_0,\ldots,r_{n-1}\ra$ is $\p_n^J$-generic if whenever $\alpha$ is a limit ordinal, then $L_\alpha$ satisfies that for every nontrivial stage $\beta$ in the construction of $\p_n^J$, $\la r_0,\ldots,r_{n-1}\ra$ generates an $M_\beta$-generic filter. Now it is not difficult to check that the complexity of this statement is $\Pi^1_2$, because it says that for every set $X$, if $X$ codes a limit ordinal $\alpha$, then there is another set $Y$ coding $L_\alpha$ and a set coding the filters $H^\beta$ needed to verify for every nontrivial stage $\beta\in L_\alpha$ that $H^\beta$ is $M_\beta$-generic. The statement that $X$ codes an ordinal is $\Pi^1_1$ and the remaining statements are $\Sigma^1_1$.
\end{proof}
\section{Reflection can fail in models of $\ZFC$ without powerset}\label{sec:reflectionPrinciple}
Let's call the \emph{Reflection Principle} the statement that for every set $a$ and formula $\varphi(\bar x,a)$, there is a transitive set model $M$ containing $a$ such that $\varphi(\bar x,a)$ is absolute between $M$ and the universe. By the \Levy-Montague reflection, $\ZFC$ implies the Reflection Principle, namely every formula is reflected by some $V_\alpha$. It is natural to wonder whether the Reflection Principle holds in models of $\ZFC^-$, which may not have the $V_\alpha$-hierarchy.\footnote{In the absence of powerset, the axiom of choice, defined as the existence of choice functions, is not equivalent the assertion that every set can be well-ordered \cite{zarach:unions_of_zfminus_models}. Here the theory $\ZFC^-$ is assumed to include the assertion that every set can be well-ordered.} It has been suspected for a long time that the Reflection Principle can fail in such models. The question was first asked by Zarach in \cite{Zarach1996:ReplacmentDoesNotImplyCollection}, and considered again in \cite{zfcminus:gitmanhamkinsjohnstone}. We will show that the Reflection Principle fails in the model $H_{\omega_1}^N$, where $N$ is the symmetric submodel of the forcing extension $L[G]$ by $\p(\vec P^J,\ex{\lt\omega}\omega_1)$ that we constructed in Section~\ref{sec:symmetricModels}.

The argument requires first seeing that the Reflection Principle is equivalent to a version of $\DC$ for definable relations. Following \cite{zfcminus:gitmanhamkinsjohnstone}, let's define that the \emph{Dependent Choice Scheme}, abbreviated $\DC$-scheme, asserts for every formula $\varphi(x,y,z)$ and parameter $a$ that if for every $x$, there is $y$ such that $\varphi(x,y,a)$ holds, then there is an $\omega$-sequence $\la x_n\mid n<\omega\ra$ such that for all $n$, $\varphi(x_n,x_{n+1},a)$ holds; in other words, if a definable relation has no terminal nodes, then we can make $\omega$-many dependent choices according to it.
\begin{lemma}[\cite{zfcminus:gitmanhamkinsjohnstone}]\label{lem:RSequivDCScheme}
The Reflection Principle is equivalent over $\ZFC^-$ to the $\DC$-scheme.\footnote{This equivalence holds regardless of whether we include the existence of choice functions or well-orderings as the choice axiom in our theory.}
\end{lemma}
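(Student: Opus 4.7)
The plan is to prove the two implications separately, with nearly all of the work in the direction that the $\DC$-scheme implies Reflection. For the direction Reflection $\Rightarrow$ $\DC$-scheme, suppose $\forall x\exists y\,\varphi(x,y,a)$. Apply Reflection to the formula $\varphi(x,y,a)$ (together with its universal-existential closure) to obtain a transitive set $M$ with $a\in M$ inside which $\forall x\exists y\,\varphi(x,y,a)$ still holds by absoluteness. The relation $\{(x,y)\in M\times M:\varphi(x,y,a)\}$ is then a set-sized relation without terminal nodes, so ordinary set-dependent choice, which is a theorem of $\ZFC^-$ via a well-ordering of $M$, yields the required $\omega$-sequence.

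For the harder direction, fix a formula $\varphi(\bar x,a)$ and enumerate its subformulas of the form $\exists y\,\chi_i(y,\bar z)$ as $\psi_1,\ldots,\psi_k$. Define a formula $\Phi(X,Y)$ asserting that $X\cup\mathrm{tc}(\{a\})\subseteq Y$, that $Y$ is transitive, and that for every $i\le k$ and every tuple $\bar z$ from $X$ with $\exists y\,\chi_i(y,\bar z)$ holding in $V$, some such witness lies in $Y$. First I will verify that $\ZFC^-$ proves $\forall X\exists Y\,\Phi(X,Y)$: for each $i$ separately, apply Collection to the totalized relation ``$y$ witnesses $\chi_i$ at $\bar z$, or no witness exists,'' with $\bar z$ ranging over tuples from $X$, to obtain a witness set $Y_i$; then take the transitive closure of $X\cup\mathrm{tc}(\{a\})\cup\bigcup_{i\le k}Y_i$. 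Now apply the $\DC$-scheme to $\Phi$ (with $a$ as parameter) to extract a sequence $\langle X_n\mid n<\omega\rangle$ with $\Phi(X_n,X_{n+1})$ for all $n$, and set $M=\bigcup_{n<\omega}X_n$, which is transitive and contains $a$ (since $a\in X_1$).

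To finish, I will verify the Tarski–Vaught criterion by induction on subformula complexity: for each existential subformula $\exists y\,\chi_i(y,\bar z)$ and each $\bar z\in M$, pick the least $n$ with $\bar z\in X_n$; then the clause $\Phi(X_n,X_{n+1})$ delivers a witness in $X_{n+1}\subseteq M$ whenever any exists in $V$, while the inductive hypothesis supplies absoluteness of $\chi_i$ between $M$ and $V$. Consequently $\varphi$ is absolute between $M$ and $V$, as required. The main obstacle is the verification that $\forall X\exists Y\,\Phi(X,Y)$ is provable in $\ZFC^-$: one must carefully apply Collection separately to each of the finitely many existential subformulas and close under transitive closure, taking care not to invoke Reflection in disguise (in particular, to pass from witnesses for each $\bar z$ to a single witness set, one uses the version of Collection that comes built into $\ZFC^-$).
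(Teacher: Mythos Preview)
Your proposal is correct and follows essentially the same approach as the paper: for Reflection $\Rightarrow$ $\DC$-scheme, reflect to a transitive set and use a well-ordering to run set-sized dependent choice; for $\DC$-scheme $\Rightarrow$ Reflection, use Collection to define a ``close under existential witnesses and take transitive closure'' step, apply the $\DC$-scheme to iterate it, and take the union. Your write-up is in fact more careful than the paper's in two places---you explicitly reflect the $\forall x\exists y$ closure (needed so the reflected relation has no terminal nodes), and you spell out the Tarski--Vaught verification and the use of Collection---but the architecture is identical.
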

\begin{proof}
First, let's assume that the Reflection Principle holds. Fix a relation $\varphi(x,y,a)$ without terminal nodes. Let $M$ be a transitive set which reflects $\varphi(x,y,a)$ and let $R$ be the set relation on $M$ derived from $\varphi(x,y,a)$. Now fix a well-ordering $W$ of $M$ and use it to define an $\omega$-sequence of choices according to $R$.

Next, suppose that the $\DC$-scheme holds. Fix a formula $\varphi(\bar x,a)$. Observe that given any set $A$, using collection, we can argue that there is a set $\bar A\supseteq A$ which is closed under existential witnesses for subformulas of $\varphi(\bar x, a)$ with parameters from $A$. By taking the transitive closure we can assume that $\bar A$ is transitive. So consider the definable relation $R$ which says that $A$ is related to $\bar A$, whenever $A\subseteq\bar A$, $\bar A$ is transitive, and $\bar A$ is closed under existential witnesses for subformulas of $\varphi(\bar x,a)$ with parameters from $A$. We just argued that $R$ has no terminal nodes. Thus, we can make a sequence $\la M_n\mid n<\omega\ra$ of dependent choices in $R$. But then clearly $M=\Union_{n<\omega}M_n$ reflects $\varphi(\bar x,a)$.
\end{proof}
\begin{theorem}
The theory $\ZFC^-$ does not imply the Reflection Principle.
\end{theorem}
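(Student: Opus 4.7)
The plan is to use $H_{\omega_1}^N$ as the sought model of $\ZFC^-$ in which the Reflection Principle fails, where $N$ is the symmetric submodel of $L[G]$ by $\p(\vec P^J,\ex{\lt\omega}\omega_1)$ constructed in Section~\ref{sec:symmetricModels}. By Lemma~\ref{lem:RSequivDCScheme} it suffices to exhibit a definable relation in $H_{\omega_1}^N$ witnessing the failure of the $\DC$-scheme.

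First, I would verify that $H_{\omega_1}^N\models\ZFC^-$. Since $N\models\ZF+\AC_\omega$ and every element of $H_{\omega_1}^N$ is hereditarily countable in $N$, countable choice produces an enumeration and hence a well-ordering of each such set, so the choice axiom holds. The remaining axioms of $\ZFC^-$ (pairing, union, infinity, foundation, separation, extensionality) are routine, and the Collection scheme follows from the standard fact that in any $\ZF+\AC_\omega$-model, $H_{\omega_1}$ satisfies Collection because $\omega$-indexed families of non-empty sets in $H_{\omega_1}$ admit choice functions via $\AC_\omega$.

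Next, I would exhibit the failing $\DC$-scheme instance. By the proof of Theorem~\ref{th:SOArithmeticModelACnegDC}, the predicate ``$r$ is an $L$-generic $n$-length sequence for $\p_n^J$'' is $\Pi^1_2$-expressible in $\M=P(\omega)^N$, and hence definable in $H_{\omega_1}^N$ via the bi-interpretability between $\Z_2+\Sigma^1_\infty$-$\AC$ and $\ZFC^-$ plus ``every set is countable.'' Consider the formula $\varphi(x,y)$ asserting: \emph{$y$ is an $L$-generic sequence of reals for some $\p_k^J$, and if $x$ is an $L$-generic $m$-length sequence for $\p_m^J$, then $y$ has length $m+1$ and extends $x$.} For every $x\in H_{\omega_1}^N$ a witness $y$ exists: if $x$ is the generic sequence $x_s$ for some $s$ on level $m$ of $\ex{\lt\omega}\omega_1$, take $y=x_{s\concat\la 0\ra}$; otherwise take any fixed $L$-generic $1$-length sequence such as $x_{\la 0\ra}$. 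Conversely, any $\omega$-sequence $\la x_n\mid n<\omega\ra$ witnessing the $\DC$-scheme conclusion forces $x_n$ to be $L$-generic for all $n\geq 1$ with $x_{n+1}$ extending $x_n$ by one real, thereby yielding an infinite branch through the tree $\mathcal T$ of Section~\ref{sec:symmetricModels}. But it was shown there that $\mathcal T$ has no infinite branch in $N$.

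The principal technical obstacle will be checking that the $\Pi^1_2$ definition of ``being $L$-generic for $\p_n^J$,'' as spelled out in the proof of Theorem~\ref{th:SOArithmeticModelACnegDC}, does indeed translate to a bona fide first-order formula over $H_{\omega_1}^N$ without parameters lying outside that set. Since the definition quantifies universally over countable limit ordinals $\alpha$ and the countable structures $L_\alpha$, all of which are elements of $H_{\omega_1}^N$, the translation is routine modulo verifying that the bi-interpretability between the two theories preserves the relevant logical complexity. Once this is in hand, Lemma~\ref{lem:RSequivDCScheme} delivers the failure of the Reflection Principle in $H_{\omega_1}^N$, proving the theorem.
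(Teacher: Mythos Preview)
Your proposal is correct and follows essentially the same approach as the paper: take $H_{\omega_1}^N$, observe it satisfies $\ZFC^-$ (using $\AC_\omega$ in $N$), note that the tree $\T$ is definable there via the $\Pi^1_2$ description from Theorem~\ref{th:SOArithmeticModelACnegDC}, conclude that the $\DC$-scheme fails, and apply Lemma~\ref{lem:RSequivDCScheme}. You spell out the witnessing definable relation and the verification of $\ZFC^-$ in more detail than the paper does, but the underlying argument is the same.
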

\begin{proof}
Let $M=H_{\omega_1}^N$, where $N$ is the $\ZF$-model constructed in Section~\ref{sec:symmetricModels}. Clearly $M\models\ZF^-$ and since $N\models\AC_{\omega}$, choice holds in $M$, so $M\models\ZFC^-$. Since $\T$ is definable in $M$ (by the argument given in the proof of Theorem~\ref{th:SOArithmeticModelACnegDC}), it follows that the $\DC$-scheme fails in $M$. Thus, by Lemma~\ref{lem:RSequivDCScheme}, the Reflection Principle fails as well.
\end{proof}
\section{Open Questions}
We end with some open questions related to the results of this article. 

To produce our $\beta$-model of $\Z_2+\Sigma^1_\infty$-$\AC$ in which $\Pi^1_2$-$\DC$ fails we used the consistency of (a fragment of) $\ZFC$. 
\begin{question}
Does the consistency of $\Z_2 + \Sigma^1_\infty\text{-}\AC + \neg\Pi^1_2\text{-}\DC$ follow just from the consistency of $\Z_2$? Does this implication hold with "consistency" replaced by "existence of a $\beta$-model"?
\end{question}
The third author showed in \cite{kanovei:ACnotDC} that it is consistent to have a model of $\Z_2+\Sigma^1_\infty$-$\AC+\Pi^1_n$-$\DC$ in which $\Pi^1_{n+1}$-$\DC$ fails. The construction used countable-support iterations of Jensen's forcing.
\begin{question}
Can we obtain a new proof of the result using techniques of this article involving finite iterations? 
\end{question}
Finally, we showed that the Reflection Principle can fail in a model of $\ZFC^-$ in which every set is hereditarily countable. 
\begin{question}\label{ques:KM}
Can the Reflection Principle fail in a model of $\ZFC^-$ with a largest cardinal $\kappa$ that is inaccessible in the model?
\end{question}
The theory $\ZFC^-$ together with the assertion that there is a largest cardinal $\kappa$ that is inaccessible is bi-interpretable with the second-order set theory Kelley-Morse together with an appropriate version of the choice scheme. Kelley-Morse is the set-theoretic analogue of $\Z_2$ in that it contains the comprehension scheme for all second-order assertions. The strategy for answering Question~\ref{ques:KM} involves defining a version of Jensen's forcing for an inaccessible cardinal $\kappa$ and carrying out the rest of the construction to produce a symmetric model $N$ such that $(V_\kappa^N,V_{\kappa+1}^N)$ is a model of Kelley-Morse together with the choice scheme in which the dependent choice scheme fails. We will undertake this project in an upcoming article.
\bibliography{ACnotDC}
\bibliographystyle{alpha}
\end{document}